\numberwithin{equation}{section}
\numberwithin{equation}{section} \textwidth=140mm \textheight=200mm
\newcommand{\VVV}[1]{\left(\!\!\begin{array}{c}#1\end{array}\!\!\right)}
\newtheorem{theorem}{Theorem}[section]
\newtheorem{lemma}[theorem]{Lemma}
\newtheorem{corollary}[theorem]{Corollary}
\newtheorem{definition}[theorem]{Definition}
\newtheorem{proposition}[theorem]{Proposition}
\newtheorem{remark}[theorem]{Remark}
\begin{document}

\title[Threshold of  discrete Schr\"odinger operators]
{Threshold of  discrete Schr\"odinger operators with delta potentials
on $n$-dimensional lattice}

\author{Fumio Hiroshima\textsuperscript{1},
Zahriddin Muminov\textsuperscript{2}, Utkir
Kuljanov\textsuperscript{3}}

\address{\textsuperscript{1}Faculty of Mathematics, Kyushu University, Fukuoka, 819-0395,   Japan}
\email{hiroshima@math.kyushu-u.ac.jp}
\address{\textsuperscript{2}Faculty of Science and Technology, Nilai University, 71800, Nilai, Malaysia}
\thanks{CONTACT Z.~Muminov.
 Email: zimuminov@gmail.com}
 \email{zimuminov@gmail.com}
\address{\textsuperscript{3}Faculty of Mathematics, Samarkand State University,  703004,
Samarkand, Uzbekistan} \email{utkir\_nq83@mail.ru}

\subjclass{Primary: 81Q10, Secondary: 39A12, 47A10, 47N50 }

\keywords{Discrete Shcr\"odinger operator, super-threshold
resonance, threshold resonance, threshold eigenvalue, Fredholm
determinant}

\begin{abstract}
Eigenvalue behaviors of Schr\"odinger operator defined on
$n$-dimensional lattice with $n+1$ delta potentials is studied. It
can be shown that lower threshold eigenvalue and lower threshold
resonance are appeared for $n\geq 2$, {and lower super-threshold
resonance appeared for $n=1$.}
\end{abstract}

 \maketitle

\section{Introduction}
Behavior of eigenvalues below the essential spectrum of standard
Schr\"odinger operators of the form $-\Delta +\varepsilon  V$
defined on $L^2({\mathbb R}^n)$
 is considerably studied so far. Here $V$ is a negative potential and $\varepsilon  \geq 0$ is a parameter which is varied.
When $\varepsilon $ approaches to some critical point $\varepsilon
_c\geq 0$, each negative eigenvalues approaches to the left edge of
the essential spectrum, and consequently they are absorbed into it.
A mathematical crucial problem is to specify whether a negative
eigenvalue  survives as an eigenvalue or a threshold resonance on
the edge of the essential spectrum at the critical point
$\varepsilon _c$. Their behaviors depend on the spacial dimension
$n$. Suppose that $V$ is relatively compact with respect to
$-\Delta$. Then the essential spectrum of $-\Delta+\varepsilon  V$
is $[0,\infty)$. Roughly speaking $-\Delta f+\varepsilon _c V f=0$
implies that $f=-\varepsilon _c (-\Delta)^{-1} Vf$ and
$$\|(-\Delta)^{-1} g\|_{L^2}^2=\int_{{\mathbb R}^n}|\hat g(k)|^2/|k|^4 dk,\quad
\|(-\Delta)^{-1} g\|_{L^1}=\int_{{\mathbb R}^n}|\hat g(k)|/|k|^2
dk,$$ where $g=Vf$. Hence it may be expected that $f\in L^2({\mathbb
R}^n)$ if $n\geq 5$ and $f\in L^1({\mathbb R}^n)$ for $n=3,4$. If
$0$ is an eigenvalue, it is called an embedded eigenvalue or
threshold eigenvalue. Hence it may be expected that an embedded
eigenvalue exists for $n\geq 5$. On the other hand for $n=3,4$, the
eigenvector is predicted to be in $L^1({\mathbb R}^n)$, and then $0$
is called a threshold resonance.

The discrete Schr\"odinger operators have attracted considerable
attentions for both combinatorial Laplacians and quantum graphs; for
some recent summaries refer to see \cite{C97, G01, BCFK06, EKKST08,
BK12, P12, KS13} and the references therein. Particularly,
eigenvalue behavior of  discrete Schr\"odinger operators  are
discussed in e.g. \cite{ALMM06,EKW10, BS12, HSSS12} and  are briefly
discussed in \cite{FIC,LB09, HSSS12} when potentials are  delta
functions with a single point mass. In \cite{ALMM06}  an explicit
example of a $-\Delta-V$  on the three-dimensional lattice
$\mathbb{Z}^3$, which possesses both a \textit{lower} threshold
resonance and a \textit{lower} threshold eigenvalue, is
constructed, where $-\Delta$ stands for the standard discrete
Laplacian in $\ell^2(\mathbb Z^n)$ and
 $V$
is a multiplication operator by the function
\begin{align}
\label{star} \hat V(x)=\mu\delta_{x0}+\frac{\lambda}{2}
\sum_{|s|=1}\delta_{xs}, \qquad \lambda\geq0,\mu\geq 0,
\end{align}
where $\delta_{xs}$ is the Kronecker delta.

The authors of  \cite{LB09} considered the restriction of this
operator to the Hilbert space $\ell^2_{\rm e}(\mathbb{Z}^3)$ of all
even functions in $\ell^2(\mathbb{Z}^3)$. They investigated the
dependence of the number of eigenvalues of $H_{\lambda\mu}$ on
$\lambda,\mu$ for $\lambda> 0,\mu > 0$, and they showed that all
eigenvalues arise either from a \textit{lower} threshold resonance
or from \textit{lower} threshold eigenvalues under a variation of
the interaction energy. Moreover, they also proved that the first
\textit{lower} eigenvalue  of the Hamiltonian $-\Delta- V$ arises
only from a \textit{lower}  threshold resonance under a variation of
the interaction energy. A continuous version,  two-particle
Schr\"odinger operator, is shown by Newton (see p.1353 in
\cite{N77}) and proved by Tamura \cite[Lemma 1.1]{Tam2} using a
result by Simon \cite{S}. In case $\lambda=0$, Hiroshima et.al.
\cite{HSSS12} showed that an threshold eigenvalue does appear for $n
\geq 5$ but does not for $1\leq n\leq 4$.

There are still  interesting spectral properties of the
  discrete  Schr\"odinger operators
with potential of the form \eqref{star}.

 In this paper, we investigate the spectrum of $H_{\lambda\mu}$,
specifically, {\it lower} and {\it upper} threshold eigenvalues and
threshold resonances for {\it any}
$$(\lambda,\mu)\in
{\mathbb R}^2\quad \mbox{ and } \quad n\geq 1.$$ {We emphasize that
there also appears so-called super-threshold resonances in our model
for $n=1$.} See Proposition \ref{takahashi}. The definitions of
these are given in Definition \ref{reson def even}. Our result is an
extensions of \cite{LB09,ALMM06,HSSS12}.

 In this paper,  we study,  in particular,
eigenvalues in $(-\infty, 0)$, lower threshold eigenvalues, lower
threshold resonances {and lower super-threshold resonances}. In a
similar manner to this, we can also investigate eigenvalues in
$(2n,\infty) $, upper threshold eigenvalues, upper threshold
resonances and {and upper super-threshold resonances}, but we left
them to readers, and we focus on studying the spectrum contained in
$(-\infty,0]$.

The paper is organized as follows. In Section 2,  a discrete
Schr\"odinger operator  in  the coordinate and momentum
representation is described, and it is decomposed into direct sum of
operators $H_{\lambda\mu}^{\mathrm{e}} $ and
$H_{\lambda}^{\mathrm{o}} $. The spectrum  of
$H_{\lambda\mu}^{\mathrm{e}} $ and $H_{\lambda}^{\mathrm{o}} $ are
investigated in Section 3. Section 4 is devoted to showing main
results,  Theorems \ref{Main 1} and \ref{Main 2}. The proofs of some
lemmas belong to Appendix.

\section{Discrete Schr\"odinger operators on lattice}
Let $\mathbb{Z}^{n}$ be the ${n}$--dimensional lattice, i.e. the
${n}$--dimensional integer set. The Hilbert space of  $\ell^2$
sequences on $\mathbb{Z}^n$ is denoted by $ \ell^2(\mathbb{Z}^n)$. A
notation $\mathbb{T} ^{n}= (\mathbb{R}/2\pi \mathbb{Z})^n
=(-\pi,\pi]^{n}$
 means the $n$-dimensional torus (the first Brillouin zone, i.e., the dual
group of $\mathbb{Z}^n$) equipped with its Haar measure, and let $
L^2_{\rm e}(\mathbb{T} ^n)$ (resp. $ L^2_{\rm o}(\mathbb{T} ^n)$)
denote the subspace of all
 even (resp. odd) functions of the Hilbert space $L^2(\mathbb{T} ^n)$   of $L^2$-functions on
 $\mathbb{T} ^n$. Let
$\langle \cdot,\cdot\rangle$ mean the inner product on
$L^2(\mathbb{T}^n)$.

Let $T(y)$ be the  shift operator by $y\in \mathbb{Z}^{n}$: $
(T(y)f)(x)=f(x+y)$ for $f\in\ell^2(\mathbb{Z}^n)$ and $x\in
\mathbb{Z}^n$. The standard discrete Laplacian $\Delta$ on
$\ell^2(\mathbb{Z}^n)$ is usually associated with the bounded
self-adjoint multidimensional
  Toeplitz-type
 operator:
\begin{align*}%
\Delta=\frac{1}{2}\sum_{x\in {\mathbb{Z}}^n \atop |x|=1}(T(x)-T(0)).
\end{align*}
Let us define the discrete Schr\"odinger operator on
 $\ell^2(\mathbb{Z}^n)$  by
 \begin{align*}%
    \hat H_{\lambda\mu}=-\Delta-\widehat{V},
\end{align*}
where the potential
 $\widehat{V}$ depends on two parameters $\lambda,\mu\in \mathbb{R}$ and
 satisfies
$$
(\widehat{V}f)(x)=\left\{
       \begin{array}{ll}
         \mu f(x), & \hbox{if}\quad  x=0 \\
         \frac{\lambda}{2} f(x), & \hbox{if}\quad |x|=1 \\
         0, & \hbox{if} \quad |x|>1
       \end{array}
     \right.
, \quad f\in \ell^2(\mathbb{Z}^n),\,x\in \mathbb{Z}^n,
$$
which awards $\hat H_{\lambda\mu}$ to be a bounded self-adjoint
operator. Let $\mathcal{F}$ be  the standard Fourier transform
    $\mathcal{F}:L^2(\mathbb{T} ^n) \longrightarrow   \ell^2(\mathbb{Z}^n)$ defined by
 $(Ff)(x)=\frac{1}{(2\pi)^n}\int_{{\mathbb T}^n} f(\theta) e^{ix\theta} d\theta$ for $f\in L^2({\mathbb T}^n)$ and $x\in {\mathbb Z}^n$.
 The inverse Fourier transform is then given by
 $(F^{-1}f)(\theta)=\sum_{x\in{\mathbb Z}^n} f(x) e^{-ix\theta}$ for $f\in \ell^2({\mathbb Z}^n)$ and $\theta\in{\mathbb T}^n$.  The Laplacian $\Delta$  in the momentum
representation
 is defined as
$$
\widehat{\Delta}=\mathcal{F}^{-1}\Delta \mathcal{F},$$ and
$\widehat{\Delta}$ acts as the multiplication operator:
$$
(\widehat{\Delta}\hat{f})(p) =-E(p)
 \hat{f}(p),$$
 where $E(p)$ is given by
 $$E(p)=\sum_{j=1}^n(1-\cos p_j).$$
 In the physical literature, the function
 $\sum_{j=1}^n(1-\cos p_j)$,
 being  a real valued-function on $\mathbb{T} ^n$,   is called the {\it
dispersion relation} of the Laplace operator. We also define the
discrete Schr\"odinger operator in momentum representation. Let
$H_0=-\hat{\Delta}$. The operator $H_{\lambda\mu}$,  in the momentum
representation, acts in the Hilbert space  $L^2(\mathbb{T} ^n) $ as
$$
H_{\lambda\mu}=H_0-V,
$$
where $V$ is an integral operator of convolution type
\begin{align*}
( Vf)(p)=(2\pi )^{-\frac{n}{2}}\int_{\mathbb{T} ^n}  v (p-s)f(s)d
s,\quad f\in L^2(\mathbb{T} ^n).
\end{align*}
Here the kernel function $ v(\cdot)$ is the Fourier transform of
$\widehat{V}(\cdot)$
 computed as
\begin{align*}%
 v(p)=\frac{1}{(2 \pi )^{\frac{n}{2}}} \left( \mu +
{\lambda} \sum_{i=1}^n\cos p_i \right),
 \end{align*}
and it allows  the potential operator $V$ to get the representation
$V=V_{\lambda\mu}^{\rm e}+V_{\lambda}^{\rm o}$, where
\begin{align*}
    V_{\lambda\mu}^{\rm e}=    \mu \langle
\cdot,\mathrm{c}_0\rangle\mathrm{c}_0+\frac{\lambda}{2} \sum_{j=1}^n
\langle \cdot,\mathrm{c}_j\rangle\mathrm{c}_j, \quad
V_{\lambda}^{\rm o}=\frac{\lambda}{2} \sum_{j=1}^n \langle
\cdot,\mathrm{s}_j\rangle\mathrm{s}_j.
\end{align*}
Here $\{\mathrm{c}_0,\mathrm{c}_j,\mathrm{s}_j:j=1,\dots,n\}$ is an
orthonormal system in $L^2(\mathbb{T}^n)$, where
\begin{align*}
{\mathrm{c}}_0(p)=\frac{1}{(2\pi)^{\frac{n}{2}}},\quad
{\mathrm{c}}_j(p)=\frac{\sqrt{2}}{(2\pi)^{\frac{n}{2}}}\cos
p_j,\quad
{\mathrm{s}}_j(p)=\frac{\sqrt{2}}{(2\pi)^{\frac{n}{2}}}\sin
p_j,\quad j=1,\dots,n.
\end{align*}
One can check easily that the subspaces $L_{\rm e}^2(\mathbb{T} ^n)$
of all even  functions and $L_{\rm o}^2(\mathbb{T} ^n)$ of all odd
functions in $L^2(\mathbb{T} ^n)$ reduce $H_{\lambda\mu}$. Adopting
$V=V_{\lambda\mu}^{\rm e}+V_{\lambda}^{\rm o}$, we can see that
the restriction $H_{\lambda\mu}^{\mathrm{e}} $ (resp.
$H_{\lambda}^{\rm o}$)  of the operator $H_{\lambda\mu}$ to $L_{\rm
e}^2(\mathbb{T} ^n)$ (resp. $L_{\rm o}^2(\mathbb{T} ^n)$)
 acts with the  form
\begin{align*}
H_{\lambda\mu}^{\mathrm{e}} =H_0-V_{\lambda\mu}^{\rm e} \quad ({\rm
resp.}\ H_{\lambda}^{\rm o}=H_0-V_{\lambda}^{\rm o}).
\end{align*}
Hence $H_{\lambda\mu} $ is decomposed into the even Hamiltonian and
the odd Hamiltonian:
$$
H_{\lambda\mu}= H_{\lambda\mu}^{\mathrm{e}} \oplus
H_{\lambda}^{\mathrm{o}}
$$
under the decomposition $L^2(\mathbb{T}^n)=L^2_{\rm
e}(\mathbb{T}^n)\oplus L_{\rm o}^2(\mathbb{T}^n)$. We have the
fundamental proposition below:
\begin{proposition}
It follows that
  $\sigma_{\text{\rm ess}}(H_{\lambda\mu})=\sigma_{\text{ac}}(H_{\lambda\mu})=[0,2n]$.
\end{proposition}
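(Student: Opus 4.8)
The plan is to treat $H_{\lambda\mu}=H_0-V$ as a finite-rank perturbation of the multiplication operator $H_0$ and to invoke the two classical stability theorems, one for the essential spectrum and one for the absolutely continuous spectrum.

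First I would determine the spectrum of the free operator $H_0$. Since $H_0$ acts as multiplication by $E(p)=\sum_{j=1}^n(1-\cos p_j)$ on $L^2(\mathbb{T}^n)$, its spectrum coincides with the essential range of $E$. As each term $1-\cos p_j$ runs through $[0,2]$ and $E$ is continuous on the torus, the range of $E$ is exactly $[0,2n]$, so $\sigma(H_0)=[0,2n]$. I would then argue that this spectrum is purely absolutely continuous. For a multiplication operator the spectral measure associated with a vector $f$ is the pushforward $E_\ast(|f|^2\,dp)$, so it suffices to show that $E_\ast(dp)$ is absolutely continuous with respect to Lebesgue measure on $\mathbb{R}$. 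This follows because $\nabla E(p)=(\sin p_1,\dots,\sin p_n)$ vanishes only at the finitely many points with all $p_j\in\{0,\pi\}$; hence $E$ is a submersion off a set of measure zero, its set of critical values is finite, and by the coarea formula the image measure admits a density away from these finitely many values. Consequently $H_0$ has no point or singular continuous spectrum, and $\sigma(H_0)=\sigma_{\mathrm{ac}}(H_0)=\sigma_{\mathrm{ess}}(H_0)=[0,2n]$.

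Next I would record that the perturbation $V=V_{\lambda\mu}^{\mathrm e}+V_{\lambda}^{\mathrm o}$ is a self-adjoint operator of finite rank (at most $2n+1$), as is visible from its explicit expression in terms of the orthonormal system $\{\mathrm{c}_0,\mathrm{c}_j,\mathrm{s}_j\}$. In particular $V$ is compact and trace class. Applying Weyl's theorem on the invariance of the essential spectrum under compact perturbations gives $\sigma_{\mathrm{ess}}(H_{\lambda\mu})=\sigma_{\mathrm{ess}}(H_0)=[0,2n]$. Applying the Kato--Rosenblum theorem, the wave operators for the pair $(H_{\lambda\mu},H_0)$ exist and are complete because $V$ is trace class; therefore the absolutely continuous parts of $H_{\lambda\mu}$ and $H_0$ are unitarily equivalent, whence $\sigma_{\mathrm{ac}}(H_{\lambda\mu})=\sigma_{\mathrm{ac}}(H_0)=[0,2n]$. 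Combining the two identities yields the claim.

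The only step requiring genuine care is the absolute continuity of $\sigma(H_0)$: merely identifying the set $[0,2n]$ is immediate, but ruling out embedded eigenvalues and singular continuous spectrum of the free operator is what forces the analysis of the critical set of $E$. Once that is in place, everything else is a direct application of the stability theorems, and the finite rank of $V$ makes the trace-class hypothesis of Kato--Rosenblum trivially satisfied.
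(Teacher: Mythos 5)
Your proposal is correct and follows essentially the same route as the paper: identify $\sigma(H_0)=[0,2n]$, use the finite rank (hence compact, trace-class) nature of $V$ to invoke Weyl's theorem for the essential spectrum, and use existence and completeness of the wave operators (Kato--Rosenblum) to transfer the absolutely continuous spectrum. The only difference is that you spell out the purely absolutely continuous character of $H_0$ via the critical set of $E$, a point the paper takes for granted.
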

\begin{proof}
The perturbation $V$ is a finite rank operator and then the
essential spectrum of the operator $H_{\lambda\mu}$ fills in
$[0,2n]=\sigma_{\rm ess}(H_0)$.

Let $\mathcal{H}_{\rm ac}$ be the absolutely continuous part of
$H_{\lambda\mu}$. It can be seen that the wave operator
$W_\pm=s-\lim_{t\to\pm\infty} e^{itH_{\lambda\mu}}e^{-itH_0}$ exists
and is complete since $H_{\lambda\mu}$ is a finite rank perturbation
of $H_0$. This implies that $H_0$ and
 $H_{\lambda\mu}\lceil_{{\mathcal{H}}_{\rm ac}} $ are unitarily equivalent by $W_\pm^{-1} H_0 W_\pm=
H_{\lambda\mu}\lceil_{\mathcal{H}_{\rm ac}}$. Then $\sigma_{\rm
ac}(H_0)= \sigma_{\rm ac}(H_{\lambda\mu})=[0,2n]$.
\end{proof}

In what follows, we shall study the spectrum of $H_{\lambda\mu}$ by
investigating the spectrum of $H_{\lambda\mu}^{\mathrm{e}} $ and $
H_{\lambda}^{\mathrm{o}} $ separately.

\section{Spectrum  of $H_{\lambda\mu}^{\mathrm{e}} $}
\subsection{Birman-Schwinger principle for $z\in {\mathbb C}\setminus[0,2n]$}
The Birman-Schwinger principle helps us to reduce the problem to the
study of spectrum of a finite dimensional linear operator: a matrix.

We denote the resolvent of Laplacian $H_0$ by  $(H_0-z)^{-1}$, where
$z\in \mathbb{C}\setminus [0,2n]$. We can see that
$(H_0-z)^{-1}V_{\lambda\mu}^{\rm e}$ is a finite rank operator. Let
$M_{n+1}$ denote the linear hull of $\{c_0,\cdots,c_n\}$. Then
$M_{n+1}$ is an $(n+1)$-dimensional subspace of $L_{\rm
e}^2(\mathbb{T} ^n)$. Furthermore we define $\tilde
M_{n+1}=(H_0-z)^{-1}M_{n+1}$ for $z\in \mathbb{C}\setminus [0,2n]$.
Then $\tilde M_{n+1}$ is also an $(n+1)$-dimensional subspace of
$L_{\rm e}^2(\mathbb{T} ^n)$ since $(H_0-z)^{-1}$ is invertible. We
define $C_1:\mathbb{C}^{n+1}\to L_{\rm e}^2(\mathbb{T} ^n)$ by the
map
$$C_1: \mathbb{C}^{n+1}\ni \VVV {w_0\\ \vdots\\ w_n}
\mapsto (H_0-z)^{-1}\left(\mu w_0 c_0+ \frac{\lambda}{2}\sum_{j=1}^n
w_j c_j\right)\in \tilde M_{n+1},$$ and define $C_2:L_{\rm
e}^2(\mathbb{T} ^n)\to \mathbb{C}^{n+1}$ by the map
$$C_2:L_{\rm e}^2(\mathbb{T} ^n)\ni \phi\mapsto
\VVV{ \langle\phi,c_0\rangle\\ \vdots\\
\langle \phi,c_n\rangle}\in \mathbb{C}^{n+1}.$$
%For $Z=\VVV{ w_0\\  \vdots\\w_n}$ we note that $$C_1 Z= \frac{1}{(2\pi) } \frac{1}{E(p)-z}\left( \mu w_0+ \frac{\lambda}{\sqrt{2}} \sum_{j=1}^n w_j\cos p_j \right) $$ and that $D_2 M_{n+1}^\perp=0$. Let $C_2=D_2\lceil_{M_{n+1}}$.
Then we have the sequence of maps:
\begin{align}
\label{seq} L_{\rm e}^2(\mathbb{T} ^n)
 \stackrel{C_2}{\longrightarrow}
\mathbb{C}^{n+1} \stackrel{C_1}{\longrightarrow} L_{\rm
e}^2(\mathbb{T} ^n)
\end{align}
and $C_1C_2:L_{\rm e}^2(\mathbb{T} ^n)\to L_{\rm e}^2(\mathbb{T}
^n)$.
%, and both $C_1$ and $C_2$ have  $(n+1)\times (n+1)$ matrix representations under the identification (\ref{seq}).
Notice that $C_1$ and $C_2$ depend on the choice of $z$. We directly
have
\begin{align}\label{G1e}
   (H_0-z)^{-1}V_{\lambda\mu}^{\rm e}=C_1C_2.
   \end{align}
%   In particular,  $  (H_0-z)^{-1}V_{\lambda\mu}^{\rm e}=   C_1 C_2\oplus 0$  under the decomposition $L_{\rm e}^2(\mathbb{T} ^n)= M_{n+1}\oplus M_{n+1}^\perp$.
Define
$$
G_{\rm e}(z)=C_2C_1:\mathbb{C}^{n+1}\to \mathbb{C}^{n+1}.
$$
We shall show the explicit form of $G_{\rm e}(z)$ in \eqref{G=AB}
below.
\begin{lemma}[Birman-Schwinger principle for {$z\in {\mathbb C}\setminus[0,2n]$}]
\label{lem 1e}\
\begin{enumerate}
\item[(a)] %The number
$z\in \mathbb{C}\setminus [0,2n]$ is an eigenvalue of
$H_{\lambda\mu}^{\mathrm{e}} $ if and only if  $1\in \sigma(
 G_{\rm e} (z))$.%, or $\mathrm{det}\left(G_{\rm e}(z)-{\rm I}\right)=0$.

\item[(b)] Suppose that  $z\in \mathbb{C}\setminus [0,2n]$ and
$(\lambda,\mu)$ satisfies ${\rm det}(G_{\rm e}(z)-{\rm I})=0$. Then
the vector $Z=\VVV{w_0\\ \vdots\\ w_n}\in \mathbb{C}^{n+1}$ is an
eigenvector of $G_{\rm e}(z)$ associated with eigenvalue $1$ if and
only if
 $ f=C_1Z$, i.e.
\begin{align}\label{eigen fe}
f(p)=\frac{1}{(2\pi) } \frac{1}{E(p)-z}\left( \mu w_0+
\frac{\lambda}{\sqrt {2}} \sum_{j=1}^n w_j\cos p_j \right)
\end{align}%%
 is an eigenfunction of $H_{\lambda\mu}^{\mathrm{e}} $ associated with eigenvalue $z$.
\end{enumerate}\end{lemma}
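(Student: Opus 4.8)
The plan is to recast the eigenvalue equation $H_{\lambda\mu}^{\mathrm{e}} f = zf$ as a fixed-point relation and then transport it through the maps $C_1,C_2$ to the finite-dimensional space $\mathbb{C}^{n+1}$. Writing $H_{\lambda\mu}^{\mathrm{e}} = H_0 - V_{\lambda\mu}^{\rm e}$, the equation $H_{\lambda\mu}^{\mathrm{e}} f = zf$ is equivalent to $(H_0-z)f = V_{\lambda\mu}^{\rm e} f$. Since $z \in \mathbb{C}\setminus[0,2n] = \mathbb{C}\setminus\sigma(H_0)$, the operator $H_0 - z$ is boundedly invertible, so this is in turn equivalent to $f = (H_0-z)^{-1}V_{\lambda\mu}^{\rm e} f = C_1C_2 f$ by \eqref{G1e}. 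Hence $z$ is an eigenvalue of $H_{\lambda\mu}^{\mathrm{e}}$ precisely when $f = C_1C_2 f$ admits a nonzero solution.

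For (a) I would set up a correspondence between nonzero solutions $f$ of $f = C_1C_2 f$ and nonzero eigenvectors of $G_{\rm e}(z) = C_2C_1$ for the eigenvalue $1$. Given such an $f$, put $Z = C_2 f$; applying $C_2$ to $f = C_1C_2 f = C_1 Z$ gives $Z = C_2C_1 Z = G_{\rm e}(z)Z$, and $Z \neq 0$ because $Z=0$ would force $f = C_1 Z = 0$. Conversely, given a nonzero $Z$ with $G_{\rm e}(z)Z = Z$, put $f = C_1 Z$; then $C_1C_2 f = C_1(C_2C_1 Z) = C_1 Z = f$, and $f \neq 0$ because $f = 0$ would give $Z = C_2C_1 Z = C_2 f = 0$. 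The crucial point, which sidesteps any injectivity or surjectivity concerns for $C_1,C_2$ (these can fail when $\mu$ or $\lambda$ vanishes), is that the fixed-point relations themselves propagate non-triviality in both directions. Finally, since $G_{\rm e}(z)$ is an $(n+1)\times(n+1)$ matrix, such a nonzero $Z$ exists if and only if $1 \in \sigma(G_{\rm e}(z))$, equivalently $\det(G_{\rm e}(z)-{\rm I}) = 0$.

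Statement (b) is the explicit form of this correspondence under the standing hypothesis $\det(G_{\rm e}(z)-{\rm I}) = 0$: the mutually inverse maps $f \mapsto C_2 f$ and $Z \mapsto C_1 Z$ identify the eigenspace of $H_{\lambda\mu}^{\mathrm{e}}$ at $z$ with the eigenspace of $G_{\rm e}(z)$ at $1$, which is exactly what was extracted in (a). The one remaining task is purely computational: substituting $\mathrm{c}_0(p) = (2\pi)^{-n/2}$ and $\mathrm{c}_j(p) = \sqrt{2}(2\pi)^{-n/2}\cos p_j$ into $f = C_1 Z = (H_0-z)^{-1}\bigl(\mu w_0 \mathrm{c}_0 + \tfrac{\lambda}{2}\sum_{j=1}^n w_j \mathrm{c}_j\bigr)$ and using that $(H_0-z)^{-1}$ acts as multiplication by $(E(p)-z)^{-1}$ reproduces the closed form \eqref{eigen fe} after collecting the normalization constants.

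I expect no serious obstacle, as this is the standard Birman--Schwinger reduction. The only points demanding care are the bookkeeping that keeps the $f \leftrightarrow Z$ correspondence nontrivial in both directions without assuming $C_1$ or $C_2$ injective, and the verification that the explicit substitution in (b) reproduces the precise normalization constant displayed in \eqref{eigen fe}.
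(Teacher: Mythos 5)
Your proposal is correct and follows essentially the same route as the paper: reduce $H_{\lambda\mu}^{\mathrm{e}}f=zf$ to the fixed-point equation $f=(H_0-z)^{-1}V_{\lambda\mu}^{\rm e}f=C_1C_2f$ and pass to $G_{\rm e}(z)=C_2C_1$. The only difference is cosmetic — the paper invokes the identity $\sigma(C_1C_2)\setminus\{0\}=\sigma(C_2C_1)\setminus\{0\}$ as a known fact, whereas you prove it inline by exhibiting the mutually inverse eigenvector correspondences $f\mapsto C_2f$ and $Z\mapsto C_1Z$, which is exactly the content needed for part (b) anyway.
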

\begin{proof}
It can be seen that
    $H_{\lambda\mu}^{\mathrm{e}} f=zf$ if and only if
    $
     f=
    (H_0-z)^{-1}V_{\lambda\mu}^{\rm e}f$.
Then $z\in \mathbb{C}\setminus [0,2n]$ is an eigenvalue of
$H_{\lambda\mu}^{\mathrm{e}} $ if and only if  $1\in \sigma(
 (H_0-z)^{-1}V_{\lambda\mu}^{\rm e})$.
 Hence
 $z\in \mathbb{C}\setminus
[0,2n]$ is an eigenvalue of $H_{\lambda\mu}^{\mathrm{e}} $ if and
only if  $1\in \sigma(C_1C_2)$ by
 the fact $\sigma(C_1C_2)\setminus\{0\}=\sigma(C_2C_1)\setminus\{0\}$.

Then it  completes the proof of (a). We can also see that
$C_2C_1Z=Z$ if and only if
 $ f=(H_0-z)^{-1}V_{\lambda\mu}^{\rm e}f=C_1C_2f$, where
$f=C_1Z$. Then the function  $f$ coincides with \eqref{eigen fe}.
\end{proof}

\subsection{Birman-Schwinger principle for $z=0$}
We consider the Birman-Schwinger principle for $z=0$, which is the
edge of the continuous spectrum of $H_{\lambda\mu}^{\mathrm{e}}$,
and it is the main issue to specify whether it is eigenvalue or
threshold of $H_{\lambda\mu}^{\mathrm{e}} $.

 In order to discuss
$z=0$ we extend the eigenvalue equation $H_{\lambda\mu}^{\mathrm{e}}
f=0$ in $L^2_{\rm e}(\mathbb{T}^n)$ to that in $L_{\rm
e}^1(\mathbb{T}^n)$. Note that $L_{\rm e}^2(\mathbb{T}^n)\subset
L_{\rm e}^1(\mathbb{T}^n)$. We consider the equation
\begin{equation}\label{Hf=0}
 E(p) f(p)-\frac{\mu}{(2\pi)^n}
 \int_{\mathbb{T}^n}f(p)dp-\frac{\lambda}{(2\pi)^n}
 \sum_{j=1}^n
 \cos p_j \int_{\mathbb{T}^n}\cos p_j f(p)dp =0
\end{equation}
in the Banach space $L^1_{\rm e}(\mathbb{T}^n)$. Conveniently, we
describe (\ref{Hf=0}) as $H_{\lambda\mu}^{\mathrm{e}} f=0$. Since we
consider a solution $f\in L_{\rm e}^1(\mathbb{T}^n)$, the integrals
 $\int_{\mathbb{T}^n}f(p)dp$ and
 $\int_{\mathbb{T}^n}\cos p_j f(p)dp$ are  finite for $j=1,...,n$.

The unique singular point of $1/E(p)$ is $p=0$, and  in the
neighborhood of $p=0$, we have $E(p)\approx |p|^2$. Then  the
following lemma is fundamental, and its proof is straightforward.

\begin{lemma}\label{Incl L2}
Let $h(p)={\varphi(p)}/{E(p)}$, where $\varphi\in C(\mathbb{T}^n)$.
Then (a)-(e) follow.
\begin{itemize}
\item[(a)]  It follows that $h\in L^2(\mathbb{T}^n)$ for $n\geq 5$, and
$h\in L^1(\mathbb{T}^n)$ for $n\geq 3$.
\item[(b)] Let $1\leq n\leq 4$ and $h\in L^2(\mathbb{T}^n)$. Then
$ \varphi(0)=0$.
\item[(c)] Let $1\leq n\leq 4$,
 $|\varphi(p)|<C |p|^{\alpha_n}$ for some $C>0$ and $\alpha_n>\frac{4-n}{2}$.
 Then
 $ h\in L^2(\mathbb{T}^n)$.
\item[(d)] Let $n=1,2$ and $h\in L^1(\mathbb{T}^n)$. Then
$ \varphi(0)=0$.
\item[(e)]
Let $n=1,2$,
 $|\varphi(p)|<C |p|^{\alpha_n}$ for some $C>0$ and $\alpha_n>2-n$.
 Then
$h\in L^1(\mathbb{T}^n)$.
\end{itemize}
\end{lemma}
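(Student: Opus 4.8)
The plan is to analyze each part by reducing the behavior of $h(p)=\varphi(p)/E(p)$ near its unique singularity at $p=0$, since $\varphi$ is continuous (hence bounded) and $1/E(p)$ is smooth and bounded away from any neighborhood of the origin. Thus for every part the integrability question localizes: I would fix a small ball $B_\delta=\{|p|<\delta\}\subset\mathbb{T}^n$ and observe that the contribution from $\mathbb{T}^n\setminus B_\delta$ is always finite, so the only issue is the integral over $B_\delta$. On $B_\delta$ I would use the stated asymptotics $E(p)\approx |p|^2$ (more precisely $c_1|p|^2\le E(p)\le c_2|p|^2$ for small $p$), which follows from $1-\cos p_j=\tfrac12 p_j^2+O(p_j^4)$.

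For part (a), the $L^2$ claim reduces to checking convergence of $\int_{B_\delta}|p|^{-4}\,dp$; passing to polar coordinates in $\mathbb{R}^n$ gives $\int_0^\delta r^{n-1}r^{-4}\,dr=\int_0^\delta r^{n-5}\,dr$, which is finite precisely when $n-5>-1$, i.e. $n\ge 5$. Similarly the $L^1$ claim reduces to $\int_{B_\delta}|p|^{-2}\,dp\sim\int_0^\delta r^{n-3}\,dr$, finite when $n\ge 3$. For the necessity statements (b) and (d), I would argue by contraposition: if $\varphi(0)\ne 0$ then by continuity $|\varphi(p)|\ge \tfrac12|\varphi(0)|>0$ on some small ball, so $|h(p)|\ge c|p|^{-2}$ there; then $\int_{B_\delta}|h|^2\gtrsim\int_0^\delta r^{n-5}\,dr=\infty$ for $1\le n\le 4$, contradicting $h\in L^2$, and likewise $\int_{B_\delta}|h|\gtrsim\int_0^\delta r^{n-3}\,dr=\infty$ for $n=1,2$, contradicting $h\in L^1$. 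For the sufficiency statements (c) and (e), the hypothesis $|\varphi(p)|<C|p|^{\alpha_n}$ gives $|h(p)|\le C'|p|^{\alpha_n-2}$ on $B_\delta$, so $\int_{B_\delta}|h|^2\lesssim\int_0^\delta r^{n-1+2(\alpha_n-2)}\,dr$, finite iff $n-1+2\alpha_n-4>-1$, i.e. $\alpha_n>\tfrac{4-n}{2}$, matching (c); and $\int_{B_\delta}|h|\lesssim\int_0^\delta r^{n-1+\alpha_n-2}\,dr$, finite iff $\alpha_n>2-n$, matching (e).

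The only point requiring mild care is making the comparison $E(p)\asymp|p|^2$ uniform on $B_\delta$ and justifying the change to Euclidean polar coordinates on the torus; both are routine because $B_\delta$ can be identified with a Euclidean ball for $\delta$ small and the Jacobian of the identification is bounded above and below. No step here is a genuine obstacle—the entire lemma is a localized power-counting argument—so as the problem statement itself notes, the proof is straightforward, and I would simply record the polar-coordinate radial exponents and read off the thresholds.
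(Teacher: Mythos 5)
Your proof is correct and is exactly the localized power-counting argument the paper has in mind (the paper omits the proof, stating only that it is straightforward): two-sided comparison $E(p)\asymp|p|^2$ near the unique singularity $p=0$, boundedness away from it, and reading off the radial exponents in polar coordinates for each of (a)--(e). All the thresholds you compute match the statement, and the contrapositive arguments for (b) and (d) correctly use the upper bound $E(p)\le c_2|p|^2$ to get the divergent lower bounds.
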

Operator  $H_0^{-1}$ is not bounded  in $L^2_{\rm e}(\mathbb{T}^n)$
as well as in $L^1_{\rm e}(\mathbb{T}^n)$. It is however obvious by
Lemma \ref{Incl L2} and $V_{\lambda\mu}^{\rm e}f\in C(\mathbb{T}^n)$
that
\begin{align}
L^2_{\rm e}(\mathbb{T}^n)\ni f\mapsto     H_0^{-1}V_{\lambda\mu}^{\rm e}f \in L^2_{\rm e}(\mathbb{T}^n),&\quad  n\geq 5,\\
L^1_{\rm e}(\mathbb{T}^n)\ni f\mapsto   H_0^{-1}V_{\lambda\mu}^{\rm
e}f \in L^1_{\rm e}(\mathbb{T}^n) ,& \quad n\geq 3.
\end{align}
Thus for $n\geq 3$ we can extend operators $C_1$ and $C_2$ defined
in the previous section. Let  $n\geq3$ and $Z=\VVV{ w_0\\
\vdots\\w_n}$. $\bar C_1: \mathbb{C}^{n+1} \to L^1_{\rm
e}(\mathbb{T} ^n)$ is  defined by
$$\bar C_1 Z=
\frac{1}{(2\pi) } \frac{1}{E(p)}\left( \mu w_0+ \frac{\lambda}{\sqrt
{2}} \sum_{j=1}^n w_j\cos p_j \right)$$ and $\bar C_2:L_{\rm
e}^1(\mathbb{T} ^n)\to \mathbb{C}^{n+1}$  by
$$\bar C_2:L_{\rm e}^1(\mathbb{T} ^n)\ni \phi\mapsto \VVV{\int_{\mathbb{T}^n} \phi(p) c_0dp\\
\int_{\mathbb{T}^n} \phi(p) c_1(p) dp\\
\vdots\\
\int_{\mathbb{T}^n} \phi(p) c_n(p) dp }\in \mathbb{C}^{n+1}.$$ Then
$\overline{C}_1\overline{C}_2: L^1_{\rm e}(\mathbb{T} ^n)\to
L^1_{\rm e}(\mathbb{T} ^n) $. Consequently $ G_{\rm
e}(0)=\overline{C}_2\overline{C}_1:\mathbb{C}^{n+1}\to
\mathbb{C}^{n+1} $ is described as an  $(n+1)\times (n+1)$ matrix.
\begin{lemma}
Let $n\geq 3$. Then it follows that (1) $\displaystyle \lim_{z\to 0}
G_{\rm e}(z)=G_{\rm e}(0)$ and (2)
$\sigma(H_0^{-1}V_{\lambda\mu}^{\rm e}) \setminus\{0\}=\sigma(G_{\rm
e}(0))\setminus~\{0\} $.
\end{lemma}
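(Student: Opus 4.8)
The plan is to prove the two claims separately, using the machinery already set up.

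For claim (1), $\lim_{z\to 0} G_{\rm e}(z) = G_{\rm e}(0)$, I would work directly with the matrix entries. Each entry of $G_{\rm e}(z) = C_2 C_1$ has the explicit form $(G_{\rm e}(z))_{ij} = \int_{\mathbb{T}^n} \frac{\varphi_j(p)}{E(p)-z}\, c_i(p)\, dp$ for suitable trigonometric weight functions $\varphi_j$ (built from $\mu c_0$ and $\frac{\lambda}{2} c_j$), with the $z=0$ entry given by the same integral with $z$ replaced by $0$. So the statement reduces to justifying passage of the limit $z\to 0$ inside each integral. The only issue is the singularity of the integrand near $p=0$, where $E(p)\approx |p|^2$. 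For $n\geq 3$, the dominating function $\frac{|\varphi_j(p)\, c_i(p)|}{E(p)}$ is integrable by Lemma \ref{Incl L2}(a), since the numerator is continuous and bounded. Thus I would invoke dominated convergence, uniformly over $z$ in a punctured neighborhood of $0$ in $\mathbb{C}\setminus[0,2n]$ (noting $|E(p)-z|\geq$ a suitable lower bound controlled by $\mathrm{dist}(z,[0,2n])$ and by $E(p)$ itself), to conclude each entry converges to its $z=0$ value.

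For claim (2), the spectral identity $\sigma(H_0^{-1} V_{\lambda\mu}^{\rm e})\setminus\{0\} = \sigma(G_{\rm e}(0))\setminus\{0\}$, I would reuse the factorization idea from Lemma \ref{lem 1e}. At $z=0$ we have $H_0^{-1} V_{\lambda\mu}^{\rm e} = \bar C_1 \bar C_2$ acting on $L^1_{\rm e}(\mathbb{T}^n)$, while $G_{\rm e}(0) = \bar C_2 \bar C_1$ acting on $\mathbb{C}^{n+1}$. The general principle that $\sigma(AB)\setminus\{0\} = \sigma(BA)\setminus\{0\}$ then yields the claim immediately, once it is checked that this principle applies in the present Banach-space setting. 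The verification is direct: for $\nu\neq 0$, if $(\bar C_1\bar C_2 - \nu)f = 0$ with $f\neq 0$, then setting $Z = \bar C_2 f$ one gets $f = \nu^{-1}\bar C_1 Z$, so $Z\neq 0$ (else $f=0$) and $\bar C_2\bar C_1 Z = \nu Z$; the reverse implication is symmetric. I would also record that, since $\bar C_1\bar C_2$ is a finite-rank operator, its nonzero spectrum consists of eigenvalues, so the spectral equivalence is genuinely an eigenvalue correspondence and no continuous-spectrum subtleties arise.

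The main obstacle is the first claim: establishing the interchange of limit and integral with a dominating function that is valid \emph{uniformly} as $z\to 0$ while $z$ ranges over $\mathbb{C}\setminus[0,2n]$, since $z$ may approach $0$ non-tangentially and $E(p)-z$ can be small both because $E(p)$ is small near $p=0$ and because $z$ is near $0$. I expect the resolution to hinge on the elementary bound $|E(p)-z|\geq c\, E(p)$ for $z$ in an appropriate approach region together with $|E(p)-z| \geq |\mathrm{Im}\, z|$ and $|E(p)-z|\geq \mathrm{dist}(z,[0,2n])$, so that $\frac{1}{|E(p)-z|}$ is dominated by a constant multiple of $\frac{1}{E(p)}$, which is integrable against the continuous numerator by Lemma \ref{Incl L2}(a). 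Once this domination is in place, dominated convergence finishes the argument and claim (2) is essentially formal.
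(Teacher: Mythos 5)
Your proof is correct and is precisely the argument the paper leaves implicit (its own proof reads only ``The proof is straightforward''): entrywise dominated convergence via Lemma \ref{Incl L2}(a) for claim (1), and the standard identity $\sigma(AB)\setminus\{0\}=\sigma(BA)\setminus\{0\}$ for the finite-rank factorization $H_0^{-1}V_{\lambda\mu}^{\rm e}=\bar C_1\bar C_2$, $G_{\rm e}(0)=\bar C_2\bar C_1$ for claim (2). The uniform-domination obstacle you flag dissolves once one notes that the limit is taken, as everywhere else in the paper, through $z\in(-\infty,0)$, where $E(p)-z=E(p)+|z|\geq E(p)$ gives the integrable dominating function $|\varphi_j(p)c_i(p)|/E(p)$ at once.
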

\begin{proof}
The proof is straightforward.
\end{proof}

\begin{lemma}[Birman-Schwinger principle for $z=0$]
\label{lem 1e res} Let $n\geq3$. Then (a) and (b) follow. (a)
Equation $H_{\lambda\mu}^{\mathrm{e}} f=0$  has a solution in
$L^1(\mathbb{T}^n)$ if and only if $1\in \sigma(G_{\rm e} (0))$. (b)
Let ${Z}=\VVV{ w_0\\  \vdots\\w_n}\in \mathbb{C}^{n+1}$ be the
solution of $G_{\rm e}(0)Z=Z$ if and only if
\begin{equation}\label{eigen fe res}
f(p)=\overline{C}_1{Z}(p)=\frac{1}{(2\pi)^{\frac{n}{2}}} \frac{1}{E
(p)}\left( \mu w_0+ \frac{\lambda}{{\sqrt{2}}} \sum_{j=1}^n w_j\cos
p_j \right)
\end{equation}%%
is a solution of $H_{\lambda\mu}^{\mathrm{e}} f=0$, where
$w_0,\cdots, w_n$ are actually described by
\begin{align}\label{w}
w_0= \frac{1}{(2\pi)^{\frac{n}{2}}} \int_{\mathbb{T}^n}f(p)dp, \quad
w_j=\frac{\sqrt 2}{(2\pi)^{\frac{n}{2}}} \int_{\mathbb{T}^n}f(p)\cos
p_jdp,\quad j=1,\dots,n.
\end{align}
\end{lemma}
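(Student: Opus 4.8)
The plan is to reproduce the argument of Lemma \ref{lem 1e} verbatim at the threshold $z=0$, with the resolvent $(H_0-z)^{-1}$ replaced by the (unbounded) inverse $H_0^{-1}$, which acts as division by $E(p)$, and the Hilbert space $L^2_{\rm e}(\mathbb{T}^n)$ replaced by the Banach space $L^1_{\rm e}(\mathbb{T}^n)$. The first task is to rewrite the integral equation \eqref{Hf=0} --- which is by definition the meaning of $H_{\lambda\mu}^{\mathrm{e}}f=0$ at the threshold --- as the fixed point equation $f=\overline{C}_1\overline{C}_2 f$. A direct computation (using $\mu\langle f,c_0\rangle c_0=(2\pi)^{-n}\mu\int_{\mathbb{T}^n}f\,dp$ and similarly for the $c_j$) identifies the $\mu$- and $\lambda$-terms of \eqref{Hf=0} with $(V_{\lambda\mu}^{\rm e}f)(p)$, so \eqref{Hf=0} reads $E(p)f(p)=(V_{\lambda\mu}^{\rm e}f)(p)$, i.e. $f=H_0^{-1}V_{\lambda\mu}^{\rm e}f$. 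Since $V_{\lambda\mu}^{\rm e}f$ is a trigonometric polynomial, hence continuous, Lemma \ref{Incl L2}(a) guarantees that $H_0^{-1}V_{\lambda\mu}^{\rm e}f=(V_{\lambda\mu}^{\rm e}f)/E\in L^1_{\rm e}(\mathbb{T}^n)$ for $n\geq 3$, so the extended factorization $H_0^{-1}V_{\lambda\mu}^{\rm e}=\overline{C}_1\overline{C}_2$ is well defined on all of $L^1_{\rm e}(\mathbb{T}^n)$. Note also that any $L^1$-solution $f=(V_{\lambda\mu}^{\rm e}f)/E$ is automatically even, so working in $L^1_{\rm e}(\mathbb{T}^n)$ loses nothing.

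For (a), I would invoke the general Banach-space fact (already used in Lemma \ref{lem 1e}) that for the bounded operators $\overline{C}_1:\mathbb{C}^{n+1}\to L^1_{\rm e}(\mathbb{T}^n)$ and $\overline{C}_2:L^1_{\rm e}(\mathbb{T}^n)\to\mathbb{C}^{n+1}$ one has $\sigma(\overline{C}_1\overline{C}_2)\setminus\{0\}=\sigma(\overline{C}_2\overline{C}_1)\setminus\{0\}$. Since $\overline{C}_1\overline{C}_2$ factors through $\mathbb{C}^{n+1}$ it has finite rank, so $1\in\sigma(\overline{C}_1\overline{C}_2)$ is equivalent to $1$ being an eigenvalue, i.e. to the existence of a nonzero $f\in L^1_{\rm e}(\mathbb{T}^n)$ with $\overline{C}_1\overline{C}_2 f=f$; by the reduction above this is precisely a nontrivial $L^1$-solution of \eqref{Hf=0}. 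On the other side of the identity $\overline{C}_2\overline{C}_1=G_{\rm e}(0)$, whence $1\in\sigma(\overline{C}_1\overline{C}_2)$ if and only if $1\in\sigma(G_{\rm e}(0))$, which is the claim.

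For (b), the correspondence between eigenvectors is set up exactly as in Lemma \ref{lem 1e}(b). If $G_{\rm e}(0)Z=Z$ with $Z\neq 0$, put $f=\overline{C}_1 Z$; then $\overline{C}_1\overline{C}_2 f=\overline{C}_1(\overline{C}_2\overline{C}_1)Z=\overline{C}_1 G_{\rm e}(0)Z=\overline{C}_1 Z=f$, so $f$ solves $H_{\lambda\mu}^{\mathrm{e}}f=0$, and the explicit form \eqref{eigen fe res} is just the definition of $\overline{C}_1 Z$. Conversely, given a solution $f$, set $Z=\overline{C}_2 f$; then $f=\overline{C}_1\overline{C}_2 f=\overline{C}_1 Z$ and $G_{\rm e}(0)Z=\overline{C}_2\overline{C}_1\overline{C}_2 f=\overline{C}_2 f=Z$. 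Reading off the components of $Z=\overline{C}_2 f$ from the definition of $\overline{C}_2$ together with the explicit forms of $c_0,\dots,c_n$ yields exactly the formulas \eqref{w} for $w_0,\dots,w_n$.

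The one point requiring care is confirming that this correspondence is a genuine bijection, i.e. that $Z\neq 0$ and $f\neq 0$ are matched and that eigenvectors are not swallowed by the kernels of $\overline{C}_1$ or $\overline{C}_2$. If $Z\neq 0$ but $\overline{C}_1 Z=0$, then $G_{\rm e}(0)Z=\overline{C}_2\overline{C}_1 Z=0\neq Z$, a contradiction, so $f=\overline{C}_1 Z\neq 0$; conversely a nonzero solution $f=\overline{C}_1\overline{C}_2 f$ forces $\overline{C}_2 f\neq 0$, so $Z\neq 0$. The only genuinely new ingredient compared with Lemma \ref{lem 1e} is the passage from $L^2$ to $L^1$, which is precisely what Lemma \ref{Incl L2}(a) and the hypothesis $n\geq 3$ are there to license; everything else is the same finite-rank Birman--Schwinger bookkeeping, and I expect no substantial obstacle beyond checking these integrability and nondegeneracy points.
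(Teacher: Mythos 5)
Your proposal is correct and follows essentially the same route as the paper: reduce $H_{\lambda\mu}^{\mathrm{e}}f=0$ in $L^1_{\rm e}(\mathbb{T}^n)$ to the fixed-point equation $f=H_0^{-1}V_{\lambda\mu}^{\rm e}f=\overline{C}_1\overline{C}_2 f$, apply $\sigma(\overline{C}_1\overline{C}_2)\setminus\{0\}=\sigma(G_{\rm e}(0))\setminus\{0\}$ for (a), and match eigenvectors via $f=\overline{C}_1 Z$, $Z=\overline{C}_2 f$ for (b). The extra checks you include (integrability via Lemma \ref{Incl L2}, and that the correspondence does not lose nonzero eigenvectors in the kernels of $\overline{C}_1$ or $\overline{C}_2$) are details the paper leaves implicit, but they do not change the argument.
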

\begin{proof}
Let us consider $H_{\lambda\mu}^{\mathrm{e}} f=0$ in $L^1_{\rm
e}(\mathbb{T}^n)$. Hence $f=H_0^{-1} V_{\lambda\mu}^{\rm e}f$ in
$L^1_{\rm e}(\mathbb{T}^n)$. Then $L^1$-solution of
$H_{\lambda\mu}^{\mathrm{e}} f=0$ exists if and only if  $1\in
\sigma(H_0^{-1}V_{\lambda\mu}^{\rm e})$, and hence $L^1$-solution of
$H_{\lambda\mu}^{\mathrm{e}} f=0$ exists if and only if  $1\in
\sigma(\bar  C_1\bar C_2)$. Due to the fact $\sigma(\bar C_1\bar
C_2)\setminus\{0\}= \sigma(G_{\rm e}^0)\setminus\{0\}$ the proof of
(a) is complete. We can also see that   $\bar C_2\bar C_1Z=Z$ if and
only if
 $ f=H_0^{-1}V_{\lambda\mu}^{\rm e}f=\bar C_1\bar C_2f$, where
$f=\bar C_1Z$. Then the function  $f$ coincides with \eqref{eigen fe
res}. This fact   ends the proof of (b).
\end{proof}

\subsection{Zeros of $\det (G_{\rm e}(z)-{\rm I})$}
\label{delta c zero}
\subsubsection{Factorization}
By the Birman-Schwinger principle  in what follows we focus on
investigating the spectrum of the  $(n+1)\times (n+1)$-matrix
$G_{\rm e}(z)$. Since $G_{\rm e}(z)$ is defined for $z\in (-\infty,
0)$ for $n=1,2$, and $z\in (-\infty, 0]$ for $n\geq3$. Hence in this
section we suppose that $
z\in\left\{\begin{array}{ll} (-\infty, 0)& n=1,2,\\
(-\infty,0]& n\geq3.
\end{array}
\right. $ As  the function $E(p)=E(p_1,\dots,p_n)$ is invariant with
respect to the permutations of  its arguments $p_1,\dots,p_n$, the
integrals used for studying the spectrum of $G_{\rm e}(z)$:
  \begin{align}
a(z)&=  \langle c_0,(H_0-z)^{-1} c_0 \rangle =\frac{1}{(2\pi)^n}
\int_{\mathbb{T} ^n} \frac{1}{E(p)-z}dp,  \\%%
b(z)&=\frac{1}{\sqrt{2}}\langle c_0,(H_0-z)^{-1} c_j \rangle =
\frac{1}{(2\pi)^n} \int_{\mathbb{T} ^n} \frac{\cos p_j }{E(p)-z}dp, \\
c (z)&=\frac{1}{2}\langle c_j,(H_0-z)^{-1} c_j \rangle =
\frac{1}{(2\pi)^{n}} \int_{\mathbb{T} ^n} \frac{\cos^2 p_j
 }{E(p)-z}dp,
\\
%\quad &j=1,\dots,n,\\%%
%%
d(z)&=\frac{1}{2}\langle c_i,(H_0-z)^{-1} c_j \rangle
%=\frac{1}{2}\langle c_j,(H_0-z)^{-1} c_i\rangle
=\frac{1}{(2\pi)^{n}} \int_{\mathbb{T} ^n} \frac{\cos p_i\cos
p_j  }{E(p)-z}dp,\quad i\not=j, \\
%\\
%\quad & i,j=1\dots,n,\,\, i \neq j,%%
\label{s(z)}
  s(z)&=\frac{1}{2}\langle \mathrm{s}_j,(H_0-z)^{-1} \mathrm{s}_j \rangle=
\frac{1}{(2\pi)^{n}} \int_{\mathbb{T}^n} \frac{\sin^2 p_j
}{E(p)-z}dp.
\end{align}
also do not depend on the particular choice of indices $0\leq
i,j\leq n$. Note that $a(z),b(z), c(z)$ and $s(z)$ are defined for
$n\geq1$ but $d(z)$  for  $n\geq 2$. From the definition of $G_{\rm
e}(z)=(a_{ij})_{0\leq i,j\leq n}$, coefficients  $a_{ij}=a_{ij}(z)$
are explicitly described as
\begin{align*}
\left\{\begin{array}{ll} a_{00}(z)=\mu a(z),\quad
a_{0j}(z)=\frac{\lambda}{\sqrt{2}}b(z),
& j=1,\dots, n,\\
a_{i0}(z)=\sqrt{2}\mu b(z),a_{ii}(z)=\lambda c(z),&i=1,...,n\\
a_{ij}(z)=\lambda d(z), &  i,j=1,\dots n,\,j\neq i,
\end{array}
\right.
\end{align*}
Hence for $n\geq2$ the matrix $G_{\rm e}(z)$ has the form
\begin{align}\label{G=AB}
  G_{\rm e}(z)=\left(
           \begin{array}{ccccc}
             \mu a(z) &  \frac{\lambda}{\sqrt{2}} b(z) &  \ldots &  \ldots & \frac{\lambda}{\sqrt{2}} b(z) \\
             \sqrt{2}\mu b(z) & \lambda c(z) & \lambda d(z) &  \dots &  \lambda d(z) \\
             \vdots & \lambda d(z) & \ddots & \dots & \vdots\\
             \vdots &  \vdots & \dots & \ddots & \lambda d(z)\\
             \sqrt{2} \mu b(z) & \lambda d(z) &  \dots & \lambda d(z) & \lambda c(z)\\
           \end{array}
         \right)
\end{align}
and for $n=1$,
\begin{align}\label{G=AB1}
  G_{\rm e}(z)=\left(
           \begin{array}{cc}
             \mu a(z) &  \frac{\lambda}{\sqrt{2}} b(z)\\
             \sqrt{2}\mu b(z) & \lambda c(z)
              \end{array}
         \right).
\end{align}
In order to study the eigenvalue $1$ of $G_{\rm e}(z)$ we calculate
the  determinant of $G_{\rm e}(z)-{\rm I}$.
 \begin{lemma}\label{FF}
We have
\begin{align}\label{factorization}
\mathrm{det}(G_{\rm e}(z)-{\rm I})=
\delta_r(\lambda,\mu; z) %cdot
\delta_c(\lambda;z),
\end{align}
where
\begin{align}
\label{sora} &\delta_r(\lambda,\mu;z)= \left\{  \begin{array}{ll}
 \big(1-\mu a(z)\big)\Big\{1-\lambda\big(c(z)+(n-1)d(z)\big)\Big\}-n  \lambda\mu
b^2(z),&n\geq2\\
 \big(1-\mu a(z)\big)(1-\lambda c(z))-\lambda\mu
b^2(z),&n=1,
\end{array}\right.\\
& \delta_c(\lambda;z)= \left\{
\begin{array}{ll}
\big\{\lambda(c(z)-d(z))-1\big\}^{n-1},&n\geq 2\\
1,&n=1.
\end{array}
\right.
\end{align}
\end{lemma}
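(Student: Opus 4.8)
The plan is to compute $\det(G_{\rm e}(z)-{\rm I})$ directly from the explicit matrix forms \eqref{G=AB} and \eqref{G=AB1}, exploiting the fact that, apart from the distinguished $0$-th row and column, the matrix is completely symmetric under permutations of the indices $1,\dots,n$ — this symmetry is inherited from the invariance of $E(p)$ under permutation of its arguments, noted before \eqref{s(z)}, which is exactly what forces $a,b,c,d$ to be index-independent. Writing $G_{\rm e}(z)-{\rm I}$ in $1+n$ block form, the lower-right $n\times n$ block is
\[
\mathcal B=(\lambda c(z)-1)\,{\rm I}_n+\lambda d(z)(J-{\rm I}_n)=(\lambda(c(z)-d(z))-1)\,{\rm I}_n+\lambda d(z)\,J,
\]
where $J=\mathbf 1\mathbf 1^{\top}$ is the all-ones matrix and $\mathbf 1=(1,\dots,1)^{\top}\in\mathbb C^n$. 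This is the key structural observation: $\mathcal B$ is a scalar multiple of the identity plus a rank-one term, so its eigenstructure is explicit — $\mathbf 1$ is an eigenvector with eigenvalue $\lambda(c(z)+(n-1)d(z))-1$, while the $(n-1)$-dimensional orthogonal complement of $\mathbf 1$ is an eigenspace with eigenvalue $\lambda(c(z)-d(z))-1$, which already anticipates the factor $\delta_c(\lambda;z)=(\lambda(c(z)-d(z))-1)^{n-1}$.

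To convert this into the full factorization I would use elementary row and column operations rather than a Schur-complement argument, so as to avoid any invertibility hypothesis on $\mathcal B$ (the factorization is an identity valid even where $\mathcal B$ is singular). First, for $i=2,\dots,n$ I subtract row $1$ from row $i$; since these rows share the column-$0$ entry $\sqrt2\,\mu b(z)$ and differ only through the diagonal/off-diagonal entries of $\mathcal B$, the $i$-th row collapses to just two nonzero entries, namely $-(\lambda(c(z)-d(z))-1)$ in column $1$ and $+(\lambda(c(z)-d(z))-1)$ in column $i$. Factoring the common scalar $\lambda(c(z)-d(z))-1$ out of these $n-1$ rows produces exactly $\delta_c(\lambda;z)$. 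Next I add columns $2,\dots,n$ to column $1$; this clears the column-$1$ entries of the modified rows (turning them into the standard basis vectors supported at column $i$), while changing the $(0,1)$ and $(1,1)$ entries to $n\,\frac{\lambda}{\sqrt2}\,b(z)$ and $\lambda(c(z)+(n-1)d(z))-1$ respectively. The matrix is then block upper triangular with an $(n-1)\times(n-1)$ identity block in the corner, so its determinant equals that of the surviving $2\times2$ block formed by rows and columns $0,1$.

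Evaluating that $2\times2$ determinant gives
\[
(\mu a(z)-1)(\lambda(c(z)+(n-1)d(z))-1)-n\lambda\mu\,b^2(z),
\]
and rewriting $(\mu a-1)(\,\cdot\,-1)=(1-\mu a)(1-\,\cdot\,)$ shows this is precisely $\delta_r(\lambda,\mu;z)$ for $n\ge2$; multiplying by the previously extracted $\delta_c(\lambda;z)$ yields \eqref{factorization}. The case $n=1$ is immediate, since \eqref{G=AB1} is already $2\times2$: its determinant is $(1-\mu a(z))(1-\lambda c(z))-\lambda\mu\,b^2(z)=\delta_r(\lambda,\mu;z)$, and $\delta_c\equiv1$. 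I expect the only delicate point to be bookkeeping rather than anything conceptual: tracking the signs and the factor $n$ correctly through the column operation, and confirming that the block-triangular reduction contributes sign $+1$. The rest follows mechanically from the rank-one structure of $\mathcal B$.
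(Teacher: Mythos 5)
Your proposal is correct: the row/column reduction is carried out accurately, the extracted factor $(\lambda(c(z)-d(z))-1)^{n-1}$ is exactly $\delta_c(\lambda;z)$, the surviving $2\times 2$ determinant $(\mu a-1)(\lambda(c+(n-1)d)-1)-n\lambda\mu b^2$ equals $\delta_r(\lambda,\mu;z)$ after the sign rewrite, and the $n=1$ case is immediate. The paper dismisses this as ``a straightforward computation,'' so your argument is the same direct computation the authors have in mind, merely written out in full.
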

\begin{proof}
It is a straightforward computation.
\end{proof}
By the factorization \eqref{factorization} we shall study zeros of $
\delta_r(\lambda,\mu; z)$ and $\delta_c(\lambda;z)$ separately to
see eigenvalues and resonances of $H_{\lambda\mu}^{\rm e}$. To see
this we introduce algebraic relations used to estimate zeros of
$\delta_r(\lambda,\mu; z)$ and $\delta_c(\lambda;z)$. Below  we
shall show the list of formulas of coefficients $a(z)$, $b(z)$, etc.
We set
\begin{align}\label{Notation}
\alpha(z)&=\left\{\begin{array}{ll}
 c(z)+(n-1)d(z),&n\geq 2\\
 c(z), & n=1,
 \end{array}
 \right.\\
 \gamma(z)&=  a(z)\alpha(z)-nb^2(z).
\end{align}

\begin{lemma}\label{alpha - b} For any $z<0$, the relations below hold:
\begin{align*}
&a(z)-b(z)=\frac{1}{n}+\frac{z}{n}a(z),\\
&\alpha(z)=(n-z)b(z),\\
&\gamma(z)= b(z).
\end{align*}
\end{lemma}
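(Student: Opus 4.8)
The plan is to reduce all three identities to the single elementary fact that the dispersion relation can be written as $E(p) = n - \sum_{i=1}^n \cos p_i$, together with the two normalizations $\frac{1}{(2\pi)^n}\int_{\mathbb{T}^n} 1\,dp = 1$ and $\frac{1}{(2\pi)^n}\int_{\mathbb{T}^n}\cos p_j\,dp = 0$. The common device is the trivial decomposition $\frac{g(p)E(p)}{E(p)-z} = g(p) + z\,\frac{g(p)}{E(p)-z}$, which for $g\equiv 1$ and $g=\cos p_j$ converts products of $E(p)$ against the integrands of $a,b,c,d$ into the quantities appearing in \eqref{Notation}. All integrals converge for $z<0$ since $E(p)\geq 0$ there.

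First I would prove $a(z)-b(z) = \frac1n + \frac zn a(z)$. By the permutation symmetry of $E$ one has, for each fixed $j$,
\[
n\big(a(z)-b(z)\big) = \frac{1}{(2\pi)^n}\int_{\mathbb{T}^n}\frac{\sum_{i=1}^n(1-\cos p_i)}{E(p)-z}\,dp = \frac{1}{(2\pi)^n}\int_{\mathbb{T}^n}\frac{E(p)}{E(p)-z}\,dp,
\]
and the decomposition with $g\equiv 1$ gives $\frac{E(p)}{E(p)-z} = 1 + \frac{z}{E(p)-z}$, whose integral is $1 + z\,a(z)$. Dividing by $n$ yields the claim.

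Next, for $\alpha(z) = (n-z)b(z)$ I would multiply the integrand of $b(z)$ by $E(p)$: since $\cos p_j\,E(p) = n\cos p_j - \cos^2 p_j - \sum_{i\neq j}\cos p_i\cos p_j$, integrating $\frac{\cos p_j\,E(p)}{E(p)-z}$ produces exactly $nb(z) - c(z) - (n-1)d(z) = nb(z) - \alpha(z)$, the cross terms collapsing so that the $n=1$ case reads $b(z)-c(z)$ with $\alpha(z)=c(z)$. On the other hand the decomposition with $g=\cos p_j$ gives $\frac{\cos p_j\,E(p)}{E(p)-z} = \cos p_j + z\,\frac{\cos p_j}{E(p)-z}$, whose integral is $z\,b(z)$ because $\int_{\mathbb{T}^n}\cos p_j\,dp = 0$. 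Equating the two evaluations gives $nb(z) - \alpha(z) = z\,b(z)$, i.e. $\alpha(z) = (n-z)b(z)$.

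Finally, $\gamma(z)=b(z)$ requires no new integral: it is a purely algebraic consequence of the first two. Substituting $\alpha(z) = (n-z)b(z)$ into $\gamma(z) = a(z)\alpha(z) - nb^2(z)$ and factoring out $b(z)$ leaves $\gamma(z) = b(z)\big[(n-z)a(z) - nb(z)\big]$, and the first relation, rearranged as $(n-z)a(z) - nb(z) = 1$, shows the bracket equals $1$. I expect no genuine obstacle; the only points demanding care are tracking the normalization constant $(2\pi)^{-n}$ and confirming that the $n\geq 2$ and $n=1$ cases of $\alpha$ are handled uniformly by the same computation, the cross terms $\sum_{i\neq j}$ simply being empty when $n=1$.
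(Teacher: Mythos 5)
Your proof is correct and follows essentially the same route as the paper: both arguments rest on the permutation symmetry of $E$, the identity $\sum_{i=1}^n\cos p_i=n-E(p)$, and the splitting $\frac{E(p)}{E(p)-z}=1+\frac{z}{E(p)-z}$ (the paper writes the $\alpha$ computation as the decomposition $n-E=(z-E)+(n-z)$ inside the integrand, which is the same identity you obtain by evaluating $\int\cos p_j\,E(p)/(E(p)-z)\,dp$ in two ways). Your explicit algebra for $\gamma(z)=b(z)$, using $(n-z)a(z)-nb(z)=1$, is exactly the step the paper leaves to the reader.
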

\begin{proof}
We directly see that
\begin{align*}
&a(z)-b(z)=\frac{1}{(2\pi)^n} \int_{\mathbb{T} ^n} \frac{(1-\cos
p_1)}{E(p)-z} dp=\frac{1}{n} \frac{1}{(2\pi)^n} \int_{\mathbb{T} ^n}
\frac{
\sum_{j=1}^n (1-\cos p_j) }{E(p)-z}dp\\
&
%=\frac{1}{n} \frac{1}{(2\pi)^n} \int_{\mathbb{T} ^n}\frac{(E(z)-z+z) dp }{E(p)-z}
=\frac{1}{n} \frac{1}{(2\pi)^n} \int_{\mathbb{T} ^n}dp +\frac{z}{n}
\int_{\mathbb{T} ^n} \frac{1 }{E(p)-z}dp
=\frac{1}{n}+\frac{z}{n}a(z),
\end{align*}
and
\begin{align*}
   \alpha(z)
    %&= \frac{1}{(2\pi)^{n}} \int_{\mathbb{T} ^n} \frac{\cos p_1\sum_{j=1}^n \cos p_j dp }{E(p)-z}\\
=\frac{1}{(2\pi)^{n}}\int_{\mathbb{T} ^n} \frac{\cos p_1(z-E(p))
 }{E(p)-z}dp +\frac{n-z}{(2\pi)^{n}} \int_{\mathbb{T} ^n} \frac{\cos
p_1  }{E(p)-z}dp=(n-z)b(z).
\end{align*}
From these  we can also get  the third equality of the lemma.
\end{proof}

\begin{lemma}\label{LemMon}
Functions $a(z)$,
 $\alpha(z)$,  $\gamma(z)$,  $b(z)$,  $c(z)-d(z)$ and
 $s(z)$ are monotonously increasing and positive  in
 $(-\infty,0]$.
 Moreover, their limits tend to zero as $z$ tends to $-\infty$.
\end{lemma}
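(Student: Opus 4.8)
The plan is to reduce every assertion to the elementary observation that, for $z_1<z_2\le 0$ and almost every $p$, one has $0<\frac{1}{E(p)-z_1}<\frac{1}{E(p)-z_2}$. Consequently, any integral of a \emph{nonnegative} weight against $1/(E(p)-z)$ is automatically positive, strictly increasing in $z$, and tends to $0$ as $z\to-\infty$ by dominated convergence with the uniform bound $0<1/(E(p)-z)\le 1/|z|$. For $a(z)$ and $s(z)$ this applies verbatim, since their integrands $1$ and $\sin^2 p_j$ are nonnegative; the only real work is to recast $\alpha(z)$, $c(z)-d(z)$, $b(z)$ and $\gamma(z)$ as integrals of nonnegative weights, either directly or through Lemma \ref{alpha - b}.

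First I would record two squaring identities. Expanding $\big(\sum_{k=1}^n\cos p_k\big)^2$ and using the symmetry of $E$ in its arguments (so that each $\tfrac{1}{(2\pi)^n}\int\cos^2 p_k/(E-z)\,dp$ equals $c(z)$ and each $\tfrac{1}{(2\pi)^n}\int\cos p_k\cos p_l/(E-z)\,dp$ with $k\ne l$ equals $d(z)$) gives, for all $n\ge1$,
\begin{equation*}
\alpha(z)=\frac{1}{n}\cdot\frac{1}{(2\pi)^n}\int_{\mathbb{T}^n}\frac{\big(\sum_{k=1}^n\cos p_k\big)^2}{E(p)-z}\,dp=\frac{1}{n(2\pi)^n}\int_{\mathbb{T}^n}\frac{(n-E(p))^2}{E(p)-z}\,dp,
\end{equation*}
where for $n=1$ the term in $d$ is absent and $\alpha=c$. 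Similarly, for $n\ge2$ and any fixed $i\ne j$,
\begin{equation*}
c(z)-d(z)=\frac{1}{2(2\pi)^n}\int_{\mathbb{T}^n}\frac{(\cos p_i-\cos p_j)^2}{E(p)-z}\,dp.
\end{equation*}
Both integrands are nonnegative, so the reduction above shows at once that $\alpha$ and $c-d$ are positive, strictly increasing on the relevant interval, and vanish at $-\infty$.

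It remains to treat $b(z)$ and $\gamma(z)$, whose defining integrands $\cos p_j/(E-z)$ carry an indefinite sign; this is the genuine obstacle, and I would sidestep it via Lemma \ref{alpha - b} rather than attacking the integral directly. That lemma gives $\gamma(z)=b(z)$ and $\alpha(z)=(n-z)b(z)$, hence
\begin{equation*}
b(z)=\gamma(z)=\frac{\alpha(z)}{\,n-z\,}.
\end{equation*}
Since $n-z\ge n>0$ on $(-\infty,0]$ and $z\mapsto 1/(n-z)$ is itself positive and strictly increasing, $b=\gamma$ is a product of two positive, strictly increasing functions and is therefore positive and strictly increasing; moreover $b\to0$ as $z\to-\infty$ because $\alpha\to0$ while $n-z\to+\infty$.

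Two points deserve care. Monotonicity I would argue through the pointwise inequality for $1/(E-z)$ rather than by differentiating under the integral, since for $n=3,4$ the formal derivative $\int 1/(E-z)^2\,dp$ diverges at $z=0$; the pointwise comparison is insensitive to this and stays valid up to and including the endpoint $z=0$, where, by Lemma \ref{Incl L2}, the integrals defining $a$, $\alpha$ and $c-d$ converge for $n\ge3$. The crux of the whole lemma is thus the passage $b=\alpha/(n-z)$: it converts the single sign-indefinite quantity into a quotient of manifestly controlled ones, after which the two elementary squaring identities finish everything.
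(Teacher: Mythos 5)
Your proof is correct, and for the sign-indefinite quantity $b(z)=\gamma(z)$ it takes a genuinely different route from the paper. For $a$, $s$, $\alpha$ and $c-d$ you do exactly what the paper does: exhibit a nonnegative integrand against $1/(E(p)-z)$ (via the squaring identities $\alpha(z)=\tfrac{1}{n(2\pi)^n}\int(\sum_k\cos p_k)^2/(E(p)-z)\,dp$ and $c(z)-d(z)=\tfrac{1}{2(2\pi)^n}\int(\cos p_i-\cos p_j)^2/(E(p)-z)\,dp$) and invoke the pointwise monotonicity $0<1/(E(p)-z_1)<1/(E(p)-z_2)$ together with the bound $1/(E(p)-z)\le 1/|z|$ for the limit at $-\infty$. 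The divergence is in how the two arguments handle $b=\gamma$: the paper symmetrizes and writes
\begin{equation*}
\gamma(z)=b(z)=\frac{1}{2n(2\pi)^{2n}}\int_{\mathbb{T}^n\times\mathbb{T}^n}\frac{\bigl(\sum_{j}(\cos p_j-\cos q_j)\bigr)^2}{(E(p)-z)(E(q)-z)}\,dp\,dq,
\end{equation*}
a double integral of a nonnegative weight against the product $1/((E(p)-z)(E(q)-z))$, which is again pointwise positive and increasing in $z$; you instead use Lemma \ref{alpha - b} to write $b=\gamma=\alpha/(n-z)$ and observe that a product of two positive strictly increasing functions is positive and strictly increasing, with the limit $0$ at $-\infty$ coming for free. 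Both are valid. Your route is more economical (no new identity to verify beyond what Lemma \ref{alpha - b} already supplies), while the paper's double-integral representation is self-contained and is of a kind reused elsewhere (the same symmetrization trick reappears in the proofs of Lemmas \ref{as<b} and \ref{lem asymp}), so the authors get it essentially for free. Your explicit remark that one should argue via the pointwise comparison rather than by differentiating under the integral (since $\int dp/(E(p)-z)^2$ diverges at $z=0$ for $n=3,4$) is a point the paper glosses over and is worth keeping.
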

\begin{proof}
We have the representations of $a(z)$, $\gamma(z)=b(z)$ and $s(z)$
by their definitions as
\begin{align}
%& a(z)=  \frac{1}{(2\pi)^{n}} \int_{\mathbb{T}^n}  \frac{1 }{E(p)-z}dp, \\
& \alpha(z)=  \frac{1}{n}\frac{1}{(2\pi)^{n}} \int_{\mathbb{T}^n}
\frac{ \big ( \sum_{i=1}^n  \cos p_i \big)^2
}{E(p)-z}dp, \\
 &\gamma(z)=b(z)= \frac{1}{2n(2\pi)^{2n}}\int_{\mathbb{T}^n\times \mathbb{T}^n} \frac{ \big ( \sum_{j=1}^n ( \cos p_j-\cos q_j    ) \big)^2 }{(E(p)-z)(E(q)-z)}dpdq,\\
&c(z)-d(z)= \frac{1}{2}\frac{1}{(2\pi)^{n}} \int_{\mathbb{T}^n}
\frac{ \big (
 \cos p_1-\cos p_2 )^2 }{E(p)-z}dp, \\
& \label{sz}s(z)=  \frac{1}{(2\pi)^{n}} \int_{\mathbb{T}^n}  \frac{
\sin^2p_1}{E(p)-z}dp .
\end{align}
Indeed, for any fixed $p\in \mathbb{T}^n$, all the integrands %the functions of under integral
are positive and  monotonously increasing as functions of $z$, and
we complete the proof.
\end{proof}
Note that in Lemma \ref{LemMon} $c(z)-d(z)$ is considered only in
the case of $n\geq2$.
\begin{lemma}\label{as<b}
The following relations hold:
\begin{align}%\label{ineq}
a(z)s(z)=b(z), & \quad \mbox{} n=1,\, z< 0,\nonumber\\
a(z)s(z)<b(z), & \quad \mbox{} n=2,\, z< 0,\nonumber\\
a(z)s(z)<b(z), & \quad \mbox{} n\geq 3, \, z\leq0,\nonumber\\
\label{s=1} c(z)-d(z)<s(z),&\quad   n\geq 2,\,z\leq 0.
\end{align}
\end{lemma}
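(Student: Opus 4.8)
The plan is to establish the four statements in the order $n=1$, then $as<b$ for $n\ge2$, and finally $c-d<s$, reducing the last one to a two–dimensional base case.

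\emph{The case $n=1$.} Here $E(p)=1-\cos p$, so $E(p)-z=A-\cos p$ with $A:=1-z>1$. I would use the elementary evaluation $\frac1{2\pi}\int_{-\pi}^{\pi}(A-\cos p)^{-1}dp=(A^2-1)^{-1/2}$ together with the partial–fraction identities $\frac{\cos p}{A-\cos p}=\frac{A}{A-\cos p}-1$ and $\frac{\sin^2p}{A-\cos p}=(A+\cos p)-\frac{A^2-1}{A-\cos p}$. Integrating over $\mathbb T$ gives $b(z)=A\,a(z)-1$ and $s(z)=A-(A^2-1)a(z)=A-1/a(z)$, the last step using $a(z)=(A^2-1)^{-1/2}$. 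Hence $a(z)s(z)=A\,a(z)-1=b(z)$.

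\emph{$as<b$ for $n\ge2$.} I would disintegrate the torus integral in the variable $p_1$. For fixed $\hat p=(p_2,\dots,p_n)$ set $\zeta=\zeta(\hat p)=z-\sum_{j=2}^n(1-\cos p_j)\le z\le0$, so that $E(p)-z=(1-\cos p_1)-\zeta$. Writing $\langle g\rangle=(2\pi)^{-(n-1)}\int_{\mathbb T^{n-1}}g\,d\hat p$ for the average over $\hat p$ and integrating $p_1$ first, the definitions give $a(z)=\langle a_1(\zeta)\rangle$, $b(z)=\langle b_1(\zeta)\rangle$, $s(z)=\langle s_1(\zeta)\rangle$, where $a_1,b_1,s_1$ denote the one–dimensional quantities at parameter $\zeta$. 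The case $n=1$ supplies the pointwise identity $b_1(\zeta)=a_1(\zeta)s_1(\zeta)$, so $b(z)=\langle a_1s_1\rangle$. Since the integrands $(1-\cos p_1-\zeta)^{-1}$ and $\sin^2p_1\,(1-\cos p_1-\zeta)^{-1}$ increase in $\zeta$, both $a_1(\zeta)$ and $s_1(\zeta)$ are strictly increasing (cf. Lemma \ref{LemMon}), hence similarly ordered as functions of $\hat p$; Chebyshev's correlation inequality then yields $\langle a_1s_1\rangle>\langle a_1\rangle\langle s_1\rangle$, i.e. $b(z)>a(z)s(z)$. The inequality is strict because $\zeta(\hat p)$ is non-constant for $n\ge2$, and all three averages are finite (for $z=0$, $n\ge3$, by Lemma \ref{Incl L2}). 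This disposes of both the $n=2$ and $n\ge3$ lines.

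\emph{$c-d<s$ for $n\ge2$, and the obstacle.} Since $\sin^2p_1=1-\cos^2p_1$ one has $s=a-c$, so the assertion is equivalent to $a(z)-2c(z)+d(z)>0$, i.e. to $\int_{\mathbb T^n}(1-\cos^2p_1-\cos^2p_2+\cos p_1\cos p_2)\,d\nu>0$ with $d\nu=(2\pi)^{-n}(E-z)^{-1}dp$. Integrating out $p_3,\dots,p_n$, this $n$–dimensional quantity becomes an average over $(p_3,\dots,p_n)$ of the two–dimensional expression $a-2c+d$ at the effective parameter $z_{\mathrm e}=z-\sum_{j\ge3}(1-\cos p_j)\le0$, so it suffices to prove $a(z)-2c(z)+d(z)>0$ in dimension two for $z<0$. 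This two–dimensional base case is the heart of the matter and the only step I do not expect to be routine: it is not a formal consequence of the linear relations of Lemma \ref{alpha - b} (which do not fix the sign of $a-2c+d$), nor of $as<b$ (which bounds $c-d$ from the wrong side). Indeed, in dimension two one checks $b<c$ and $a-2c+d=2(b-c)+\int_{\mathbb T^2}E_1E_2(E-z)^{-1}\frac{dp}{(2\pi)^2}$ (with $E_i=1-\cos p_i$), so the positive last term must overcome the negative $2(b-c)$; the surviving positivity is only of order $|z|^{-3}$ as $z\to-\infty$. My plan for the finish is to fiber once more over $p_1$ and reduce to the explicit scalar inequality
\[
\int_{-1}^{1}\Big[(3A-C)+\frac{CA-3A^2+1}{\sqrt{A^2-1}}\Big]\frac{dt}{\pi\sqrt{1-t^2}}>0,\qquad A=C-t,\ C=2-z,
\]
to be proved by a direct estimate: the bracket changes sign (positive near $t=1$, negative near $t=-1$), and the hard part is to show that the contribution of the endpoint $t\approx1$, where the integrand is positive and the weight $(\pi\sqrt{1-t^2})^{-1}$ is large, outweighs the rest.
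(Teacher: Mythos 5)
Your treatment of the first three relations is correct and is essentially the paper's own argument: the closed form $a(z)=((1-z)^2-1)^{-1/2}$ plus the partial-fraction identities give $b=Aa-1$ and $s=A-(A^2-1)a$ for $n=1$, and for $n\geq 2$ the fibering over $p_1$ turns $b(z)$ into the diagonal average $\langle a_1(\zeta)s_1(\zeta)\rangle$ and $a(z)s(z)$ into $\langle a_1\rangle\langle s_1\rangle$; the paper proves the resulting covariance is positive by writing out $\bigl(F(\tilde p)-F(\tilde q)\bigr)\bigl(G(\tilde p)-G(\tilde q)\bigr)$ as an explicit square times positive integrals, which is exactly your Chebyshev correlation inequality since both fiber integrals are increasing functions of the single scalar $\zeta(\hat p)$.

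The fourth relation \eqref{s=1} is, however, not proven in your proposal: you reduce $c-d<s$ to the positivity of $a-2c+d$ in dimension two and then to a one-dimensional integral whose integrand changes sign, and you explicitly leave open the decisive estimate that the positive contribution near $t\approx 1$ dominates. That is the entire content of the inequality --- the numerator $1-\cos^2p_1-\cos^2p_2+\cos p_1\cos p_2$ is genuinely negative on part of the torus (e.g.\ at $p_1=0$, $p_2=\pi$), so some cancellation mechanism must be exhibited, and none is. (There is also a secondary technical point: at $z=0$ with effective parameter $z_{\mathrm e}=0$ the individual two-dimensional integrals $a_2,c_2,d_2$ diverge, so the combination must be kept as a single integral.) The paper avoids the sign problem entirely by a different algebraic preparation: writing $c-d=\frac{1}{2(2\pi)^n}\int(\cos p_1-\cos p_2)^2/(E-z)$ and $s=\frac{1}{2(2\pi)^n}\int(\sin p_1-\sin p_2)^2/(E-z)$ and substituting $u=(p_1-p_2)/2$, $t=(p_1+p_2)/2$, one gets
\begin{align*}
c(z)-d(z)-s(z)=-\frac{2}{(2\pi)^n}\int d p_3\cdots dp_n\int_{\mathbb{T}}4\sin^2 u\,du\int_{\mathbb{T}}\frac{\cos 2t}{A-2\cos t\cos u}\,dt,
\end{align*}
and the innermost integral is positive by the classical evaluation $\int_{-\pi}^{\pi}\frac{\cos kt}{A-B\cos t}dt=\frac{2\pi}{\sqrt{A^2-B^2}}\bigl(\frac{A-\sqrt{A^2-B^2}}{B}\bigr)^{k}>0$ for $A>|B|$. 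If you want to salvage your route, you should either import this half-angle device or actually carry out the endpoint estimate you postponed; as written, the lemma's last (and hardest) assertion rests on an unverified claim.
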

\begin{proof}
  See Appendix \ref{ineq lem}.
\end{proof}

\begin{lemma}\label{lem asymp}
The function $ {a(z)}/{b(z)} $ is monotonously decreasing in
$(-\infty,0]$, and  there exist   limits:
\begin{align}
&\label{lim 1} \lim_{z\to -\infty}\frac{a(z)}{b(z)}=+\infty,\\
&\label{lim 2} \lim_{z\to  0-}\frac{a(z)}{b(z)}= \left\{
                                             \begin{array}{ll}
                                               1, &  n=1,2, \\
                                               \frac{a(0)}{b(0)}, &  n\geq 3.
                                                                                             \end{array}
                                           \right.
\end{align}
\end{lemma}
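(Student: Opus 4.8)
The plan is to study the reciprocal $r(z):=b(z)/a(z)$, prove that it is strictly increasing with the reciprocals of the claimed limits, and then invert. The engine is a single scalar identity extracted from Lemma~\ref{alpha - b}. First I would divide the relation $a(z)-b(z)=\tfrac1n+\tfrac zn a(z)$ by $a(z)>0$ (positivity from Lemma~\ref{LemMon}) to obtain
\begin{equation}\label{ratio-id}
r(z)=\frac{b(z)}{a(z)}=1-\frac zn-\frac{1}{n\,a(z)},\qquad\text{equivalently}\qquad n\,r(z)=(n-z)-\frac{1}{a(z)}.
\end{equation}
Thus the entire lemma is reduced to the behaviour of the single function $a(z)$, for which $a(z)=\int_{\mathbb{T}^n}\frac{d\mu}{E(p)-z}$ and $a'(z)=\int_{\mathbb{T}^n}\frac{d\mu}{(E(p)-z)^2}$ with $d\mu:=(2\pi)^{-n}dp$ a probability measure on $\mathbb{T}^n$; differentiation under the integral sign is legitimate for $z<0$.

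For monotonicity I would differentiate \eqref{ratio-id}, getting $r'(z)=-\tfrac1n+\tfrac{a'(z)}{n\,a(z)^2}$, so that $r'(z)>0$ is equivalent to $a'(z)>a(z)^2$. This last inequality is precisely the Cauchy--Schwarz estimate $\big(\int (E-z)^{-1}d\mu\big)^2\le\int (E-z)^{-2}d\mu\cdot\int 1\,d\mu$, strict because $(E(p)-z)^{-1}$ is non-constant. Hence $r$ is strictly increasing, and $a/b=1/r$ is strictly decreasing on $(-\infty,0)$ (and on $(-\infty,0]$ when $n\ge3$, where $a,b$ are finite at $0$ and $r$ is continuous there, so strictness extends to the endpoint).

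For the limit $z\to0-$ I would split on dimension. When $n\ge3$, Lemma~\ref{Incl L2}(a) gives that $a,b$ are finite at $0$ with $b(0)>0$ (Lemma~\ref{LemMon}), so $a/b\to a(0)/b(0)$ by continuity. When $n=1,2$, the singularity $E(p)\approx|p|^2$ makes $1/E\notin L^1(\mathbb{T}^n)$, so monotone convergence forces $a(z)\uparrow+\infty$; then in \eqref{ratio-id} both $z/n\to0$ and $1/(n\,a(z))\to0$, giving $r(z)\to1$ and hence $a/b\to1$.

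The main obstacle is the limit $z\to-\infty$, where \eqref{ratio-id} presents an $\infty-\infty$ cancellation: I must show $(n-z)-1/a(z)\to0$, i.e. $a(z)=\tfrac{1}{n-z}+o(|z|^{-2})$, which is delicate because it hinges on the first two moments of $E$. The first moment is immediate, $\int E\,d\mu=n$, since $\int\cos p_j\,d\mu=0$ summed over $j=1,\dots,n$. Jensen's inequality for the convex map $x\mapsto 1/x$ yields the lower bound $a(z)\ge\tfrac{1}{n-z}$, and for the matching upper bound I would use $\tfrac{1}{1+x}\le1-x+x^2$ for $x\ge0$ with $x=E(p)/|z|$ to obtain $a(z)=\tfrac{1}{n-z}+O(|z|^{-3})$. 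Substituting into \eqref{ratio-id} gives $(n-z)a(z)=1+O(|z|^{-2})$, hence $1/a(z)=(n-z)+O(|z|^{-1})$ and $n\,r(z)=O(|z|^{-1})\to0$; since $r>0$ throughout, this means $a/b=1/r\to+\infty$. This two-order moment cancellation is the only genuinely quantitative step, everything else being soft consequences of \eqref{ratio-id}.
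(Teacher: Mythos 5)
Your proof is correct, but it takes a genuinely different route from the paper's on the two nontrivial points. For monotonicity the paper computes the Wronskian $a'(z)b(z)-a(z)b'(z)$ directly, symmetrizes it over the coordinates, and exhibits it as $-\frac{1}{n(2\pi)^{2n}}\int\!\!\int \big(\sum_j(\cos p_j-\cos q_j)\big)^2\,(E(p)-z)^{-2}(E(q)-z)^{-2}\,dp\,dq<0$; you instead use the identity from Lemma~\ref{alpha - b} to eliminate $b$ entirely, reducing the claim to $a'(z)>a(z)^2$, which is Cauchy--Schwarz for the probability measure $(2\pi)^{-n}dp$. Your reduction is more economical (it never differentiates $b$) and the strictness is transparent. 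For the limit $z\to-\infty$ the paper argues $a(z)=O(|z|^{-1})$, $b(z)=O(z^{-2})$, hence $a/b=O(|z|)$ --- which, read literally, is only an upper bound and does not by itself force divergence; the intended content is the exact asymptotics $a\sim|z|^{-1}$, $b\sim c\,z^{-2}$ coming from $\int\cos p_1\,dp=0$. Your Jensen lower bound $a(z)\ge(n-z)^{-1}$ paired with the $\tfrac{1}{1+x}\le 1-x+x^2$ upper bound pins down $a(z)=\tfrac{1}{n-z}+O(|z|^{-3})$ and makes the $\infty-\infty$ cancellation in $nr(z)=(n-z)-1/a(z)$ rigorous; this is in fact tighter than what the paper writes. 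The treatment of $z\to0-$ (continuity for $n\ge3$; $a(z)\to\infty$ plus the same algebraic identity for $n=1,2$) coincides with the paper's. One cosmetic caveat: at the endpoint $z=0$ for $n=3,4$ the derivative $a'(0)=\int E^{-2}d\mu$ diverges, so your Cauchy--Schwarz criterion is not available \emph{at} $0$; strict monotonicity on $(-\infty,0]$ should there be deduced from strict monotonicity on the open interval together with continuity of $a/b$ up to $0$, which is how you in effect phrase it.
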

\begin{proof}
  See Appendix \ref{app lem assymp}.
\end{proof}
\subsubsection{Zeros of $\delta_r(\lambda,\mu;z)$}
We extend $\delta_r(\lambda,\mu;\cdot)$ and
$\delta_c(\lambda;\cdot)$, and discuss zeros of them to specify the
eigenvalue of $H_{\lambda\mu}^{\mathrm{e}} $. Let $z\in (-\infty,
0)$. Applying notation in \eqref{Notation}, we describe
$\delta_r(\lambda,\mu;z)$ as
\begin{align}
  \delta_r(\lambda,\mu;z)=\gamma(z)\mathcal{H}_z(\lambda,\mu) \label{del Hyper}
\end{align}
where
\begin{align}
  \mathcal{H}_z(\lambda,\mu)=
\Big(\lambda-\frac{a(z)}{\gamma(z)}\Big)
\Big(\mu-\frac{\alpha(z)}{\gamma(z)}\Big)-\frac{a(z)\alpha(z)-\gamma(z)}{\gamma^2(z)}.
\end{align}
or by Lemma \ref{alpha - b}, we have %may describe the last form  as
 \begin{align}\label{hyper}
    \mathcal{H}_z(\lambda,\mu)=\Big(\lambda-\frac{a(z)}{b(z)}\Big)
\Big(\mu-(n-z)\Big)-n.
 \end{align}
Instead of the equation $\delta_r(\lambda,\mu;z)=0$, the relation
\eqref{del Hyper}   allows us   to study the family of rectangular
hyperbola ${\mathfrak{H}_z}$ indexed by
$z\in\left\{\begin{array}{ll} (-\infty, 0)& n=1,2\\
(-\infty,0]& n\geq3
\end{array}
\right.$. i.e. equilateral  hyperbola ${\mathfrak{H}_z}$ on
$(\lambda,\mu)$-plane,  which is defined by
 $$
{\mathfrak{H}_z}=\{(\lambda,\mu)\in {\mathbb R}^2 |
 \mathcal{H}_z(\lambda,\mu)=0\}
 %,\quad z\in(-\infty,0)
 $$
with asymptote
$$
(\lambda_{\infty}(z),\mu_{\infty}(z))=(\frac{a(z)}{b(z)}, n-z).$$
Lemma \ref{lem asymp} implies  that  $\mathcal{H}_z(\lambda,\mu)$
can be extended to $z\in(-\infty,0]$ for any dimension $n\geq1$ as
\begin{align}\label{f1}
\bar{\mathcal{H}}_z(\lambda,\mu)=
\left\{\begin{array}{ll}\mathcal{H}_z(\lambda,\mu), &z<0,\\
(\lambda-X)(\mu-n)-n, &z=0.
\end{array}
\right.
\end{align}
Here $X=1$ for $n=1,2$ and $X=a(0)/b(0)$ for $n\geq 3$. Note that
$${\overline{\mathcal{H}}}_z(0,0)=\frac{a(z)}{b(z)}(n-z)-n=\frac{1}{b(z)}>0
$$ for $z<0$.
We also extend the family of hyperbola ${\mathfrak{H}_z}$, $z\in
(-\infty,0)$, to that of hyperbola $\overline{\mathfrak{H}}_z$
indexed by $z\in (-\infty,0]$ by
%\marginpar{I think $(\lambda,\mu)\in \R\times \R$ not $[0,\infty)\times[0,\infty)$}
 $$
{\overline{\mathfrak{H}}_z}=\{(\lambda,\mu)\in \mathbb{R} \times
\mathbb{R}  |
 {\overline{\mathcal{H}}}_z(\lambda,\mu)=0\}.
 $$
By \eqref{f1} we see that $(\lambda,\mu)\in
{\overline{\mathfrak{H}}_0}$ satisfies the algebraic relation:
\begin{align}\label{f2}
(\lambda-X)(\mu-n)-n=0.
\end{align}
For any $z_1<z_2,$ $z_1,z_2\in (-\infty,0]$, we note that
 the hyperbola $\overline{\mathfrak H}_{z_1}$
{can be moved to  $\overline{\mathfrak H}_{z_2}$ in parallel} by the
vector
$g =\left(\begin{array}{l}\lambda_{\infty}(z_2)-\lambda_{\infty}(z_1)\\
\mu_{\infty}(z_2)-\mu_{\infty}(z_1)\end{array} \right) $ whose
components are positive. See Figure~\ref{f3}.
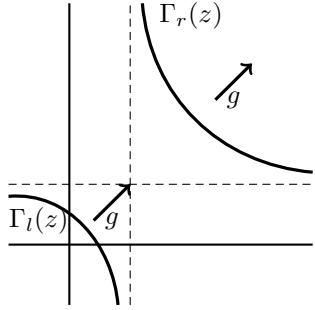
\begin{figure}[t]
\begin{center}
\begin{tikzpicture}[scale=0.8]%%%%%%%%%%%%%%\UTF{2460}\UTF{00E2}\UTF{0080}\UTF{0099}%
\useasboundingbox(-1,-2) rectangle(4,4);
\node[below]at(1.5,-1){$ $};
%\UTF{00E5}\UTF{00BA}§\UTF{00E6}\UTF{0161}\UTF{0099}\UTF{00E8}\UTF{00BB}\UTF{017E}%
\draw[thick](-1,0)--(4,0); \draw[thick](0,-1)--(0,4);

%\UTF{00E8}£\UTF{009C}\UTF{00E5}\UTF{008A}\UTF{00A9}\UTF{00E7}\UTF{00B7}\UTF{009A}%
\draw[densely dashed](1,-1)--(1,4); \draw[densely
dashed](-1,1)--(4,1);
%\draw[thin](2,-1)--(2,4);
%\draw[thin](3,-1)--(3,4);

%\UTF{00E6}\UTF{009B}\UTF{00B2}\UTF{00E7}\UTF{00B7}\UTF{009A}%
\draw[very thick](-1,0.8)to[out=50,in=140,relative](0.8,-1);%\UTF{00E4}\UTF{017E}\UTF{008B}%
\draw[very thick](1.2,4)to[out=320,in=220,relative](4,1.2);%\UTF{00E4}\UTF{017E}\UTF{008A}%

%\UTF{00E7}\UTF{009F}¢\UTF{00E5}\UTF{008D}°
\draw[->,very thick](0.4,0.4)--(1,1); \draw[->,very
thick](2.4,2.4)--(3,3);

\node at(2.7,2.4){$g$}; \node at(0.7,0.4){$g$}; \node
at(-0.5,0.4){$\Gamma_l(z)$}; \node at(2.0,3.8){$\Gamma_r(z)$};
\end{tikzpicture}
\end{center}
\vspace{-1.4cm}\caption{Hyperbola  moves as $z$ approaches to
$-\infty$ from $0$.} \label{f3}
\end{figure}
Let  $\Gamma_l(z)$ (resp. $\Gamma_r(z)$)  denote the left brunch
(resp. the right brunch) of the hyperbola
$\overline{\mathfrak{H}}_z$, i.e.
$$\overline{\mathfrak{H}}_z=
\Gamma_l(z)\cup \Gamma_r(z)\quad \mbox{ and }\quad  \Gamma_l(z)\cap
\Gamma_r(z)=\emptyset.$$ We then see that
 for any $z_2<z_1\leq 0$
 it follows that
\begin{align}\label{sch}
  \Gamma_l(z_1) \cap \Gamma_l(z_2)=\emptyset, \quad
  \Gamma_r(z_1) \cap \Gamma_r(z_2)=\emptyset .
  \end{align}
Let us see the behavior of $\delta_r(\lambda,\mu;z)$ near $z=0$ for
$n=1,2$.
\begin{lemma}\label{lak}
It follows that
\begin{align}\label{a(z) rep}
      a(z)&=\frac{1}{\sqrt{-z} \sqrt{2-z} },\quad n=1,\\
a(z)&=-\frac{\sqrt{2}}{2\pi}\ln(-z)+(\frac{1}{2}-\frac{\sqrt{2}}{\pi})+O(-z),\quad
\mbox{as}\quad z\to 0-,\quad n=2.
\end{align}
\end{lemma}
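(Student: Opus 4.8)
The plan is to reduce both quantities to one-dimensional integrals and then evaluate them, exactly for $n=1$ and asymptotically for $n=2$; write $\varepsilon=-z>0$ throughout. For $n=1$ the only ingredient is the classical evaluation $\int_{-\pi}^{\pi}\frac{dp}{A-\cos p}=\frac{2\pi}{\sqrt{A^{2}-1}}$, valid whenever $A>1$. Since $a(z)=\frac{1}{2\pi}\int_{-\pi}^{\pi}\frac{dp}{(1-z)-\cos p}$ and $A=1-z>1$ for $z<0$, this immediately gives $a(z)=\bigl((1-z)^{2}-1\bigr)^{-1/2}=\bigl((-z)(2-z)\bigr)^{-1/2}$, which is precisely the asserted closed form. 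This case is elementary and presents no difficulty.

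For $n=2$ I would integrate out one angular variable using the same formula. Starting from $a(z)=\frac{1}{(2\pi)^{2}}\int_{\mathbb{T}^{2}}\frac{dp_{1}\,dp_{2}}{(2-z-\cos p_{1})-\cos p_{2}}$ and applying the classical integral in $p_{2}$ with $A=2-z-\cos p_{1}>1$, one obtains
\[
a(z)=\frac{1}{2\pi}\int_{-\pi}^{\pi}\frac{dp_{1}}{\sqrt{(1-z-\cos p_{1})(3-z-\cos p_{1})}}=\frac{1}{\pi}\int_{0}^{\pi}\frac{dp_{1}}{\sqrt{(1-z-\cos p_{1})(3-z-\cos p_{1})}},
\]
the last step by evenness. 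The integrand is uniformly bounded away from $p_{1}=0$, so the entire singular dependence as $z\to0-$ is localized near the origin, where $1-\cos p_{1}=\tfrac{1}{2}p_{1}^{2}+O(p_{1}^{4})$ while the second factor tends to a constant. Splitting $\int_{0}^{\pi}=\int_{0}^{\delta}+\int_{\delta}^{\pi}$ and comparing the first piece with the explicit model integral $\int_{0}^{\delta}\frac{dp}{\sqrt{p^{2}+2\varepsilon}}$ exhibits the logarithmic behaviour, whose coefficient is read off directly from this elementary antiderivative.

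The main obstacle is the constant term, together with control of the $O(-z)$ remainder. I would handle it by the standard subtraction device: remove from the integrand an explicit singular model matching its behaviour at $p_{1}=0$ and at $z=0$, so that the difference extends continuously to $(z,p_{1})=(0,0)$; the limiting difference is then an ordinary convergent integral that can be evaluated in closed form, and adding back the boundary contributions of the model integral assembles the stated constant. As a cross-check one may instead recognize $a(z)$ as a complete elliptic integral of the first kind, $a(z)=\frac{2}{\pi(2-z)}K\!\bigl(\tfrac{2}{2-z}\bigr)$ with modulus $\tfrac{2}{2-z}$, and insert the classical logarithmic expansion of $K$ near modulus $1$. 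Either route is a routine but delicate computation; keeping careful track of the constant and of the $O(-z)$ error in the subtraction is the only place where genuine care is required.
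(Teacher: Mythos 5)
The paper offers no proof of this lemma at all: its ``proof'' is a citation to \cite{LakTil}. So your attempt is necessarily a different route, and the $n=1$ half of it is complete and correct: $a(z)=\frac{1}{2\pi}\int_{-\pi}^{\pi}\frac{dp}{(1-z)-\cos p}=\bigl((1-z)^2-1\bigr)^{-1/2}=\bigl((-z)(2-z)\bigr)^{-1/2}$ is exactly the first display. For $n=2$, however, what you have written is a strategy rather than a proof. The reduction to $a(z)=\frac{1}{2\pi}\int_{-\pi}^{\pi}\bigl((1-z-\cos p_1)(3-z-\cos p_1)\bigr)^{-1/2}dp_1$ and the localization of the singularity at $p_1=0$ are fine, but the constant term and the remainder estimate --- which you yourself single out as the only place requiring genuine care --- are precisely the nontrivial content of the second display, and they are left undone. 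Moreover you never actually state the logarithmic coefficient that your model integral produces, and that omission hides a real problem.

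If you carry either of your routes to the end you will not arrive at the constants in the statement. Your identification $a(z)=\frac{2}{\pi(2-z)}K\bigl(\frac{2}{2-z}\bigr)$ is correct for $E(p)=2-\cos p_1-\cos p_2$, and with $k'^2=1-k^2=\frac{(-z)(4-z)}{(2-z)^2}\sim -z$ the classical expansion $K(k)=\ln(4/k')+O\bigl(k'^2\ln(1/k')\bigr)$ yields $a(z)=-\frac{1}{2\pi}\ln(-z)+\frac{2\ln 2}{\pi}+O\bigl((-z)\ln(-z)\bigr)$; the same coefficient $-\frac{1}{2\pi}$ is what your comparison integral gives, since $\frac{1}{\pi}\int_0^{\delta}\frac{dp}{\sqrt{p^2+2\varepsilon}}=-\frac{1}{2\pi}\ln\varepsilon+O(1)$, and a two-dimensional check via $\frac{1}{(2\pi)^2}\int_{|p|<\delta}\frac{dp}{|p|^2/2+\varepsilon}$ confirms it. This disagrees with the stated $-\frac{\sqrt{2}}{2\pi}\ln(-z)+\bigl(\frac12-\frac{\sqrt{2}}{\pi}\bigr)+O(-z)$ in the coefficient of the logarithm, in the constant, and in the order of the remainder (which is $O\bigl((-z)\ln(-z)\bigr)$, not $O(-z)$). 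Either the constants imported from \cite{LakTil} belong to a different normalization of the dispersion relation than $E(p)=\sum_j(1-\cos p_j)$, or they are misprinted; you must resolve this before investing in the subtraction argument, because your (sound) method proves a statement different from the one displayed. Fortunately only the qualitative facts $\lim_{z\to 0-}za(z)=0$ and $a(z)\to\infty$ are used downstream (Corollary \ref{delta1 con}), so the discrepancy does not propagate, but it should be flagged rather than reproduced.
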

\begin{proof}
The proof  of this lemma can be found  in \cite{LakTil}.
\end{proof}
From this lemma we can see the behaviours of
$\delta_r(\lambda,\mu;z)$ as $z\to 0-$.
\begin{corollary}\label{delta1 con}
It follows that
\begin{align*}
\begin{array}{lll}
(n=1) & \lim_{z\to 0-}\delta_r(\lambda,\mu;z) &= \left\{
\begin{array}{ll}
\infty&(\lambda,\mu)\not\in \overline{\mathfrak H}_{0}\\
1-\mu&(\lambda,\mu)\in \overline{\mathfrak H}_{0}
\end{array}
\right.,\\
& & \\
(n=2) & \lim_{z\to 0-}\delta_r(\lambda,\mu;z) &=
\left\{\begin{array}{ll}
\infty&(\lambda,\mu)\not\in \overline{\mathfrak H}_{0}\\
1-\mu/2&(\lambda,\mu)\in \overline{\mathfrak H}_{0}
\end{array}\right.,\\
& & \\
(n\geq 3) & \lim_{z\to 0-}\delta_r(\lambda,\mu;z) &=
b(0){\overline{\mathcal{H}}}_0(\lambda,\mu).
\end{array}
\end{align*}
\end{corollary}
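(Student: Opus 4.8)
The plan is to read everything off the factored form of $\delta_r$. Combining \eqref{del Hyper} and \eqref{hyper} with the identity $\gamma(z)=b(z)$ from Lemma \ref{alpha - b}, one has, for every $z<0$ and all $n\ge1$,
\[
\delta_r(\lambda,\mu;z)=b(z)\,\mathcal H_z(\lambda,\mu),\qquad
\mathcal H_z(\lambda,\mu)=\Big(\lambda-\tfrac{a(z)}{b(z)}\Big)\big(\mu-(n-z)\big)-n .
\]
The corollary is then governed by the behaviour of the two factors as $z\to0-$. By Lemma \ref{lem asymp} we have $a(z)/b(z)\to X$, and since $\mu-(n-z)\to\mu-n$, the second factor satisfies $\mathcal H_z(\lambda,\mu)\to\overline{\mathcal H}_0(\lambda,\mu)$ for every $n\ge1$. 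The decisive distinction between the dimensions is carried by the first factor: $b(z)$ stays finite for $n\ge3$, while $b(z)\to+\infty$ for $n=1,2$ (Lemma \ref{LemMon} together with the blow-up rates of Lemma \ref{lak}). Thus for $n\ge3$ the limit is a genuine product, whereas for $n=1,2$ we meet an $\infty\cdot(\text{limit})$ expression that must be resolved by a first-order expansion.

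For $n\ge3$ I would simply pass to the limit. The integrands defining $a(z)$ and $b(z)$ are dominated by $1/E(p)\in L^1$ (Lemma \ref{Incl L2}(a)), so $a(z)\to a(0)$ and $b(z)\to b(0)$ are finite, and $a(z)/b(z)\to a(0)/b(0)=X$ by \eqref{lim 2}. Inserting these into the factored form yields $\delta_r(\lambda,\mu;z)\to b(0)\,\overline{\mathcal H}_0(\lambda,\mu)$, which is the third line of the statement.

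For $n=1,2$ (so that $X=1$) I would first record the exact algebraic identity obtained by adding and subtracting $\overline{\mathcal H}_0$ inside $\mathcal H_z$ and then multiplying by $b(z)$:
\[
\delta_r(\lambda,\mu;z)=b(z)\,\overline{\mathcal H}_0(\lambda,\mu)+(\mu-n)\big(b(z)-a(z)\big)+\big(\lambda b(z)-a(z)\big)z .
\]
It remains to evaluate the limits of the last two terms. Lemma \ref{alpha - b} gives $a(z)-b(z)=\tfrac1n+\tfrac{z}{n}a(z)$, while Lemma \ref{lak} yields $z\,a(z)\to0$ as $z\to0-$ (indeed $z\,a(z)=-\sqrt{-z}/\sqrt{2-z}$ for $n=1$, and $z\,a(z)=O\big(z\ln(-z)\big)$ for $n=2$). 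Hence $a(z)-b(z)\to\tfrac1n$, and since $z\,b(z)=z\,a(z)-z\big(a(z)-b(z)\big)$ with $a(z)-b(z)$ bounded, also $z\,b(z)\to0$. Consequently the last term vanishes, the middle term tends to $(\mu-n)\cdot(-\tfrac1n)=1-\tfrac{\mu}{n}$, and
\[
\delta_r(\lambda,\mu;z)=b(z)\,\overline{\mathcal H}_0(\lambda,\mu)+\Big(1-\tfrac{\mu}{n}\Big)+o(1).
\]
If $(\lambda,\mu)\notin\overline{\mathfrak H}_0$ then $\overline{\mathcal H}_0(\lambda,\mu)\ne0$ and, because $b(z)\to+\infty$, the first term dominates the bounded remainder, so $\delta_r$ diverges (to $+\infty$ or $-\infty$ according to $\operatorname{sgn}\overline{\mathcal H}_0(\lambda,\mu)$); if $(\lambda,\mu)\in\overline{\mathfrak H}_0$ then $\overline{\mathcal H}_0(\lambda,\mu)=0$ and the limit is exactly $1-\mu/n$, i.e. $1-\mu$ for $n=1$ and $1-\mu/2$ for $n=2$. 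This gives the first two lines.

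The only delicate point is the pair of limits $z\,a(z)\to0$ and $z\,b(z)\to0$: although both coefficients blow up at the threshold, they do so strictly more slowly than $|z|^{-1}$, and it is precisely this that lets the $\infty\cdot0$ indeterminacy on $\overline{\mathfrak H}_0$ collapse to the finite value $1-\mu/n$. These rates are not available from the monotonicity of Lemma \ref{LemMon} alone; they genuinely require the explicit closed form for $n=1$ and the logarithmic asymptotics for $n=2$ furnished by Lemma \ref{lak}, while the exact constant $\tfrac1n$ in $a(z)-b(z)$ supplied by Lemma \ref{alpha - b} is what pins down the thresholds $1-\mu$ and $1-\mu/2$. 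The off-hyperbola divergence, by contrast, is robust and uses only $b(z)\to+\infty$.
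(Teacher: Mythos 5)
Your proposal is correct and follows essentially the same route as the paper: both factor $\delta_r(\lambda,\mu;z)=b(z)\mathcal H_z(\lambda,\mu)$, split off $b(z)\overline{\mathcal H}_0(\lambda,\mu)$, and use the identity $a(z)-b(z)=\tfrac1n+\tfrac zn a(z)$ of Lemma \ref{alpha - b} together with $za(z)\to 0$, $b(z)\to\infty$, $a(z)/b(z)\to 1$ (Lemmas \ref{lak} and \ref{lem asymp}) to reduce the remainder to $1-\mu/n+o(1)$ for $n=1,2$, passing directly to the limit for $n\ge 3$; your version is in fact slightly cleaner than the paper's, which carries an extra $a(z)/b(z)$ factor and a residual term $\xi$. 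Your remark that off $\overline{\mathfrak H}_0$ the divergence is to $+\infty$ or $-\infty$ according to $\operatorname{sgn}\overline{\mathcal H}_0(\lambda,\mu)$ is a legitimate refinement of the bare ``$\infty$'' in the statement and does not affect its use elsewhere.
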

\begin{proof}
In the case of $n\geq3$ it is trivial to see that $\lim_{z\to
0-}\delta_r(\lambda,\mu;z) =
b(0){\overline{\mathcal{H}}}_0(\lambda,\mu)$. Then we consider cases
of $n=1,2$. We recall that
\begin{align*}
\delta_r(\lambda,\mu;z)=
\gamma(z)\overline{\mathcal{H}}_{z}(\lambda,\mu) =
\gamma(z)\overline{\mathcal{H}}_{0}(\lambda,\mu) +
\gamma(z)(\overline{\mathcal{H}}_{z}(\lambda,\mu)-\overline{\mathcal{H}}_{0}(\lambda,\mu))
\end{align*}
and
$$\gamma(z)=b(z)=a(z)-\frac{1}{n}-\frac{1}{n}za(z).$$
We can also directly see that for $n=1,2$
\begin{align*}
\overline{\mathcal{H}}_{z}(\lambda,\mu)
-\overline{\mathcal{H}}_{0}(\lambda,\mu)
&=\frac{1}{b(z)}(b(z)-a(z))(\mu-n)+z(\lambda-\frac{a(z)}{b(z)})\\
&= -\frac{1}{nb(z)}(1+za(z))(\mu-n)+z(\lambda-\frac{a(z)}{b(z)}).
\end{align*}
Together with them we have
\begin{align*}
\delta_r(\lambda,\mu;z)= (a(z)-\frac{1+za(z)}{n})
\overline{\mathcal{H}}_{0}(\lambda,\mu)+
\frac{a(z)}{b(z)}(1+za(z))(\frac{n-\mu}{n})+\xi,
\end{align*}
where
$$\xi=-za(z)(\lambda-\frac{a(z)}{b(z)})+\frac{1+za(z)}{n}
\left(\frac{(1+za(z))(\mu-n)}{nb(z)}+z(\lambda-\frac{a(z)}{b(z)})\right).$$
By Lemmas \ref{lak} and \ref{lem asymp} it is crucial to see that
$$\lim_{z\to 0-}za(z)=0,\quad
\lim_{z\to 0-}b(z)=\infty,\quad \lim_{z\to 0-}\frac{a(z)}{b(z)}=1$$
and
\begin{align*}
\lim_{z\to 0-}\xi=0,\quad \lim_{z\to
0-}\frac{a(z)}{b(z)}(1+za(z))(\frac{n-\mu}{n})=1-\frac{\mu}{n}
\end{align*}
 for $n=1,2$.
Let $n=1$. Then
\begin{align*}
\delta_r(\lambda,\mu;z)= (a(z)-1-za(z))
\overline{\mathcal{H}}_{0}(\lambda,\mu)+
\frac{a(z)}{b(z)}(1+za(z))({1-\mu})+\xi
\end{align*}
and the corollary  follows for $n=1$. Let $n=2$. In a similar manner
to the case of $n=1$ we have
\begin{align*}
\delta_r(\lambda,\mu;z)= (a(z)-\frac{1+za(z)}{2})
\overline{\mathcal{H}}_{0}(\lambda,\mu)+
\frac{a(z)}{b(z)}(1+za(z))(1-\frac{\mu}{2})+\xi,
\end{align*}
and the corollary  follows for $n=2$. Hence the proof of the
corollary can be derived.
\end{proof}

We define $\bar\delta_r(\lambda,\mu;z)$ for $z\in(-\infty,0]$ by
\begin{align}
\bar\delta_r(\lambda,\mu;z)=\left\{\begin{array}{ll}
\delta_r(\lambda,\mu;z),&z\in(-\infty, 0),\\
\lim_{z\to 0-} \delta_r(\lambda,\mu;z), &z=0.
\end{array}
\right.
\end{align}
From Corollary \ref{delta1 con} we can see that $\bar
\delta_r(\lambda,\mu;z)$ converges to
\begin{align}
\label{del} \bar \delta_r(\lambda,\mu;0)=\left\{
\begin{array}{lll}1-\mu,&n=1, &(\lambda,\mu)\in \overline{\mathfrak H}_0,\\
1-\mu/2,&n=2,&(\lambda,\mu)\in \overline{\mathfrak H}_0,\\
0,&n\geq3,& (\lambda,\mu)\in \overline{\mathfrak H}_0.
\end{array}
\right.
\end{align}
\begin{remark}
We give a remark on \eqref{del}. Let $n=1,2$. If $(\lambda,\mu)\in
\overline{\mathfrak H}_0$, then $(1-\lambda)(1-\mu/n)=1$ is
satisfied by \eqref{f2}, which implies that $1-\mu\not=0$ for $n=1$,
$1-\mu/2\not=0$ for $n=2$.
\end{remark}
We can also show the continuity of $\bar \delta_r(\lambda,\mu;z)$ on
$z$, which is summarised in the lemma below.
\begin{lemma}
It follows that
\begin{description}
\item[($n=1,2$)]
$\bar \delta_r(\lambda,\mu;z)$ is continuous in $z\in(-\infty,0]$
for $(\lambda,\mu)\in \overline{\mathfrak H}_{0}$,
\item[($n\geq3$)]
$\bar \delta_r(\lambda,\mu;z)$ is continuous in $z\in(-\infty,0]$
for $(\lambda,\mu)\in {\mathbb R}^2$.
\end{description}
\end{lemma}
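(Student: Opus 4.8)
The plan is to separate continuity on the open half-line $(-\infty,0)$ from left-continuity at the single endpoint $z=0$, and to treat $n\ge 3$ and $n=1,2$ differently only at that endpoint. On $(-\infty,0)$ the argument is routine: for each fixed $z<0$ one has $E(p)-z\ge -z>0$ uniformly in $p\in\mathbb{T}^n$, so the integrands $1/(E(p)-z)$, $\cos p_j/(E(p)-z)$, $\cos^2 p_j/(E(p)-z)$ and $\cos p_i\cos p_j/(E(p)-z)$ defining $a(z),b(z),c(z),d(z)$ are bounded, jointly continuous in $(p,z)$, and dominated on any compact subinterval of $(-\infty,0)$ by a fixed integrable function. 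Dominated convergence then makes $a,b,c,d$ continuous on $(-\infty,0)$, and since by Lemma \ref{FF} and \eqref{sora} $\delta_r(\lambda,\mu;z)$ is, for each fixed $(\lambda,\mu)$, a fixed polynomial in these four functions, $\bar\delta_r=\delta_r$ is continuous on $(-\infty,0)$ for every $(\lambda,\mu)\in\mathbb{R}^2$ and every $n\ge 1$.

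At the endpoint I would use that $\bar\delta_r(\lambda,\mu;0)$ is \emph{defined} as $\lim_{z\to 0-}\delta_r(\lambda,\mu;z)$, so left-continuity at $0$ amounts exactly to this left-limit existing finitely. For $n\ge 3$ this is immediate: Lemma \ref{Incl L2}(a) gives $1/E\in L^1(\mathbb{T}^n)$, so $a,b,c,d$ extend continuously to $z=0$ with finite values, whence $\delta_r(\lambda,\mu;z)\to b(0)\,\overline{\mathcal{H}}_0(\lambda,\mu)$ for every $(\lambda,\mu)$, exactly as recorded in Corollary \ref{delta1 con}. Combined with the first step this yields continuity on $(-\infty,0]$ for all $(\lambda,\mu)\in\mathbb{R}^2$.

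The delicate---and genuinely new---case is $n=1,2$ at $z=0$, which is where I expect the main obstacle to sit. By Lemma \ref{lak}, $a(z)$ and hence $\gamma(z)=b(z)$ diverge as $z\to 0-$, so $\delta_r(\lambda,\mu;z)$ blows up for generic $(\lambda,\mu)$. The resolution is the cancellation of these divergences carried out in Corollary \ref{delta1 con}: when $(\lambda,\mu)\in\overline{\mathfrak H}_0$ the leading singular terms combine so that $\lim_{z\to 0-}\delta_r(\lambda,\mu;z)$ equals the finite number $1-\mu$ (for $n=1$) or $1-\mu/2$ (for $n=2$), which is precisely the value assigned to $\bar\delta_r(\lambda,\mu;0)$ in \eqref{del}. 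Thus for $(\lambda,\mu)\in\overline{\mathfrak H}_0$ the left-limit at $0$ exists finitely and matches the endpoint value, giving left-continuity there; together with continuity on $(-\infty,0)$ this delivers continuity on $(-\infty,0]$. Off $\overline{\mathfrak H}_0$ the limit is $+\infty$, so no finite continuous extension exists, which explains why the statement is restricted to the hyperbola when $n=1,2$. In short, everything but the endpoint analysis for $n=1,2$ is the domination argument of the first paragraph, and the cancellation making that endpoint limit finite is exactly the content of Corollary \ref{delta1 con}.
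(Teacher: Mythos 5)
Your proposal is correct and follows essentially the same route the paper intends: continuity on $(-\infty,0)$ is the routine dominated-convergence statement for $a,b,c,d$, and continuity at the endpoint reduces, by the very definition of $\bar\delta_r(\lambda,\mu;0)$ as $\lim_{z\to 0-}\delta_r(\lambda,\mu;z)$, to the finiteness of that limit, which is exactly what Corollary \ref{delta1 con} supplies (for all $(\lambda,\mu)$ when $n\geq 3$, and on $\overline{\mathfrak H}_0$ when $n=1,2$). The paper in fact omits the proof entirely, so your write-up, including the observation that off $\overline{\mathfrak H}_0$ the limit is infinite and hence the restriction for $n=1,2$ is necessary, is a faithful and complete justification.
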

Let $z=0$. Then the asymptote of the hyperbola ${\mathfrak H}_{0}$
is given by
$$
(\lambda_{\infty}(0),\mu_{\infty}(0))=\left\{\begin{array}{ll}
(1,n)&n=1,2,\\
(\frac{a(0)}{b(0)},n)&n\geq3.
\end{array}
\right.$$
 The brunches  $ \Gamma_l(0)$ and $\Gamma_r(0)$
of the hyperbola $\overline{\mathfrak H}_{0}$ split $\mathbb{R}^2$
into three open sets
\begin{align*}
&G_{0}=\{(\lambda,\mu)\in \mathbb{R}^2;
{\overline{\mathcal{H}}}_{0}(\lambda,\mu)>0,
\lambda<\lambda_{\infty}(0)
 \},\quad
\\
&G_{1}=\{(\lambda,\mu)\in \mathbb{R}^2;
{\overline{\mathcal{H}}}_{0}(\lambda,\mu)<0
 \},\quad
\\
&G_{2}=\{(\lambda,\mu)\in \mathbb{R}^2;
{\overline{\mathcal{H}}}_{0}(\lambda,\mu)>0,
\lambda>\lambda_{\infty}(0)
 \}.
\end{align*}
We set
$$\Gamma_l=\Gamma_l(0),\quad \Gamma_r=\Gamma_r(0)$$
for notational simplicity. Hence $\partial G_0=\Gamma_l$ and
$\partial G_2=\Gamma_r$ follow from the definition of $G_0$ and
$G_2$. See Figure~\ref{f4}. We can check the value of $\bar
\delta_r(\lambda,\mu;z)$ for each $(\lambda,\mu)$ and $z\in
(-\infty,0]$ in the next lemma.
\begin{figure}[h]
\begin{center}
\begin{tikzpicture}[scale=1] %%%%%%%%%%%%%%%%%%\UTF{00E3}\UTF{0082}°\UTF{00E3}\UTF{0083}\UTF{00A9}\UTF{00E3}\UTF{0083}\UTF{0095}2
\useasboundingbox(-1,-2) rectangle(4,4);

%\UTF{00E5}\UTF{00BA}§\UTF{00E6}\UTF{0161}\UTF{0099}\UTF{00E8}\UTF{00BB}\UTF{017E}%
\draw[thick](-1,0)--(4,0)node[below]{$\lambda$};
\draw[thick](0,-1)--(0,4)node[left]{$\mu$};

%\UTF{00E8}£\UTF{009C}\UTF{00E5}\UTF{008A}\UTF{00A9}\UTF{00E7}\UTF{00B7}\UTF{009A}%
\draw[densely dashed](1,-1)node[below]{$\frac{a(0)}{b(0)}$}--(1,4);
\draw[densely dashed](-1,1)node[left]{$n$}--(4,1);

%\UTF{00E6}\UTF{009B}\UTF{00B2}\UTF{00E7}\UTF{00B7}\UTF{009A}%
\draw[very thick](-1,0.8)to[out=50,in=140,relative](0.8,-1);%\UTF{00E4}\UTF{017E}\UTF{008B}%
\draw[very thick](1.2,4)to[out=320,in=220,relative](4,1.2);%\UTF{00E4}\UTF{017E}\UTF{008A}%

\node[scale=1] at(-0.5,-0.5){$G_0$}; \node[scale=1]
at(1,1.5){$G_1$}; \node[scale=1] at(3,3){$G_2$}; \node
at(-1,0.5){$\Gamma_l$}; \node at(1.7,3.8){$\Gamma_r$};

\end{tikzpicture}
\end{center}
\vspace{-0.5cm}\caption{Region of $G_j$ for $n\geq 3$} \label{f4}
\end{figure}
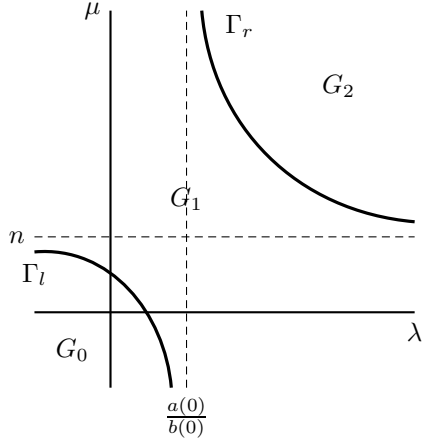

%\begin{figure}[h] \begin{center}\includegraphics[width=10cm]{Delta.eps} \caption{Illustration of $G_{0}$, $G_{1}$ and $G_{2}$ } \label{11} \end{center} \end{figure}
{
\begin{lemma}\label{F1}
We have the following facts:
\begin{itemize}
\item[(a)]
\begin{enumerate}
\item
Let $(\lambda,\mu)\in G_{0}\cup \Gamma_l$. Then $\bar
\delta_r(\lambda,\mu;z)\not=0$ for  $z\in (-\infty,0)$.
\item
Let $(\lambda,\mu)\in \Gamma_l$. Then
    $\bar \delta_r(\lambda,\mu;0)\neq 0$  for $n=1,2$.
\item
Let $(\lambda,\mu)\in \Gamma_l$. Then
  $\bar \delta_r(\lambda,\mu;0)=0$ for $n\geq 3$.
\item
Let $(\lambda,\mu)\in G_{0}$. Then $\bar
\delta_r(\lambda,\mu;0)\not=0$ for $n\geq1$.
\end{enumerate}
\item[(b)]
\begin{enumerate}
\item
  Let $(\lambda,\mu)\in G_{1}\cup \Gamma_r$. Then
there exists unique point $z\in (-\infty,0)$ such that $\bar
\delta_r(\lambda,\mu;z)=0$.
\item
Let $(\lambda,\mu)\in \Gamma_r$. Then
    $\bar \delta_r(\lambda,\mu;0)\neq 0$  for $n=1,2$.
\item
Let $(\lambda,\mu)\in \Gamma_r$. Then
 $\bar \delta_r(\lambda,\mu;0)=0$ for $n\geq 3$.
\end{enumerate}
\item[(c)]  Let $(\lambda,\mu)\in G_{2}$. Then
there exist two zeros $z_1, z_2\in (-\infty,0)$ such that $\bar
\delta_r(\lambda,\mu;z_1)=\bar \delta_r(\lambda,\mu;z_2)=0$.
\end{itemize}
\end{lemma}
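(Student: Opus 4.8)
The plan is to reduce the whole statement to a geometric sweeping argument: as $z$ decreases from $0$ to $-\infty$, the point $(\lambda,\mu)$ is crossed by the branches $\Gamma_l(z),\Gamma_r(z)$ of the moving hyperbola $\overline{\mathfrak H}_z$, and I count these crossings. First I would record the reduction. For $z\in(-\infty,0)$ we have by \eqref{del Hyper} and Lemma \ref{alpha - b} that $\bar\delta_r(\lambda,\mu;z)=\gamma(z)\overline{\mathcal H}_z(\lambda,\mu)=b(z)\,\overline{\mathcal H}_z(\lambda,\mu)$, and $b(z)>0$ by Lemma \ref{LemMon}. Hence for such $z$,
\[
\bar\delta_r(\lambda,\mu;z)=0\iff(\lambda,\mu)\in\overline{\mathfrak H}_z=\Gamma_l(z)\cup\Gamma_r(z).
\]
By the disjointness relations \eqref{sch}, $(\lambda,\mu)$ lies on at most one left branch and at most one right branch as $z$ runs over $(-\infty,0)$, so the number of interior zeros of $\bar\delta_r(\lambda,\mu;\cdot)$ equals the number of left branches through $(\lambda,\mu)$ plus the number of right branches through it, each being $0$ or $1$. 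Writing $\mathcal R_l=\bigcup_{z<0}\Gamma_l(z)$ and $\mathcal R_r=\bigcup_{z<0}\Gamma_r(z)$, it then remains to identify these two sweep sets with $G_0,G_1,G_2$.

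Second I would carry out the sweep by fixing the abscissa $\lambda$ and following the height of each branch. With $\lambda_\infty(z)=a(z)/b(z)$ and $\mu_\infty(z)=n-z$, the right branch meets the vertical line through $\lambda$ (when $\lambda>\lambda_\infty(z)$) at height $\mu_r(z)=(n-z)+\frac{n}{\lambda-\lambda_\infty(z)}$, and the left branch (when $\lambda<\lambda_\infty(z)$) at $\mu_l(z)=(n-z)-\frac{n}{\lambda_\infty(z)-\lambda}$. By Lemma \ref{lem asymp}, $\lambda_\infty(z)$ is continuous and decreases from $+\infty$ (as $z\to-\infty$) to $X=\lambda_\infty(0)$ (as $z\to0-$); since the term $(n-z)$ strictly increases as $z$ decreases, both $\mu_r$ and $\mu_l$ are strictly increasing as $z$ decreases on their intervals of definition. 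Computing the two boundary limits then yields the ranges: for $\lambda>X$ the left branch reaches $\lambda$ only for $z<z_1$ with $\lambda_\infty(z_1)=\lambda$, and there $\mu_l$ runs over all of $\mathbb R$, so every point with $\lambda>X$ lies on exactly one left branch, i.e. $\{\lambda>X\}\subset\mathcal R_l$; meanwhile $\mu_r$ runs over $\big(n+\frac{n}{\lambda-X},+\infty\big)$, so $\mathcal R_r$ is exactly the part of $\{\lambda>X\}$ strictly above $\Gamma_r(0)$, which is $G_2$. For $\lambda<X$ the right branch never reaches $\lambda$ and $\mu_l$ runs over $\big(n-\frac{n}{X-\lambda},+\infty\big)$, so the point lies on a left branch iff $\overline{\mathcal H}_0(\lambda,\mu)<0$, i.e. iff it is in $G_1$. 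Collecting these (and the borderline $\lambda=X$, where $\mu_l\to-\infty$ as $z\to0-$, so the whole line lies in $\mathcal R_l\cap G_1$) gives $\mathcal R_r=G_2$, $G_1\cup\Gamma_r\subset\mathcal R_l$, and $(G_0\cup\Gamma_l)\cap(\mathcal R_l\cup\mathcal R_r)=\emptyset$. This is precisely (a)(1), the uniqueness of the zero in (b)(1), and the two distinct zeros $z_1\neq z_2$ in (c).

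Finally the value at $z=0$ is read off from \eqref{del} and the Remark following it. On $\Gamma_l\cup\Gamma_r\subset\overline{\mathfrak H}_0$ one has $\overline{\mathcal H}_0(\lambda,\mu)=0$, so for $n\geq3$, $\bar\delta_r(\lambda,\mu;0)=b(0)\overline{\mathcal H}_0(\lambda,\mu)=0$, giving (a)(3) and (b)(3); for $n=1,2$, $\bar\delta_r(\lambda,\mu;0)=1-\mu/n$, which is nonzero on $\overline{\mathfrak H}_0$ by the Remark, giving (a)(2) and (b)(2). For $(\lambda,\mu)\in G_0$ one has $\overline{\mathcal H}_0(\lambda,\mu)>0$, whence $\bar\delta_r(\lambda,\mu;0)=+\infty$ for $n=1,2$ by Corollary \ref{delta1 con} and $=b(0)\overline{\mathcal H}_0(\lambda,\mu)>0$ for $n\geq3$; in either case it is nonzero, which is (a)(4).

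I expect the main obstacle to be the existence (foliation) half of the sweep. Disjointness \eqref{sch} supplies the ``at most once'' bound for free, but showing that each branch family covers \emph{exactly} its claimed region---and, on each side of the vertical asymptote $\lambda=X$, distinguishing which of $G_0,G_1$ the left sweep reaches---requires the strict monotonicity and the two boundary limits of $\mu_l,\mu_r$, for which Lemma \ref{lem asymp} is indispensable. The careful matching of the algebraically defined $G_0,G_1,G_2$ with the geometric sweep sets $\mathcal R_l,\mathcal R_r$, together with the bookkeeping at $\lambda=X$ and the placement of the $z=0$ limit outside the open interval $(-\infty,0)$, is the only genuine subtlety.
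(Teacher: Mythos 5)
Your proof is correct and follows essentially the same route as the paper: reduce $\bar\delta_r(\lambda,\mu;z)=0$ to membership in the moving hyperbola $\overline{\mathfrak H}_z$, use the disjointness \eqref{sch} for uniqueness, a sweeping/covering argument for existence, and read off the $z=0$ values from \eqref{del}, Corollary \ref{delta1 con} and the Remark. The only difference is that you substantiate the covering claims ($\bigcup_{z<0}\Gamma_l(z)\supset G_1\cup\Gamma_r\cup G_2$, $\bigcup_{z<0}\Gamma_r(z)=G_2$) by explicitly tracking the branch heights $\mu_l(z),\mu_r(z)$ via Lemma \ref{lem asymp}, whereas the paper simply asserts them from the geometric picture.
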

}\begin{proof} Let $(\lambda,\mu)\in G_{0}\cup \Gamma_l$. Then we
can see that $(\lambda,\mu)\not\in \overline{\mathfrak{H}}_z$ for
any $z\in(-\infty,0)$. Thus (a)(1) follows.

Also by  \eqref{sch}, it can be seen that $\cup_{z\in(-\infty,
0)}\Gamma_l(z)\supset G_1$, $\Gamma_l(z)\cap \Gamma_l(w)=\emptyset$
if $z\not=w$, and $\Gamma_r(z) \cap G_1=\emptyset $.
% By reason of  $\Gamma_l(z)$, i.e.
Hence
%Since the positive vector function \eqref{pos vec}  are continuously moved by$z$,
there exists a unique
 $z\in (-\infty,0)$  such that
$ (\lambda,\mu)\in \Gamma_l(z) $, which proves (b)(1).
%By  Lemma \ref{main1} once again, %in that case,
We can also see that
 $\cup_{z\in(-\infty, 0)}\Gamma_l(z)\supset G_2$,
 $\cup_{z\in(-\infty, 0)}\Gamma_r(z)\supset G_2$,
$\Gamma_l(z)\cap \Gamma_l(w)=\emptyset$ if $z\not=w$,
$\Gamma_r(z)\cap \Gamma_r(w)=\emptyset$ if $z\not=w$, and
$\Gamma_l(z)\cap \Gamma_r(z)=\emptyset$.
% if $z\not=w$, $\Gamma_r(z) \cap G_1=\emptyset $. for any $z$ in$(-\infty,0)$  we have
%$$\Gamma_l(z) \cap G_2\neq \emptyset  \quad \mbox{and}\quad\Gamma_r(z) \cap G_2\neq \emptyset .$$Since
% Considering $\Gamma_l(z)$ and $\Gamma_r(z)$ are continuous in $z$,
%continuously dependent   on $z$,
Hence there exist $z_1,z_2\in (-\infty,0)$  such
 that
 $
(\lambda,\mu)\in \Gamma_l(z_1)$ and $ (\lambda,\mu)\in \Gamma_r(z_2)
 $,
which proves (c). % and $z_1<z_2$.
We note that since $(\lambda,\mu)\in \Gamma_l$ implies that
$1\not=\mu$ for $n=1$, and $2\not=\mu$ for $n=2$, $\bar
\delta_r(\lambda,\mu;0)\neq 0$  for $n=1,2$ follows. Hence (a)(2)
and (a)(3) follow from \eqref{del}, and (b)(2) and (b)(3) are
similarly proven. Finally for $(\lambda,\mu)\in G_0$ we have
$$\bar\delta_r(\lambda,\mu;0)=\left\{\begin{array}{ll}\infty,&n=1,2,\\
b(0)\bar{\mathcal{H}}_0(\lambda,\mu)\not=0, &n\geq 3.
\end{array}\right.$$
Then (a)(4) follows.
%(\ref{del}).  %Lemma \ref{lem asympto2} and its corollary.
\end{proof}
%\begin{remark}
\subsubsection{Zeros of $\delta_c(\lambda;z)$}
We study zeros of $\delta_c(\lambda;z)$.
 In a similar manner we extend $\delta_c(\lambda,z)$ for $z\in (-\infty,0]$.
When $n\geq2$,   the  function $c(z)-d(z)$ exists, and due to its
  monotone property (See Lemma \ref{LemMon}) we can define
%  awards it's continuation at $z=0$
$
 \alpha = %c(0)-d(0)=
 \lim_{z\to 0-}c(z)-d(z).
%\quad \mbox{and}\quad c(0)-d(0)>0
$ Note that $\alpha>0$ and we set
$$
\lambda_c=\frac{1}{\alpha}.%(c(0)-d(0))^{-1},
$$
 Let us write $
\delta_c(\lambda;z)=\varrho(\lambda;z)^{n-1} $, where
$\varrho(\lambda;z)=\lambda (c(z)-d(z))-1$. We define $\bar
\delta_c(\lambda;z)$ by
\begin{align}\label{K}
\bar \delta_c(\lambda;z)=\left\{
\begin{array}{lll}
\delta_c(\lambda;z),&z\in (-\infty, 0),&n\geq 1,\\
(\lambda\alpha-1)^{n-1},&z=0, &n\geq2,\\
1,&z=0, &n=1.
\end{array}\right.
\end{align}

\begin{lemma}\label{delta cos}
Let $n\geq 2$.  Then (a)--(c) follow.
\begin{itemize}
\item[(a)] Let $\lambda\leq \lambda_c$. Then
$\varrho(\lambda;z)\not=0$ for any $z\in (-\infty,0)$.
\item[(b)] Let $\lambda= \lambda_c$. Then
$\varrho(\lambda;0)=0$.
\item[(c)] Let $\lambda> \lambda_c$.
Then there exists unique $z\in (-\infty,0)$ such that
$\varrho(\lambda;z)=0$ with multiplicity one.
\end{itemize}
\end{lemma}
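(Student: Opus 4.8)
The plan is to reduce all three parts to the behaviour of the single scalar function $g(z) := c(z) - d(z)$, since $\varrho(\lambda;z) = \lambda g(z) - 1$ and the exponent $n-1$ in $\delta_c(\lambda;z) = \varrho(\lambda;z)^{n-1}$ only rescales multiplicities without moving zeros. By Lemma~\ref{LemMon}, for $n \geq 2$ the function $g$ is positive and strictly increasing on $(-\infty, 0]$ with $\lim_{z \to -\infty} g(z) = 0$, while by the definition of $\alpha$ we have $\lim_{z \to 0-} g(z) = \alpha > 0$. Hence $g$ is a continuous, strictly increasing bijection of $(-\infty, 0)$ onto the open interval $(0, \alpha)$; in particular $0 < g(z) < \alpha$ for every $z \in (-\infty, 0)$. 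This one observation drives everything.

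For part (a), I would fix $\lambda \leq \lambda_c = 1/\alpha$ and $z \in (-\infty, 0)$, and split on the sign of $\lambda$. If $\lambda \leq 0$, then $\lambda g(z) \leq 0 < 1$ because $g(z) > 0$; if $0 < \lambda \leq \lambda_c$, then $\lambda g(z) < \lambda \alpha \leq \lambda_c \alpha = 1$, using the \emph{strict} inequality $g(z) < \alpha$ together with $\lambda > 0$. In either case $\varrho(\lambda;z) = \lambda g(z) - 1 < 0$, so $\varrho(\lambda;z) \neq 0$. Part (b) is then an immediate computation: at $z = 0$ the extended value is $g(0) = \alpha$ (equivalently the limit $\lim_{z \to 0-} g(z) = \alpha$), so $\varrho(\lambda_c; 0) = \lambda_c \alpha - 1 = (1/\alpha)\alpha - 1 = 0$.

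For part (c), assume $\lambda > \lambda_c$, so that $0 < 1/\lambda < \alpha$. Since $g$ maps $(-\infty, 0)$ bijectively onto $(0, \alpha)$, the value $1/\lambda$ lies in its range, so by the intermediate value theorem there is $z_0 \in (-\infty, 0)$ with $g(z_0) = 1/\lambda$, i.e. $\varrho(\lambda;z_0) = 0$; strict monotonicity of $g$ forces this $z_0$ to be unique. To see that the zero is simple I would differentiate the integral representation of $g$ recorded in the proof of Lemma~\ref{LemMon}, obtaining $\partial_z \varrho(\lambda;z) = \lambda\, g'(z)$ with
$$g'(z) = \frac{1}{2(2\pi)^n}\int_{\mathbb{T}^n} \frac{(\cos p_1 - \cos p_2)^2}{(E(p)-z)^2}\,dp > 0,$$
since the integrand is nonnegative and strictly positive on a set of positive measure. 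Hence $\partial_z \varrho(\lambda;z_0) = \lambda\, g'(z_0) > 0 \neq 0$, so $z_0$ is a zero of multiplicity one.

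The argument is essentially routine once Lemma~\ref{LemMon} is available; the only points that need genuine care are the strictness in part (a), which relies on $g(z) < \alpha$ for $z < 0$ coming from strict monotonicity, and the verification that $g'(z) > 0$ for the simplicity claim in (c). The latter is the one place where I would go back to the explicit integral representation rather than cite monotonicity abstractly, and I regard it as the only mild obstacle in the proof.
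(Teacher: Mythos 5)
Your proof is correct and follows essentially the same route as the paper: both arguments rest on the positivity and strict monotonicity of $c(z)-d(z)$ from Lemma \ref{LemMon}, the limits $c(z)-d(z)\to 0$ as $z\to-\infty$ and $c(z)-d(z)\to\alpha$ as $z\to 0-$, and the intermediate value theorem plus monotonicity for existence and uniqueness in (c). Your explicit treatment of $\lambda\le 0$ and the derivative computation $\partial_z\varrho(\lambda;z)=\lambda\,g'(z)>0$ for the multiplicity-one claim are slightly more detailed than the paper's (which simply invokes monotonicity), but the substance is the same.
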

\begin{proof}
Since  $c(z)-d(z)>0$ is strictly monotonously increasing in
$(-\infty,0)$, we get
\begin{align*}
& \varrho(\lambda;z)\leq
\varrho(\lambda_c;z)<\varrho(\lambda_c;0)=0,\quad
\mbox{if}\quad 0<\lambda\leq \lambda_c,\\
& \varrho(\lambda;z)=-1,\quad \mbox{if}\quad \lambda=0,
\end{align*}
which prove (a) and (b). Since  $\varrho(\lambda;0)>
\varrho(\lambda_c;0)=0$ and $\lim_{z\to -\infty}
\varrho(\lambda;z)=-1$ there exists $ z\in (-\infty,0)$ such that
$\varrho(\lambda;z)=0$. By the monotonicity of
$\varrho(\lambda;\cdot)$ this zero is a unique and has multiplicity
one. Hence (c) is proven.
\end{proof}

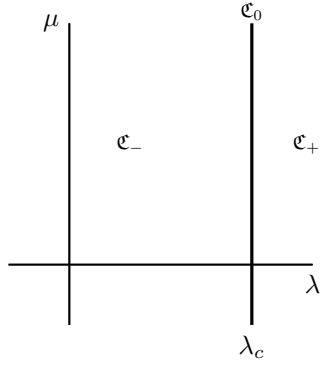
\begin{figure}[h]
\begin{center}
\begin{tikzpicture}[scale=0.8]%%%%%%%%%%%%%%\UTF{2460}""%
\useasboundingbox(-1,-2) rectangle(4,4);
\draw[thick](-1,0)--(4,0)node[below]{$\lambda$};
\draw[thick](0,-1)--(0,4)node[left]{$\mu$};
\node[below]at(1.5,-1.5){$ $}; \draw[thin](-1,0)--(4,0);
\draw[thin](0,-1)--(0,4); \draw[very
thick](3,-1)node[below]{$\lambda_c$}--(3,4);
\node[scale=0.8]at(1,2){${{\mathfrak C}_-}$};
\node[scale=0.8]at(3.9,2){${{\mathfrak C}_+}$};
\node[scale=0.8]at(3,4.2){${{\mathfrak C}_0 }$};
\end{tikzpicture}
\end{center}
\vspace{-0.5cm} \caption{Regions of $C_\pm$  for $n\geq 2$}
\label{f81}
\end{figure}
We divide $(\lambda,\mu)$-plane into two half planes $C_\pm$ and the
boundary ${\mathfrak C}_0 $. Set
$${\mathfrak C}_-=\{(\lambda,\mu)\in{\mathbb R}^2; \lambda<\lambda_c\},
{\mathfrak C}_0 =\{(\lambda,\mu)\in {\mathbb R}^2;
\lambda=\lambda_c\}, {\mathfrak C}_+=\{(\lambda,\mu)\in{\mathbb
R}^2; \lambda>\lambda_c\}.$$ See Figure \ref{f81}. We immediately
have a lemma.
\begin{lemma}\label{FFF}
Let $n\geq2$. Then (a)--(c) follow.
\begin{itemize}
\item[(a)] For any $(\lambda,\mu)\in {\mathfrak C}_-\cup {\mathfrak C}_0 $,  %the function
$\bar \delta_c(\lambda;\cdot)$ has no zero %root
in $(-\infty,0)$.
\item[(b)] Let  $(\lambda,\mu)\in {\mathfrak C}_0 $. Then
 $\bar \delta_c(\lambda;0)=0$, and
$z=0$ has multiplicity $n-1$.
\item[(c)] For any $(\lambda,\mu)\in {\mathfrak C}_+$, % the function
$\bar \delta_c(\lambda;\cdot)$ has a unique zero in $(-\infty,0)$
with multiplicity $n-1$.
\end{itemize}
\end{lemma}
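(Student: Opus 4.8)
The plan is to deduce all three parts directly from Lemma~\ref{delta cos} together with the definition \eqref{K} of $\bar\delta_c$, since $\delta_c$ and hence $\bar\delta_c$ depend only on $\lambda$ (the coordinate $\mu$ is free, and the three regions $\mathfrak{C}_-,\mathfrak{C}_0,\mathfrak{C}_+$ are distinguished solely by whether $\lambda<\lambda_c$, $\lambda=\lambda_c$, or $\lambda>\lambda_c$). The one substantive point is to track how the order of a zero of $\varrho(\lambda;\cdot)$ is affected by passing to $\bar\delta_c=\varrho^{\,n-1}$; everything else is a direct transcription.

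For part (a), I would note that on $\mathfrak{C}_-\cup\mathfrak{C}_0=\{\lambda\le\lambda_c\}$ Lemma~\ref{delta cos}(a) gives $\varrho(\lambda;z)\ne0$ for every $z\in(-\infty,0)$. Since $\bar\delta_c(\lambda;z)=\varrho(\lambda;z)^{\,n-1}$ there and an $(n-1)$-th power of a nonzero number is nonzero, $\bar\delta_c(\lambda;\cdot)$ has no zero in $(-\infty,0)$, which is (a).

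For part (b), fix $(\lambda,\mu)\in\mathfrak{C}_0$, so $\lambda=\lambda_c=1/\alpha$. The $z=0$ line of \eqref{K} gives $\bar\delta_c(\lambda_c;0)=(\lambda_c\alpha-1)^{\,n-1}=0$, which is the first assertion. To read off the multiplicity I would observe that $\varrho(\lambda_c;z)=\lambda_c(c(z)-d(z))-1$ has a simple zero at $z=0$: by Lemma~\ref{LemMon} the function $c(z)-d(z)$ is strictly increasing, so its one-sided derivative at $0$ is positive and $\varrho(\lambda_c;\cdot)$ vanishes to exactly first order. Raising to the power $n-1$ then makes $z=0$ a zero of $\bar\delta_c(\lambda_c;\cdot)$ of order $n-1$.

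For part (c), let $(\lambda,\mu)\in\mathfrak{C}_+$, i.e. $\lambda>\lambda_c$. Lemma~\ref{delta cos}(c) supplies a unique $z\in(-\infty,0)$ with $\varrho(\lambda;z)=0$, the zero being simple. Hence $\bar\delta_c(\lambda;\cdot)=\varrho(\lambda;\cdot)^{\,n-1}$ has its unique zero in $(-\infty,0)$ at that same point, now of multiplicity $n-1$, giving (c). As anticipated, the sole place needing care is the multiplicity bookkeeping through the exponent $n-1$, which is exactly why the simplicity of the zeros of $\varrho$ furnished by Lemma~\ref{delta cos} is the ingredient one must invoke.
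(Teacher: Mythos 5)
Your proposal is correct and follows the same route as the paper, whose entire proof of this lemma is the single line ``This follows from Lemma \ref{delta cos}''; you have simply made explicit the translation from $\varrho$ to $\bar\delta_c=\varrho^{\,n-1}$ and the resulting multiplicity count $n-1$, which is exactly the bookkeeping the paper leaves implicit. No gaps.
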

\begin{proof}
This follows from Lemma \ref{delta cos}.
  \end{proof}

\subsection{Eigenvalues of $H_{\lambda\mu}^{\rm e}$}
In the previous sections we consider zeros of $\bar
\delta_r(\lambda,\mu;z)$ and $\bar \delta_c(\lambda;z)$ for
$z\in(-\infty,0)$. Let $(\lambda,\mu)$ and $z\in (-\infty,0)$ be
solution of $\bar \delta_r(\lambda,\mu;z)=0$ or $\bar
\delta_c(\lambda;z)=0$. Then $z\in \sigma_p(H_{\lambda\mu}^{\rm
e})$. In this section we summarise spectral properties of
$H_{\lambda\mu}^{\rm e}$ derived from zeros of $\bar
\delta_r(\lambda,\mu;z)\bar \delta_c(\lambda;z)$.
\begin{definition}[Threshold eigenvalue  and threshold resonance]
\label{reson def even} Let $f$ be a solution of
$H_{\lambda\mu}^{\mathrm{e}} f=0$ (resp. $H_{\lambda}^{\mathrm{o}}
f=0$).
\begin{itemize}
\item[(1)] If $f\in L^2_{\rm e}(\mathbb{T} ^n)$ (resp. $f\in L^2_{\rm o}(\mathbb{T} ^n)$), we say that $0$ is
a lower threshold eigenvalue of $H_{\lambda\mu}^{\mathrm{e}}$ (resp.
$H_{\lambda}^{\mathrm{o}}$).
\item[(2)]
If $f\in L^1_{\rm e}(\mathbb{T} ^n)\setminus L^2_{\rm e}(\mathbb{T}
^n)$ (resp. $f\in L^1_{\rm o}(\mathbb{T} ^n)\setminus L^2_{\rm
o}(\mathbb{T} ^n)$), we say that $0$ is a lower  threshold resonance
of $H_{\lambda\mu}^{\mathrm{e}}$ (resp. $H_{\lambda}^{\mathrm{o}}$).
\item[(3)]
If $f\in L^\epsilon_{\rm e}(\mathbb{T} ^n)\setminus L^1_{\rm
e}(\mathbb{T} ^n)$ (resp. $f\in L^\epsilon_{\rm o}(\mathbb{T}
^n)\setminus L^1_{\rm o}(\mathbb{T} ^n)$) for any $0<\epsilon<1$, we
say that $0$ is a lower  super-threshold resonance of
$H_{\lambda\mu}^{\mathrm{e}}$ (resp. $H_{\lambda}^{\mathrm{o}}$).
\end{itemize}
\end{definition}
In what follows we may use "threshold eigenvalue" (resp. threshold
resonance) instead of "lower threshold eigenvalue" (resp. lower
threshold resonance)
 for simplicity.
Then $(\lambda,\mu)$-plane is divided into 11 regions. $G_0$,
$\Gamma_l$, $G_1\cap {\mathfrak C}_-$, $G_1\cap {\mathfrak C}_0 $,
$G_1\cap {\mathfrak C}_+$, $\Gamma_r\cap {\mathfrak C}_-$,
$\Gamma_r\cap {\mathfrak C}_0 $, $\Gamma_r\cap {\mathfrak C}_+$,
$G_2\cap {\mathfrak C}_-$, $G_2\cap {\mathfrak C}_0 $ and $G_2\cap
{\mathfrak C}_+$.
\begin{lemma}[Eigenvalues of $H_{\lambda\mu}^{\rm e}$  for $n\geq 2$]
\label{lemma Delta 0} Let $n\geq2$. Then $H_{\lambda\mu}^{\rm e}$
has the following facts:
\begin{itemize}
\item[(1)] $(\lambda,\mu)\in G_0$.
There is no eigenvalue in $(-\infty,0)$, and there is neither
threshold eigenvalue  nor threshold resonance.
\item[(2)] $(\lambda,\mu)\in \Gamma_l$.
\begin{itemize}
\item[$n=2$]
There is no eigenvalue in $(-\infty,0)$ and there is neither
threshold eigenvalue  nor threshold resonance.
\item[$n\geq 3$]
There is no eigenvalue in $(-\infty,0)$ but there is a simple
threshold eigenvalue  or threshold resonance.
\end{itemize}
\item[(3)] $(\lambda,\mu)\in G_1\cap {\mathfrak C}_-$.
There is a simple eigenvalue in $(-\infty,0)$ but there is neither
threshold eigenvalue  nor threshold resonance.
\item[(4)] $(\lambda,\mu)\in G_1\cap {\mathfrak C}_0 $.
There is a simple eigenvalue in $(-\infty,0)$ and there is an
$(n-1)$-fold threshold eigenvalue  or threshold resonance.
\item[(5)] $(\lambda,\mu)\in G_1\cap {\mathfrak C}_+$.
There are a simple eigenvalue and an $(n-1)$-fold eigenvalue in
$(-\infty,0)$, but there is neither threshold eigenvalue  nor
threshold resonance.
\item[(6)] $(\lambda,\mu)\in \Gamma_r\cap {\mathfrak C}_-$.
\begin{itemize}
\item[$n=2$]
There is a simple eigenvalue in $(-\infty,0)$ but there is neither
threshold eigenvalue  nor threshold resonance.
\item[$n\geq3$]
There is a simple eigenvalue in $(-\infty,0)$ and there is a simple
threshold eigenvalue  or threshold resonance.
\end{itemize}
\item[(7)] $(\lambda,\mu)\in \Gamma_r\cap {\mathfrak C}_0 $.
\begin{itemize}
\item[$n=2$]
There is a simple eigenvalue in $(-\infty,0)$ and  there is an
$(n-1)$-fold threshold eigenvalue  or  threshold resonance.
\item[$n\geq3$]
There is a simple eigenvalue in $(-\infty,0)$, and there are an
$(n-1)$-fold threshold eigenvalue  or  threshold resonance, and a
simple threshold eigenvalue  or  threshold resonance.
\end{itemize}
\item[(8)] $(\lambda,\mu)\in \Gamma_r\cap {\mathfrak C}_+$.
\begin{itemize}
\item[$n=2$]
There is a simple eigenvalue and an $(n-1)$-fold eigenvalue in
$(-\infty,0)$, but there is neither threshold eigenvalue   nor
threshold resonance.
\item[$n\geq3$]
There are a simple eigenvalue and an $(n-1)$-fold eigenvalue in
$(-\infty,0)$. There is a  simple threshold eigenvalue  or
threshold resonance.
\end{itemize}
\item[(9)] $(\lambda,\mu)\in G_2\cap {\mathfrak C}_-$.
There are two eigenvalues in $(-\infty,0)$ but there is neither
threshold eigenvalue  nor threshold resonance.
\item[(10)] $(\lambda,\mu)\in G_2\cap {\mathfrak C}_0 $.
There are two eigenvalues in $(-\infty,0)$ and there is an
$(n-1)$-fold threshold eigenvalue  or  threshold resonance.
\item[(11)]
$(\lambda,\mu)\in G_2\cap {\mathfrak C}_+$. There are three
eigenvalues in $(-\infty,0)$ and one of them is $(n-1)$-fold, but
there is neither threshold eigenvalue  nor threshold resonance.
\end{itemize}
\end{lemma}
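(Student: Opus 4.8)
The plan is to turn the statement into a counting problem via the two Birman--Schwinger reductions together with the factorization \eqref{factorization}, and then to read the answer off Lemma \ref{F1} and Lemma \ref{FFF} region by region. By Lemma \ref{lem 1e} (for $z<0$) and Lemma \ref{lem 1e res} (for $z=0$), a number $z\le0$ is an eigenvalue, resp. gives an $L^1$ solution of $H^{\mathrm e}_{\lambda\mu}f=0$, exactly when $1\in\sigma(G_{\rm e}(z))$, i.e. when $\bar\delta_r(\lambda,\mu;z)\,\bar\delta_c(\lambda;z)=0$, the multiplicity being $\dim\ker(G_{\rm e}(z)-{\rm I})$. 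First I would fix the multiplicity dictionary by splitting $\mathbb C^{n+1}$ into the symmetric subspace spanned by $(1,0,\dots,0)$ and $(0,1,\dots,1)$ and the traceless subspace $\{w_0=0,\ \sum_{j\ge1}w_j=0\}$. Reading off \eqref{G=AB} shows both are $G_{\rm e}(z)$-invariant, that $G_{\rm e}(z)$ acts on the traceless part as the scalar $\lambda(c(z)-d(z))$, and that the $2\times2$ symmetric block has off-diagonal entries proportional to $b(z)>0$ (Lemma \ref{LemMon}), so that block is never the identity and its $1$-eigenspace is at most one-dimensional. Hence a zero of $\varrho(\lambda;\cdot)=\lambda(c-d)-1$ gives an eigenvalue of multiplicity exactly $n-1$, a zero of $\delta_r$ a \emph{simple} one, and at a shared zero the multiplicities add; the associated (threshold) eigenfunctions are \eqref{eigen fe res} with $Z$ in the relevant subspace.

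Next I would prove the inclusion $G_0\cup\Gamma_l\subset\mathfrak C_-$, which is precisely why cases (1)--(2) need no $\mathfrak C$-refinement. This reduces to $\lambda_\infty(0)=a(0)/b(0)\le\lambda_c=1/\alpha$, i.e. $a(0)\bigl(c(0)-d(0)\bigr)\le b(0)$, and the latter is immediate from Lemma \ref{as<b}, since $a(0)(c(0)-d(0))<a(0)s(0)\le b(0)$. As the vertical asymptote of $\overline{\mathfrak H}_0$ is $\lambda=\lambda_\infty(0)$ and both $G_0$ and the left branch $\Gamma_l$ lie in $\{\lambda<\lambda_\infty(0)\}$, they sit inside $\mathfrak C_-$, whence $\bar\delta_c$ contributes no zero there by Lemma \ref{FFF}(a).

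With the dictionary in hand the eleven cases are bookkeeping. For each region I count the zeros of $\bar\delta_r$ in $(-\infty,0)$ and at $z=0$ from Lemma \ref{F1} --- these are the \emph{simple} eigenvalues/thresholds --- and the zeros of $\bar\delta_c$ from Lemma \ref{FFF} --- the $(n-1)$-fold ones --- and add them. For instance $G_1\cap\mathfrak C_+$ gives one interior $\delta_r$-zero (Lemma \ref{F1}(b)(1)) and one interior $(n-1)$-fold $\delta_c$-zero (Lemma \ref{FFF}(c)) with nothing at $z=0$, which is case (5); on $\Gamma_r\cap\mathfrak C_0$ one finds the interior simple eigenvalue (Lemma \ref{F1}(b)(1)), the $(n-1)$-fold threshold at $z=0$ (Lemma \ref{FFF}(b)), and, only for $n\ge3$, the simple threshold at $z=0$ (Lemma \ref{F1}(b)(3)), reproducing the $n=2$ versus $n\ge3$ split of case (7). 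The $n=2$/$n\ge3$ dichotomy on $\Gamma_l,\Gamma_r$ throughout is exactly the content of Lemma \ref{F1}(a)(2)--(3),(b)(2)--(3), and the remaining regions go the same way.

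The step I expect to require genuine care --- the main obstacle --- is the existence of the $z=0$ threshold solution when $n=2$, in cases (7) and (10), since the $z=0$ Birman--Schwinger principle of Lemma \ref{lem 1e res} is stated only for $n\ge3$ (indeed $a(0),b(0),c(0),d(0)$ each diverge at $n=2$). Here I would argue directly with the traceless $Z$: the solution \eqref{eigen fe res} is $f(p)=\varphi(p)/E(p)$ with $\varphi(p)\propto\sum_j w_j\cos p_j$ and $\sum_j w_j=0$, so $\varphi(0)=0$ and $\varphi(p)=O(|p|^2)$; Lemma \ref{Incl L2}(c) (with $\alpha_n=2>\tfrac{4-n}{2}$) then gives $f\in L^2(\mathbb T^2)\subset L^1(\mathbb T^2)$, while the same cancellation $\sum_j w_j=0$ makes $\int\cos p_i\,f\,dp=\tfrac{\lambda}{\sqrt2}(2\pi)^{-n/2}w_i\bigl(c(0)-d(0)\bigr)$ finite despite the divergence of $c(0),d(0)$, so $f$ really does solve $H^{\mathrm e}_{\lambda\mu}f=0$. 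A final small point is reading the counts as \emph{distinct} energies (the three eigenvalues of case (11), say): the symmetric and traceless eigenfunctions are orthogonal, so a coincidence of a $\delta_r$-zero with the $\varrho$-zero would only merge their multiplicities, and strict monotonicity from Lemma \ref{LemMon} separates the relevant energies in the configuration recorded by the statement.
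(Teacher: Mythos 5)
Your proposal is correct and follows the same core route as the paper: reduce everything to counting zeros of $\bar\delta_r(\lambda,\mu;\cdot)\,\bar\delta_c(\lambda;\cdot)$ via the Birman--Schwinger principle and read the counts off Lemmas \ref{F1} and \ref{FFF} (the paper's proof of this lemma is literally that one sentence). Where you differ is in how the supporting facts are organized, and your choices are arguably cleaner. First, you obtain the multiplicities from the $G_{\rm e}(z)$-invariant splitting of $\mathbb C^{n+1}$ into the two-dimensional symmetric block (with $\det$ of block minus ${\rm I}$ equal to $\delta_r$, never the identity since $b(z)>0$) and the $(n-1)$-dimensional traceless subspace on which $G_{\rm e}(z)$ is the scalar $\lambda(c(z)-d(z))$; the paper instead exhibits explicit eigenvectors in Lemmas \ref{sora1} and \ref{320}, which amounts to the same thing but your version makes the ``simple plus $(n-1)$-fold, additive at coincidences'' dictionary transparent. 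Second, you correctly flag that the cited lemmas do not by themselves cover the $n=2$ threshold in cases (4), (7), (10), because Lemma \ref{lem 1e res} is only stated for $n\geq 3$; the paper closes this gap only later, in Lemma \ref{n=2 res lemm}, and your direct argument with the traceless cancellation $\sum_j w_j=0$ reproduces exactly that lemma's conclusion. Your explicit verification that $G_0\cup\Gamma_l\subset\mathfrak C_-$ likewise makes precise something the paper only records in Section 4 ($\lambda_\infty\leq\lambda_s\leq\lambda_c$); just note that for $n=2$ the chain $a(0)(c(0)-d(0))<a(0)s(0)\leq b(0)$ involves divergent quantities and should be run at $z<0$ followed by a limit, exactly as the paper does. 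The one soft spot is your final remark on distinctness: ``strict monotonicity separates the relevant energies'' does not actually rule out a coincidence of the $\varrho$-zero with a $\delta_r$-zero in, say, case (11), in which case one would see two distinct energies with multiplicities $1$ and $n$ rather than three; the paper is equally silent on this, so it is a shared imprecision rather than a defect of your argument relative to the paper's.
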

\begin{proof}
This lemma follows from Lemmas \ref{F1} and  \ref{FFF}, and the fact
that $z\not=0$ is an eigenvalue if and only if $\bar
\delta_r(\lambda,\mu;z)\bar \delta_c(\lambda;z)=0$, and $0$ is an
threshold eigenvalue  or threshold resonance   if and only if $\bar
\delta_r(\lambda,\mu;0)\bar \delta_c(\lambda;0)=0$.
\end{proof}
By virtue of Lemma \ref{F1}, $\bar \delta_r(\lambda,\mu;\cdot)$ has
at most two zeros in $(-\infty,0)$ for $(\lambda,\mu)\in G_2$ or
$(\lambda,\mu)\in G_1\cup\Gamma_r$. Now we can see the explicit form
of  these eigenvectors.
%Without of loss generality,
In the case of $n=1$ we know that $\delta_c(\lambda;z)=1$. Hence
zeros of $\delta_r(\lambda,\mu;z)\delta_c(\lambda;z)$ coincides with
those of $\delta_r(\lambda,\mu;z)$. We  have the lemma.

\begin{lemma}[Eigenvalues of $H_{\lambda\mu}^{\rm e}$ for $n=1$]
\label{toto} We have the following facts:
\begin{itemize}
\item[(1)] $(\lambda,\mu)\in G_0\cup \Gamma_l$.
There is no eigenvalues in $(-\infty,0)$, and there is neither
threshold eigenvalue  nor threshold resonance.
\item[(2)] $(\lambda,\mu)\in G_1\cup \Gamma_r$.
There is a simple eigenvalue in $(-\infty,0)$, but  there is neither
threshold eigenvalue  nor threshold resonance.
\item[(3)] $(\lambda,\mu)\in G_2$.
There are two  eigenvalues in $(-\infty,0)$, but there is neither
threshold eigenvalue  nor threshold resonance.
\end{itemize}
\end{lemma}
\begin{proof}
This lemma follows from Lemmas \ref{F1} and the fact that $z\not=0$
is an eigenvalue if and only if $\bar \delta_r(\lambda,\mu;z)=0$,
and $0$ is an threshold eigenvalue  or threshold resonance   if and
only if $\bar \delta_r(\lambda,\mu;0)$.
\end{proof}

\begin{lemma}\label{sora1}
Let $n\geq1$. (1) Let $\lambda\not=0$. We assume  that $z_1,z_2\in
(-\infty,0)$ and $\delta_r(\lambda,\mu;z_k)=0$ (if they exist). Then
$1-\mu a(z_k)\not=0$ for $k=1,2$ and $ G_{\rm e}(z_k)Z_k=Z_k$ has
the solutions:
$$
Z_k= \VVV
{\frac{\lambda}{\sqrt{2}} \frac{nb(z_k)}{1-\mu a(z_k)}\\
1\\
\vdots\\
1},\quad k=1,2,$$ and the corresponding eigenfunctions,
$H_{\lambda\mu}^{\mathrm{e}} f_k=zf_k$,  are
\begin{align}
\label{e1}
    f_k(p)=\frac{ \lambda}{\sqrt{2}}\frac{1}{(2\pi) }
    \frac{1}{E(p)-z_k}
    \Big (\mu \frac{  n  b(z_k)}{1-\mu a(z_k)}
+\sum_{j=1}^n \cos p_j \Big ),\quad k=1,2.
\end{align}
(2) Let $\lambda=0$. We assume  that $z\in (-\infty,0)$ and
$\delta_r(0,\mu;z)=0$. Then $1-\mu a(z)=0$  and $ G_{\rm e}(z)Z=Z$
has the solution:
$$
Z=\VVV{1\\ \sqrt2\mu b(z)\\ \vdots\\ \sqrt2\mu b(z)}
$$
and the corresponding eigenfunction, $H_{\lambda\mu}^{\mathrm{e}}
f=zf$,  is
\begin{align}
\label{e11}
    f(p)=\frac{\mu}{(2\pi) }\frac{1}{E(p)-z}
\end{align}
\end{lemma}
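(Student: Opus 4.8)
The plan is to verify both parts by direct substitution into the explicit matrices \eqref{G=AB} and \eqref{G=AB1}: I read off the candidate eigenvector from the reduced system $(G_{\rm e}(z)-{\rm I})Z=0$, check it against the factorized determinant $\delta_r$ from Lemma \ref{FF}, and then pass to the eigenfunction through the Birman--Schwinger formula \eqref{eigen fe} of Lemma \ref{lem 1e}(b). No new analytic input is needed; everything reduces to algebra once the monotonicity facts of Lemma \ref{LemMon} are in hand.

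For part (1) I first secure the denominator. Assuming $\lambda\neq0$ and $\delta_r(\lambda,\mu;z_k)=0$, suppose toward a contradiction that $1-\mu a(z_k)=0$. Then $\mu a(z_k)=1$ forces $\mu\neq0$, while plugging $1-\mu a(z_k)=0$ into \eqref{sora} leaves $\delta_r(\lambda,\mu;z_k)=-n\lambda\mu b^2(z_k)$ for $n\geq2$ (respectively $-\lambda\mu b^2(z_k)$ for $n=1$). Since $b(z_k)>0$ for $z_k<0$ by Lemma \ref{LemMon} and $\lambda,\mu\neq0$, this is nonzero, contradicting $\delta_r=0$; hence $1-\mu a(z_k)\neq0$ and the proposed $w_0$ is well defined. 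Next I impose the symmetric ansatz $Z_k=(w_0,1,\dots,1)$. The $0$-th row of $(G_{\rm e}(z_k)-{\rm I})Z_k=0$ reads $(\mu a(z_k)-1)w_0+\frac{\lambda}{\sqrt2}b(z_k)\,n=0$, which gives exactly $w_0=\frac{\lambda}{\sqrt2}\frac{nb(z_k)}{1-\mu a(z_k)}$. By symmetry each of the remaining $n$ rows collapses to the single equation $\sqrt2\mu b(z_k)w_0+\lambda\alpha(z_k)-1=0$, where $\alpha(z)=c(z)+(n-1)d(z)$ as in \eqref{Notation}. Substituting the value of $w_0$ and clearing the factor $1-\mu a(z_k)$ turns this into $(1-\lambda\alpha(z_k))(1-\mu a(z_k))-n\lambda\mu b^2(z_k)=0$, which is precisely $\delta_r(\lambda,\mu;z_k)=0$ and therefore holds by hypothesis. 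Thus $Z_k$ solves $G_{\rm e}(z_k)Z_k=Z_k$, and the case $n=1$ is covered verbatim with $\alpha(z)=c(z)$. Feeding $Z_k$ into \eqref{eigen fe} and pulling out the common factor $\frac{\lambda}{\sqrt2}$ yields \eqref{e1}.

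For part (2) I set $\lambda=0$, which collapses $G_{\rm e}(z)$ to a matrix supported on its $0$-th column and reduces \eqref{sora} to $\delta_r(0,\mu;z)=1-\mu a(z)$; hence $\delta_r(0,\mu;z)=0$ is equivalent to $1-\mu a(z)=0$. Substituting $Z=(1,\sqrt2\mu b(z),\dots,\sqrt2\mu b(z))$, the $0$-th equation becomes $\mu a(z)-1=0$, which holds, and each $i$-th equation becomes $\sqrt2\mu b(z)\cdot1-\sqrt2\mu b(z)=0$, so $Z$ is an eigenvector. Since $\lambda=0$ annihilates the cosine sum in \eqref{eigen fe}, only the term $\mu w_0=\mu$ survives and \eqref{e11} follows. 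The single non-routine point in the whole argument is the non-vanishing of $1-\mu a(z_k)$ established at the outset of part (1), which guarantees the eigenvector is well defined; the rest is substitution.
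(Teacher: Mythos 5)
Your proposal is correct and follows essentially the same route as the paper: both verify the candidate eigenvector by direct substitution into the explicit matrix \eqref{G=AB}/\eqref{G=AB1}, using the relation extracted from $\delta_r(\lambda,\mu;z_k)=0$ to make the rows close up. Your justification that $1-\mu a(z_k)\neq 0$ (via $b(z_k)>0$ from Lemma \ref{LemMon} and the contradiction $\delta_r=-n\lambda\mu b^2(z_k)\neq 0$) is in fact spelled out a bit more carefully than the paper's one-line claim, but it is the same underlying observation.
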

\begin{proof}
We prove the case of $n\geq 2$. The proof for the case of $n=1$ is
similar. Since $\delta_r(\lambda,\mu;z)=0$, we see that
$$
\big(1-\mu a(z)\big)\Big(1-\lambda\big(c(z)+(n-1)d(z)\big)\Big)-n
\lambda\mu b^2(z)=0.$$ Then $1-\mu a(z)\not=0$ if and only if
$\lambda\not=0$, and we also have the algebraic relation
$$1-\lambda\big(c(z)+(n-1)d(z)\big)=\frac{n\lambda \mu b^2(z)}{1-\mu a(z)}.$$
From this relation it follows that
$$G_{\rm e}(z_k)
\VVV
{\frac{\lambda}{\sqrt{2}} \frac{nb(z_k)}{1-\mu a(z_k)}\\
1\\
\vdots\\z_k 1} = \VVV
{\frac{\lambda}{\sqrt{2}} \frac{nb(z_k)}{1-\mu a(z_k)}+\frac{\lambda}{\sqrt{2}}nb(z_k)\\
\lambda\frac{n\mu b(z_k)^2}{1-\mu a(z_k)}+\lambda c(z_k)+\lambda(n-1) d(z_k)\\
\vdots\\
\lambda\frac{n\mu b(z_k)^2}{1-\mu a(z_k)}+\lambda
c(z_k)+\lambda(n-1) d(z_k)}= \VVV
{\frac{\lambda}{\sqrt{2}} \frac{nb(z_k)}{1-\mu a(z_k)}\\
1\\
\vdots\\
1}.
$$
Then $G_{\rm e}(z_k)Z_k=Z_k$ for $\lambda\not=0$. In the case of
$\lambda=0$ we can also see that
$$G_{\rm e}(z)
\VVV
{1\\
\sqrt 2 \mu b(z)\\
\vdots\\
\sqrt 2 \mu b(z)} = \VVV
{\mu a(z)\\
\sqrt 2 \mu b(z)\\
\vdots\\
\sqrt 2 \mu b(z)}= Z.
$$
Then the lemma is proven.
\end{proof}

Next we show the eigenfunction corresponding to zeros of
$\delta_c(\lambda;\cdot)$.
\begin{lemma}\label{320}
Let $n\geq 2$,  $z\in (-\infty,0)$ and $\bar \delta_c(\lambda;z)=0$.
I.e., $\lambda=\frac{1}{c(z)-d(z)}$. Then the solutions of $ G_{\rm
e}(z)Z=Z$ are given by
\begin{align}
\label{ev} Z_1=\VVV{0\\ 1\\-1\\0\\ \vdots\\0}, Z_2=\VVV{0\\
1\\0\\-1\\ \vdots\\0}, \cdots Z_{n-1}=\VVV{0\\ 1\\0\\0\\
\vdots\\-1},
\end{align}
and  hence
%again by Lemma \ref{lem 1e}
the corresponding eigenfunctions,
$H_{\lambda\mu}^{\mathrm{e}} g_j=zg_j $,  are %have the forms
\begin{align}\label{e2}
    g_j(p)=\frac{\lambda}{\sqrt 2} \frac{1}{(2\pi) }
    \frac{1}{E(p)-z}(\cos p_1-\cos p_{j+1}),\quad j=1,\dots,n-1.
\end{align}
In particular the multiplicity of eigenvalue $z$ is at least $n-1$.
\end{lemma}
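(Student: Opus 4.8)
The plan is to verify directly that each vector $Z_j$ in \eqref{ev} is fixed by the matrix $G_{\rm e}(z)$, and then to read off the eigenfunctions from the Birman--Schwinger correspondence in Lemma \ref{lem 1e}(b). The key structural observation is that the $Z_j$ are engineered so that their zeroth entry vanishes and the sum of their remaining entries is zero. This completely decouples them from the $a(z)$- and $b(z)$-entries of $G_{\rm e}(z)$, reducing the eigenvalue equation $G_{\rm e}(z)Z=Z$ to the single scalar relation $\lambda(c(z)-d(z))=1$, which is precisely the hypothesis $\bar\delta_c(\lambda;z)=0$.

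First I would perform the matrix multiplication $G_{\rm e}(z)Z_j$ using the explicit form \eqref{G=AB}. Writing the entries of $Z_j$ as $(w_0,\dots,w_n)$ with $w_0=0$, $w_1=1$, $w_{j+1}=-1$ and all other entries equal to zero, the zeroth component of $G_{\rm e}(z)Z_j$ is $\frac{\lambda}{\sqrt2}b(z)\sum_{k=1}^n w_k=0$, which agrees with $(Z_j)_0=0$. For a row $i\in\{1,\dots,n\}$ the entry proportional to $w_0$ drops out, and the rest contributes
$$
\lambda c(z)\,w_i+\lambda d(z)\sum_{k\neq i} w_k
=\lambda\bigl(c(z)-d(z)\bigr)w_i+\lambda d(z)\sum_{k=1}^n w_k
=\lambda\bigl(c(z)-d(z)\bigr)w_i ,
$$
since $\sum_{k=1}^n w_k=0$. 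Using $\lambda=1/(c(z)-d(z))$ this equals $w_i$, so $G_{\rm e}(z)Z_j=Z_j$ for every $j=1,\dots,n-1$.

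Next I would invoke Lemma \ref{lem 1e}(b). Since each nonzero $Z_j$ solves $G_{\rm e}(z)Z_j=Z_j$, the determinant $\det(G_{\rm e}(z)-{\rm I})$ vanishes and the function $f=C_1Z_j$ given by \eqref{eigen fe} is an eigenfunction of $H_{\lambda\mu}^{\mathrm{e}}$ associated with eigenvalue $z$. Substituting $w_0=0$, $w_1=1$, $w_{j+1}=-1$ into \eqref{eigen fe} yields exactly the expression \eqref{e2} for $g_j$. Because $z<0$, the factor $1/(E(p)-z)$ is bounded on $\mathbb{T}^n$, so each $g_j$ lies in $L^2_{\rm e}(\mathbb{T}^n)$ and is a genuine eigenfunction of $H_{\lambda\mu}^{\mathrm{e}}$.

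Finally, to conclude that the multiplicity of $z$ is at least $n-1$, I would check the linear independence of $Z_1,\dots,Z_{n-1}$, which is immediate since the $(n-1)\times(n-1)$ block formed by their entries in positions $2,\dots,n$ equals $-{\rm I}$. As $C_1$ is injective (because $(H_0-z)^{-1}$ is invertible), the eigenfunctions $g_1,\dots,g_{n-1}$ are linearly independent as well; equivalently, the functions $\cos p_1-\cos p_{j+1}$ are linearly independent and division by the nowhere-vanishing $E(p)-z$ preserves this. I do not anticipate any serious obstacle here: the only point demanding care is tracking the two cancellations $w_0=0$ and $\sum_{k\geq1}w_k=0$ through the multiplication, which is exactly what renders the $\mu$-dependent entries of $G_{\rm e}(z)$ irrelevant and isolates the condition $\lambda(c(z)-d(z))=1$.
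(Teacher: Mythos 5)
Your proposal is correct and follows essentially the same route as the paper: a direct verification that $G_{\rm e}(z)Z_j=Z_j$ using the cancellations $w_0=0$ and $\sum_{k\geq 1}w_k=0$ together with $\lambda(c(z)-d(z))=1$, followed by the Birman--Schwinger correspondence of Lemma \ref{lem 1e}(b) to produce the eigenfunctions \eqref{e2}. Your explicit linear-independence check for the multiplicity claim is a small addition the paper leaves implicit, but the argument is the same.
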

\begin{proof}
Since  $\lambda(c(z)-d(z))=1$, we see that
$$G_{\rm e}(z)
\VVV
{0\\
1\\
-1\\
0\\
\vdots\\
0 } = \VVV
{0\\
\lambda(c(z)-d(z))\\
\lambda(d(z)-c(z))\\
0\\
\vdots\\
0 } = \VVV
{0\\
1\\
-1\\
0\\
\vdots\\
0 }.
$$
Then $G_{\rm e}(z) Z_1=Z_1$. In the same way as $Z_1$ we can see
that $G_{\rm e}(z) Z_j=Z_j$ for $j=2,...,n-1$. Then the lemma is
proven.
\end{proof}

\subsection{Threshold eigenvalues and threshold resonances  for $H_{\lambda\mu}^{\mathrm{e}}$}
\label{subsec reso} In this section we study the spectrum located on
the left edge of the essential spectrum $[0,2n]$, i.e., $z=0$.
%\marginpar{located below the essential spectrum}
%Now let $\bar \delta_r(\lambda,\mu;0)=0$.
Suppose that $(\lambda,\mu)\in \mathfrak{H}_0$. Then it is possibly
$\bar \delta_r(\lambda,\mu;0)=0$ or $\bar
\delta_c(\lambda,\mu;0)=0$. By Corollary \ref{delta1 con} and
\eqref{K} we see that for $(\lambda,\mu)\in \mathfrak{H}_0$
\begin{align}
\begin{array}{ll}
\bar \delta_r(\lambda,\mu;0)\not=0\not=
1=\bar \delta_c(\lambda,\mu;0),&n=1,\\
\bar \delta_r(\lambda,\mu;0)\not=0,&n=2.
\end{array}
\end{align}
Hence we study zeros of $\bar\delta_r(\lambda,\mu;0)$ for $n\geq 3$,
and those of $\bar\delta_c(\lambda;0)$ for $n\geq 2$. We  set
$a(0)=a$ and $b(0)=b$, and both $a$ and $b$ are finite for $n\geq
3$. In these case however the proofs are similar to those of Lemmas
\ref{sora1} and \ref{320} where we discuss eigenvalues in
$(-\infty,0)$.
\begin{lemma}
\label{sora2} Let $n\geq 3$. (1) Let $\lambda\not=0$ and $\bar
\delta_r(\lambda,\mu;0)=0$. Then $1-\mu a\not=0$ and $ G_{\rm
e}(0)Z=Z$ has the  solution
$$
Z=\VVV
{\frac{\lambda}{\sqrt{2}} \frac{nb}{1-\mu a}\\
1\\
\vdots\\
1}
$$
and the corresponding equation $H_{\lambda\mu}^{\mathrm{e}} f=0$ has
the solution:
\begin{align}
\label{z0}
    f(p)=\frac{ \lambda}{\sqrt{2}}
    \frac{1}{(2\pi) }
    \frac{1}{E(p)}
    \Big (\mu \frac{  n  b}{1-\mu a}
+\sum_{j=1}^n \cos p_j \Big ).
\end{align}
(2) Let $\lambda=0$ and
 $\delta_r(0,\mu;0)=0$.
Then $1-\mu a=0$  and $ G_{\rm e}(0)Z=Z$ has the  solution:
$$
Z=\VVV{1\\ \sqrt 2 \mu b\\ \vdots\\ \sqrt 2 \mu b}
$$
and the corresponding equation $H_{\lambda\mu}^{\mathrm{e}} f=0$ has
the solution:
\begin{align}
\label{z1}
    f(p)=\frac{\mu}{(2\pi) }\frac{1}{E(p)} .
\end{align}
\end{lemma}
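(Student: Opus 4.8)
The plan is to mimic, line for line, the argument of Lemma~\ref{sora1}, the sole change being that we now work at the threshold $z=0$. Because $n\geq 3$, Lemma~\ref{Incl L2}(a) guarantees $1/E\in L^1(\mathbb T^n)$, so each of the integrals $a(0),b(0),c(0),d(0)$ converges; I write $a=a(0)$, $b=b(0)$, $c=c(0)$, $d=d(0)$, all finite. Consequently the factorization formula \eqref{sora} extends continuously to $z=0$, and the matrix $G_{\rm e}(0)$ is just \eqref{G=AB} evaluated with these limiting entries. The decisive structural point is that the $L^2$-Birman--Schwinger principle Lemma~\ref{lem 1e} must be replaced by its threshold form Lemma~\ref{lem 1e res}: as soon as I produce a vector $Z\in\mathbb C^{n+1}$ with $G_{\rm e}(0)Z=Z$, the function $f=\bar C_1Z$ given by \eqref{eigen fe res} is automatically an $L^1$-solution of $H_{\lambda\mu}^{\rm e}f=0$. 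Thus the proof reduces to a scalar dichotomy about $1-\mu a$ together with a one-line matrix verification.

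For part~(1), assume $\lambda\neq 0$ and $\bar\delta_r(\lambda,\mu;0)=0$, i.e. $(1-\mu a)\{1-\lambda(c+(n-1)d)\}-n\lambda\mu b^2=0$. I first exclude $1-\mu a=0$: if it vanished the equation would force $n\lambda\mu b^2=0$, yet $b=b(0)>0$ by Lemma~\ref{LemMon} and $\mu\neq 0$ (because $\mu a=1$), forcing $\lambda=0$ against the hypothesis. Hence $1-\mu a\neq 0$, and the relation rearranges to $1-\lambda(c+(n-1)d)=n\lambda\mu b^2/(1-\mu a)$. Substituting this identity while computing $G_{\rm e}(0)Z$ for $Z=(\tfrac{\lambda}{\sqrt2}\tfrac{nb}{1-\mu a},1,\dots,1)^{\top}$ collapses the zeroth coordinate to $w_0$ and each remaining coordinate to $1$, exactly as in the displayed calculation of Lemma~\ref{sora1}; therefore $G_{\rm e}(0)Z=Z$. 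Passing $Z$ through $\bar C_1$ and invoking Lemma~\ref{lem 1e res}(b) yields the threshold solution \eqref{z0}.

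For part~(2), set $\lambda=0$; then \eqref{sora} degenerates to $\delta_r(0,\mu;0)=1-\mu a$, so the hypothesis reads $1-\mu a=0$. At $\lambda=0$ only the first column of $G_{\rm e}(0)$ survives (top entry $\mu a$, lower entries $\sqrt2\mu b$), so $G_{\rm e}(0)Z=Z$ for $Z=(1,\sqrt2\mu b,\dots,\sqrt2\mu b)^{\top}$ amounts to the single equality $\mu a=1$ in the top slot, the lower slots holding identically. Lemma~\ref{lem 1e res}(b) then delivers \eqref{z1}.

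I expect no serious obstacle, since everything is algebraic once the finiteness of $a,b,c,d$ at $z=0$ is in place. The two points demanding care are, first, routing the conclusion through the $L^1$-threshold principle Lemma~\ref{lem 1e res} rather than Lemma~\ref{lem 1e} (the eigenfunction now lives in $L^1$, not $L^2$); and second, the positivity $b(0)>0$ from Lemma~\ref{LemMon}, which drives the dichotomy $1-\mu a\neq 0$ when $\lambda\neq 0$ versus $1-\mu a=0$ when $\lambda=0$. The analogous cosine-difference threshold vectors, should they be needed, would be produced exactly as in Lemma~\ref{320}.
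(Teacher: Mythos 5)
Your proposal is correct and follows essentially the same route as the paper, which simply says to repeat the proof of Lemma~\ref{sora1} with $z$ replaced by $0$; you have merely made explicit the two points the paper leaves implicit (finiteness of $a,b,c,d$ at $z=0$ for $n\geq 3$, and routing the conclusion through the $L^1$-threshold principle of Lemma~\ref{lem 1e res}). The algebraic verifications of $G_{\rm e}(0)Z=Z$ and the dichotomy on $1-\mu a$ match the paper's computation exactly.
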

\begin{proof}
Replacing $z$ in Lemma \ref{sora1} with $0$ we can prove the lemma
in the same way as that of  Lemma \ref{sora1}.
\end{proof}

Next we show the solution corresponding to zeros of
$\delta_c(\lambda;\cdot)$.
%\subsubsubsection{The case of $\delta_c(\lambda;z)=0$}
Similar to the case of $\delta_r(\lambda,\mu;z)=0$, we have the
lemma below.
\begin{lemma} \label{n res lemm}
Let $n\geq 2$ and  $\delta_c(\lambda;0)=0$, i.e.,
$\lambda=\lambda_c$. Then the solutions of $ G_{\rm e}(0)Z=Z$
%,\quad u\in \mathbb{C}^{n+1},$$
are given by \eqref{ev} and  hence
%again by Lemma \ref{lem 1e}
the corresponding equation $H_{\lambda\mu}^{\mathrm{e}} g_j=0$ has
the solutions
\begin{align}\label{z2}
    g_j(p)=\frac{\lambda_c}{\sqrt 2} \frac{1}{(2\pi) } \frac{1}{E(p)}(\cos p_1-\cos p_{j+1}),\quad j=1,\dots,n-1.
\end{align}
%In particular, the multiplicity of eigenvalue $z$ is at least $n-1$.
\end{lemma}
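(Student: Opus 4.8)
The plan is to follow the proof of Lemma \ref{320} line by line, with the spectral parameter $z\in(-\infty,0)$ replaced by $z=0$. First I would translate the hypothesis $\delta_c(\lambda;0)=0$ into a usable identity: by the definition \eqref{K}, for $n\geq2$ this means $(\lambda\alpha-1)^{n-1}=0$, i.e. $\lambda\alpha=1$, where $\alpha=\lim_{z\to0-}(c(z)-d(z))$ is finite and strictly positive by the monotonicity established in Lemma \ref{LemMon}. Thus throughout I may use the single algebraic relation $\lambda\big(c(0)-d(0)\big)=\lambda\alpha=1$, the limit $c(0)-d(0)$ being understood as $\alpha$.

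Next I would verify $G_{\rm e}(0)Z_j=Z_j$ for each $Z_j$ in \eqref{ev} by exactly the block computation of Lemma \ref{320}. Each $Z_j$ has vanishing zeroth entry and two nonzero entries, $+1$ in position $1$ and $-1$ in position $j+1$, whose sum is zero; hence every contribution coming from the first row and first column of $G_{\rm e}(0)$ (those carrying $\mu a(0)$, $\tfrac{\lambda}{\sqrt2}b(0)$ and $\sqrt2\,\mu b(0)$) cancels, and only the lower-right $c$--$d$ block acts. On that block the output in position $1$ is $\lambda\big(c(0)-d(0)\big)=1$ and in position $j+1$ is $\lambda\big(d(0)-c(0)\big)=-1$, so $G_{\rm e}(0)Z_j=Z_j$. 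Applying $\bar C_1$ to $Z_j$ then gives the eigenfunction: since $w_0=0$ the $\mu$-term drops and $\tfrac{\lambda}{\sqrt2}\sum_k w_k\cos p_k=\tfrac{\lambda}{\sqrt2}(\cos p_1-\cos p_{j+1})$, reproducing \eqref{z2} with $\lambda=\lambda_c$. For $n\geq3$ the matrix $G_{\rm e}(0)$ is genuinely defined entrywise, and Lemma \ref{lem 1e res} immediately upgrades $G_{\rm e}(0)Z_j=Z_j$ to the statement that $g_j=\bar C_1Z_j$ solves $H_{\lambda\mu}^{\mathrm{e}}g_j=0$.

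The main obstacle is $n=2$, where $a(0),b(0),c(0),d(0)$ each diverge (logarithmically, by Lemma \ref{lak}), so $G_{\rm e}(0)$ is not defined entrywise and the Birman--Schwinger Lemma \ref{lem 1e res}, stated only for $n\geq3$, does not apply. The saving observation is that the computation above never touches the individual coefficients: the cancellation of the first row and column leaves only the finite combination $c(0)-d(0)=\alpha$. To make this rigorous I would abandon the matrix picture for $n=2$ and check directly that $g_1(p)=\tfrac{\lambda_c}{\sqrt2}\tfrac{1}{2\pi}\tfrac{\cos p_1-\cos p_2}{E(p)}$ solves \eqref{Hf=0}. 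Multiplying by $E(p)$ removes the singularity; the $\mu$-integral $\int_{\mathbb{T}^2}g_1\,dp$ vanishes by the symmetry $p_1\leftrightarrow p_2$; and the remaining sum $\tfrac{\lambda}{(2\pi)^2}\sum_i\cos p_i\int\cos p_i\,g_1\,dp$ reduces, using $\int\cos p_i(\cos p_1-\cos p_2)/E\,dp=(2\pi)^2(c(0)-d(0))$ for $i=1$, its negative for $i=2$, and $0$ otherwise, to a multiple of $\lambda_c^2(c(0)-d(0))(\cos p_1-\cos p_2)$ that coincides with $E(p)g_1(p)$ once $\lambda_c\alpha=1$ is invoked. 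Finally I would record that each $g_j$ indeed lies in $L^1_{\rm e}(\mathbb{T}^n)$ (and for $n=2$ even in $L^2_{\rm e}$, by Lemma \ref{Incl L2}(c) since $|\cos p_1-\cos p_{j+1}|\leq C|p|^2$ and $2>\tfrac{4-n}{2}$), so the $g_j$ are legitimate solutions; their linear independence then yields the expected $(n-1)$-dimensional family.
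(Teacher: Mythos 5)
Your proposal is correct and follows essentially the same route as the paper: the paper's proof is literally ``replace $z$ by $0$ in the computation of Lemma \ref{320}'', which is your main argument, and your verification via $\lambda_c\alpha=1$ and the cancellation of the first row and column is exactly that computation. Your extra care for $n=2$ (where $a(0),b(0),c(0),d(0)$ diverge and $G_{\rm e}(0)$ is not defined entrywise) is a genuine point the one-line proof glosses over, but the paper covers it separately: the direct verification you carry out for $g_1$ is precisely the content of Lemma \ref{n=2 res lemm} and the remark following it, which notes that \eqref{z2} coincides with \eqref{F}.
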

\begin{proof}
Replacing $z$ in Lemma \ref{320} with $0$ we can prove the lemma in
the same way as that of  Lemma \ref{320}.
\end{proof}

Recall that
\begin{equation}
\label{integ finite} u_0=\frac{1}{(2\pi)^n}
\int_{\mathbb{T}^n}f(p)dp,\quad
u_j=\frac{1}{(2\pi)^n}\int_{\mathbb{T}^n} \cos p_j f(p)dp,\quad j=1,
...,n.
 \end{equation}
As was seen above the problem for $n\geq 3$ can be reduced to study
the spectrum  of $G_{\rm e}$ by the Birman-Schwinger principle, the
problem for $n=1,2$ should be however directly investigated.
\begin{lemma}\label{n=1 res lemm} Let $n=1$.
\begin{itemize}
\item[(1)] Suppose that $f\in L^1(\mathbb T)$ and
$H_{\lambda\mu}^{\mathrm{e}} f =0$. Then $f=0$. In particular
$H_{\lambda\mu}^{\mathrm{e}}$ has no threshold resonance.
\item[(2)] There is no non-zero $f$ such that
$f\in L^\epsilon(\mathbb T^2)\setminus L^1(\mathbb T^2)$ for some
$0< \epsilon<1$ and $H_{\lambda\mu}^{\mathrm{e}} f =0$. In
particular $H_{\lambda\mu}^{\mathrm{e}}$ has no super-threshold
resonance.
\end{itemize}
\end{lemma}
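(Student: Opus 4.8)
The plan rests on the fact that in these low dimensions every zero--energy solution is explicitly determined by its charges. Writing, for $n\in\{1,2\}$,
\[
u_0=\frac{1}{(2\pi)^n}\int_{\mathbb T^n}f\,dp,\qquad u_j=\frac{1}{(2\pi)^n}\int_{\mathbb T^n}\cos p_j\,f\,dp\ \ (1\le j\le n),
\]
equation \eqref{Hf=0} reads $E(p)f(p)=\varphi(p)$ with $\varphi(p)=\mu u_0+\lambda\sum_{j=1}^n u_j\cos p_j$, so away from the unique singular point $p=0$ one has $f=\varphi/E$. The feature I would exploit is that $\varphi$ is an \emph{even} trigonometric polynomial; since $E(p)\approx|p|^2$ near $0$, the candidate $f$ behaves there like $\varphi(0)/E(p)$, a singularity of constant sign whenever $\varphi(0)=\mu u_0+\lambda\sum_j u_j\neq0$.

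For (1), take $n=1$ and assume $f\in L^1(\mathbb T)$, so $u_0,u_1$ are finite. By Lemma \ref{Incl L2}(d) this forces $\varphi(0)=0$, i.e.\ $\lambda u_1=-\mu u_0$, whence $\varphi(p)=\mu u_0(1-\cos p)=\mu u_0\,E(p)$ and $f\equiv\mu u_0$ is constant. Self--consistency of a constant with its own charges gives $u_1=\frac{1}{2\pi}\int_{\mathbb T}\mu u_0\cos p\,dp=0$, and then $\varphi(0)=\mu u_0=0$, so $f\equiv0$. Hence the only $L^1$ zero--energy solution is trivial, and $H_{\lambda\mu}^{\mathrm{e}}$ has no threshold resonance (and, as $L^2(\mathbb T)\subset L^1(\mathbb T)$, no threshold eigenvalue either).

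For (2), take $n=2$ and suppose, for contradiction, that $f\neq0$, $f\in L^\epsilon(\mathbb T^2)\setminus L^1(\mathbb T^2)$ for some $0<\epsilon<1$, and $H_{\lambda\mu}^{\mathrm{e}}f=0$. The potential term is defined only if the charges $u_0,u_1,u_2$ are finite, so once more $f=\varphi/E$. Were $\varphi(0)=0$ we would have $\varphi(p)=O(|p|^2)$, hence $f$ bounded near $0$ and $f\in L^1(\mathbb T^2)$ by Lemma \ref{Incl L2}(e), contradicting $f\notin L^1$; therefore $\varphi(0)\neq0$. But then $f\approx\varphi(0)/E(p)$ keeps a fixed sign near $0$, and $\int_{\mathbb T^2}f\,dp$ diverges because $\int_{\mathbb T^2}dp/E(p)=(2\pi)^2 a(0)=+\infty$ for $n=2$ (cf.\ the logarithmic blow--up in Lemma \ref{lak}). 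Thus $u_0$ cannot be finite, contradicting the well--posedness of the equation. No such $f$ exists, so $H_{\lambda\mu}^{\mathrm{e}}$ has no super--threshold resonance.

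The main obstacle is to attach a meaning to $H_{\lambda\mu}^{\mathrm{e}}f=0$ for $f\notin L^1$, and the evenness of $\varphi$ is precisely what settles it. The even singularity $\varphi(0)/|p|^2$ lies in $L^\epsilon$ near $0$ exactly when $\epsilon<n/2$: for $n=1$ this already fails for $\epsilon\ge\tfrac12$, so a super--threshold resonance is excluded on integrability grounds alone, while for $n=2$ it reaches every $\epsilon<1$, and only the divergence of the charge $u_0$ (forced by the fixed sign) rules it out. The two supporting facts — $\varphi(0)=0$ makes $f$ integrable, and $\varphi(0)\neq0$ makes $u_0$ infinite — follow from Lemma \ref{Incl L2} and the identity $\int_{\mathbb T^n}dp/E=+\infty$ for $n\le2$.
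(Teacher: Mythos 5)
Your proof of item (1) is correct and is essentially the paper's own argument: $f\in L^1(\mathbb T)$ forces $\varphi(0)=\mu u_0+\lambda u_1=0$ by Lemma \ref{Incl L2}(d), hence $f\equiv\mu u_0$ is constant, and self-consistency of $u_1$ then gives $\mu u_0=0$, so $f=0$.

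Item (2) is where there is a genuine gap: you prove the wrong statement. Despite the ``$\mathbb T^2$'' in the printed statement (a typo carried over from Lemma \ref{n=2 res lemm}), the hypothesis ``Let $n=1$'' governs both items, so $H^{\mathrm e}_{\lambda\mu}$ is the one-dimensional operator and what must be excluded is a nonzero solution $f\in L^\epsilon(\mathbb T)\setminus L^1(\mathbb T)$. You instead set $n=2$ and argue about the two-dimensional operator, which is the content of Lemma \ref{n=2 res lemm}(2), not of this lemma. Your closing remark for $n=1$ --- that $\varphi(0)/E(p)\sim c/p^2$ lies in $L^\epsilon(\mathbb T)$ only for $\epsilon<\tfrac12$ --- does dispose of the ``in particular'' clause, since Definition \ref{reson def even}(3) requires membership in $L^\epsilon$ for \emph{every} $\epsilon\in(0,1)$; but it does not prove the claim as stated, which asserts nonexistence for \emph{some} $\epsilon\in(0,1)$, and $1/p^2$ does belong to, say, $L^{1/4}(\mathbb T)$. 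The paper's argument for $n=1$ is a self-consistency one: $f\notin L^1(\mathbb T)$ forces $\mu=0$, so $\varphi(p)=\lambda u_1\cos p$ and $u_1=\frac{\lambda u_1}{2\pi}\int_{\mathbb T}\frac{\cos^2p}{E(p)}\,dp$; since this integral diverges, $u_1=0$ and hence $f=0$. A smaller point inside your $n=2$ paragraph: when $\mu=0$ the divergence of $u_0$ alone does not contradict well-posedness, because $u_0$ then does not appear in \eqref{Hf=0}; one must instead observe that the charges $u_1,u_2$ (which do appear) diverge as well, or argue as the paper does in Lemma \ref{n=2 res lemm}(2).
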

\begin{proof}
(1) $H_{\lambda\mu}^{\mathrm{e}} f=0$ gives $f=\varphi/E$ and
$\varphi(p)=\mu u_0 +\lambda u_1 \cos p$ by \eqref{Hf=0}. From $f\in
L^1(\mathbb{T})$ it follows that $\varphi(0)=\mu u_0 +\lambda u_1
=0$. Hence
$$
f(p)=\frac{1}{E(p)}( 1 -\cos p)\mu u_0=\mu u_0.$$ Substituting this
into the second term in   \eqref{integ finite}, we get $ u_1=\mu u_0
\frac{1}{2\pi}\int_{\mathbb{T}}\cos t dt=0$,
which gives  $\mu u_0=0$ and $f=0$. \\
(2) Since $f\not\in L^1(\mathbb T)$. It must be that $\mu=0$ and $
f={\varphi}/{E}$ with $\varphi(p)=\lambda u_1 \cos p$. Hence
\begin{align*}
u_1=\frac{\lambda }{(2\pi)^2} \int_{\mathbb{T}} \frac{u_1\cos^2
p}{E(p)}dp.
\end{align*}
Then $u_1=0$, since $\int_{\mathbb{T}} \frac{\cos^2 p}{E(p)}dp
=\infty$. Then $f=0$ follows.
\end{proof}

Next we discuss the spectrum of $H_{\lambda\mu}^{\mathrm{e}}$ for
$n=2$ at the lower threshold
\begin{lemma}
\label{n=2 res lemm} Let $n=2$.
\begin{itemize}
\item[(1)] Suppose that $f\in L^1(\mathbb T^2)$ and
$H_{\lambda\mu}^{\mathrm{e}} f =0$. Then $(\lambda,\mu)\in
{\mathfrak C}_0 $ and
\begin{align}\label{F}
f(p)=\lambda_c  u_1\frac{\cos p_1-\cos p_2}{E(p)}.
\end{align}
In particular $f \in L^2(\mathbb T^2)$ and $H_{\lambda\mu}^{\mathrm{e}}$ has no threshold resonance. \\
\item[(2)] There is no non-zero $f$ such that
$f\in L^\epsilon(\mathbb T^2)\setminus L^1(\mathbb T^2)$ for some
$0< \epsilon<1$ and $H_{\lambda\mu}^{\mathrm{e}} f =0$. In
particular $H_{\lambda\mu}^{\mathrm{e}}$ has no super-threshold
resonance.
\end{itemize}
\end{lemma}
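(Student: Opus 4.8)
The plan is to handle both parts by reducing $H_{\lambda\mu}^{\mathrm e}f=0$ to its explicit form \eqref{Hf=0}, which for $n=2$ says $E(p)f(p)=\varphi(p)$ with
$$\varphi(p)=\mu u_0+\lambda\bigl(u_1\cos p_1+u_2\cos p_2\bigr),$$
where $u_0,u_1,u_2$ are the moments in \eqref{integ finite}. Thus every solution is of the form $f=\varphi/E$ with $\varphi$ a cosine polynomial, and everything reduces to pinning down the coefficients via the self-consistency $u_j=\frac1{(2\pi)^2}\int\cos p_j\,f$ together with the integrability hypothesis on $f$.

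For part (1) I would first apply Lemma \ref{Incl L2}(d): since $f\in L^1$ and $f=\varphi/E$, the numerator vanishes at the singular point, $\varphi(0)=\mu u_0+\lambda(u_1+u_2)=0$. Using $\mu u_0=-\lambda(u_1+u_2)$ this rewrites $\varphi(p)=-\lambda u_1(1-\cos p_1)-\lambda u_2(1-\cos p_2)$. Substituting $f=\varphi/E$ into the definitions of $u_1,u_2$ and using the convergent integrals
$$\kappa=\frac1{(2\pi)^2}\int_{\mathbb T^2}\frac{\cos p_1(1-\cos p_1)}{E}\,dp,\qquad \tau=\frac1{(2\pi)^2}\int_{\mathbb T^2}\frac{\cos p_1(1-\cos p_2)}{E}\,dp$$
(finite because their integrands are bounded near $p=0$) produces the system $u_1=-\lambda(\kappa u_1+\tau u_2)$ and $u_2=-\lambda(\tau u_1+\kappa u_2)$. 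The decisive point is the pair of identities $\kappa+\tau=\frac1{(2\pi)^2}\int_{\mathbb T^2}\cos p_1\,dp=0$ and $\tau-\kappa=c(0)-d(0)=\alpha$. Adding the two equations then forces $u_1+u_2=0$, while subtracting them gives $(u_1-u_2)(1-\lambda\alpha)=0$. For a nonzero $f$ one has $u_1=-u_2\neq0$, whence $\lambda\alpha=1$, i.e. $\lambda=\lambda_c$ and $(\lambda,\mu)\in\mathfrak C_0$; moreover $\mu u_0=-\lambda(u_1+u_2)=0$, so $\varphi=\lambda_c u_1(\cos p_1-\cos p_2)$ and $f$ is exactly \eqref{F}. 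Since $|\cos p_1-\cos p_2|\le E(p)$, the function $f$ is bounded, hence $f\in L^2(\mathbb T^2)$; thus $0$ is a genuine threshold eigenvalue and no threshold resonance occurs.

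For part (2) I would argue by contradiction, again writing $f=\varphi/E$. The structural observation is that $\varphi$ is a cosine polynomial, so $\nabla\varphi(0)=0$ and $\varphi(p)=\varphi(0)+O(|p|^2)$ near the origin. If $\varphi(0)=0$ then $f=\varphi/E$ is bounded, so $f\in L^2\subset L^1$, contradicting $f\notin L^1$; hence $\varphi(0)\neq0$. Then near $p=0$ both $f(p)$ and $\cos p_j\,f(p)$ behave like $2\varphi(0)/|p|^2$, a fixed-sign singularity that is not integrable on $\mathbb T^2$. Consequently the integrals $\int f$ and $\int\cos p_j\,f$ diverge, so in \eqref{Hf=0} the requirement that $\frac{\mu}{(2\pi)^2}\int f$ and $\frac{\lambda}{(2\pi)^2}\int\cos p_j\,f$ be finite forces $\mu=0$ and $\lambda=0$. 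But $\mu=\lambda=0$ gives $\varphi\equiv0$, contradicting $\varphi(0)\neq0$. Hence there is no nonzero $f\in L^\epsilon\setminus L^1$ solving the equation, i.e. no super-threshold resonance.

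The step requiring the most care, and the main obstacle, is the borderline behaviour at $p=0$. In part (1) the individual coefficients $b(0),c(0),d(0)$ diverge for $n=2$, so one must work only with the convergent combinations $\kappa,\tau$ and exploit the cancellation $\kappa+\tau=0$ (which is merely $\int_{\mathbb T^2}\cos p_1\,dp=0$). In part (2) one must make sure the relevant moments genuinely diverge rather than converge conditionally, and this is precisely what the one-signed nature of the singularity of $\varphi/E$ guarantees once $\varphi(0)\neq0$.
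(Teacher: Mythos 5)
Your proof is correct. Part (1) is essentially the paper's own argument: write $f=\varphi/E$, use Lemma \ref{Incl L2}(d) to get $\varphi(0)=\mu u_0+\lambda(u_1+u_2)=0$, and close the system on $u_1,u_2$; your identity $\kappa+\tau=0$ is exactly the paper's observation that $\int_{\mathbb{T}^2}\cos p_1(1-\cos p_1)E^{-1}dp=-\int_{\mathbb{T}^2}\cos p_1(1-\cos p_2)E^{-1}dp$, and your $\tau-\kappa=c(0)-d(0)=\alpha$ recovers $\lambda=\lambda_c$; packaging this as "add/subtract the two equations" is just a tidier presentation, and your pointwise bound $|\cos p_1-\cos p_2|\le E(p)$ gives $f\in L^\infty\subset L^2$ a bit more directly than invoking Lemma \ref{Incl L2}(c). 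Part (2) is where you genuinely diverge from the paper: the paper sets $\mu=0$, keeps $\lambda$, writes self-consistency equations whose individual integrals $\int\cos^2p_j/E$ actually diverge for $n=2$, and only reaches the contradiction after concluding $u_1=-u_2$ and $\lambda=\lambda_c$ so that $f\in L^2$. You instead observe that $\varphi(0)=0$ already forces $f$ bounded (hence $f\in L^1$, contradiction), while $\varphi(0)\neq0$ makes the singularity of $f$ one-signed and non-integrable in dimension $2$, so that well-definedness of the products $\mu u_0$ and $\lambda u_j$ in \eqref{Hf=0} forces $\mu=\lambda=0$ and hence $\varphi\equiv0$. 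This is cleaner and avoids the formally divergent integrals in the paper's version; the only point to make explicit is that one should first read \eqref{Hf=0} as saying $Ef$ equals a trigonometric polynomial with \emph{finite} coefficients $\mu u_0,\lambda u_j$, which is exactly the convention the paper itself uses for the super-threshold resonance in Proposition \ref{takahashi}. Both routes yield the same conclusion.
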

\begin{proof}
(1) Consider $H_{\lambda\mu}^{\mathrm{e}} f=0$ in
$L^1(\mathbb{T}^2)$. We can take $ f={\varphi}/{E}$ and
$\varphi(p)=\mu u_0 +\lambda u_1 \cos p_1 +\lambda u_2 \cos p_2$.
Since $f\in L^1(\mathbb{T}^2)$, we get $\varphi(0)=\mu u_0
+\lambda(u_1+u_2)  =0 $ and so
$$
f(p)=\frac{\lambda}{E(p)}\left(   -u_1( 1 -\cos p_1) -u_2( 1 -\cos
p_2)\right)
$$
By
%Replacing into formulas of $u_1$ and $u_2$ in
\eqref{integ finite} we obtain
\begin{align*}
u_1&=-\frac{\lambda }{(2\pi)^2}\left (
u_1\int_{\mathbb{T}^2} \frac{\cos p_1( 1 -\cos p_1)}{E(p)}dp +u_2\int_{\mathbb{T}^2} \frac{\cos p_1( 1 -\cos p_2)}{E(p)}dp\right),\\
u_2&=-\frac{\lambda }{(2\pi)^2}\left ( u_1\int_{\mathbb{T}^2}
\frac{\cos p_2( 1 -\cos p_1)}{E(p)}dp +u_2\int_{\mathbb{T}^2}
\frac{\cos p_2( 1 -\cos p_2)}{E(p)}dp\right).
\end{align*}
Since $ \int_{\mathbb{T}^2} \frac{\cos p_1( 1 -\cos
p_1)}{E(p)}dp=-\int_{\mathbb{T}^2} \frac{\cos p_1( 1 -\cos
p_2)}{E(p)}dp$, we get
\begin{align*}
u_1&=\frac{\lambda }{(2\pi)^2}\left(u_2-u_1\right)\left (
\int_{\mathbb{T}^2} \frac{\cos p_1( 1 -\cos p_1)}{E(p)}dp\right)
,\\
u_2&=\frac{\lambda }{(2\pi)^2}\left(u_1-u_2\right)\left (
\int_{\mathbb{T}^2} \frac{\cos p_2( 1 -\cos p_2)}{E(p)}dp\right)
\end{align*}
and hence $u_1=-u_2$. Consequently, $\mu u_0=0$, and the solution of
$H_{\lambda\mu}^{\mathrm{e}} f=0$ is of the form
\begin{gather}
\label{toto2} f(p)=\lambda u_1\frac{\cos p_1-\cos p_2}{E(p)}\in
L^2(\mathbb T^2).
\end{gather}
Inserting this into the definition of $u_1$, we have
$\frac{\lambda}{(2\pi)^2}\int_{\mathbb{T}^2}\frac{\cos p_1(\cos
p_1-\cos p_2)}{E (p)} dp=1$ and thus taking $\lambda=\lambda_c$ we
can see that \eqref{F}
is the solution of $H_{\lambda\mu}^{\mathrm{e}} f=0$. Notice that $u_0=0$ follows from \eqref{toto2}.\\
(2) Since $f\not\in L^1(\mathbb T^2)$. It must be that $\mu=0$ and $
f={\varphi}/{E}$ with $\varphi(p)=\lambda u_1 \cos p_1 +\lambda u_2
\cos p_2$. Hence
\begin{align*}
u_1&=\frac{\lambda }{(2\pi)^2}
\int_{\mathbb{T}^2} \frac{u_1\cos^2 p_1+u_2\cos p_1\cos p_2}{E(p)}dp ,\\
u_2&=-\frac{\lambda }{(2\pi)^2} \int_{\mathbb{T}^2} \frac{u_1\cos
p_2\cos p_1+u_2\cos^2 p_2}{E(p)}dp.
\end{align*}
Then $u_1=-u_2$ and $1=\frac{\lambda}{(2\pi)^2} \int_{\mathbb{T}^2}
\frac{\cos p_1(\cos p_1+\cos p_2)}{E(p)}dp$. Thus
$\lambda=\lambda_c$. Then $f$ is given by \eqref{toto2}, but $f\in
L^2(\mathbb T^2)$. This contradicts with $f\not \in  L^1(\mathbb
T^2)$.
\end{proof}

\begin{remark}
The Birman-Schwinger principle is valid for $n\geq 3$, but Lemma
\ref{n res lemm} tells us that the Birman-Schwinger principle is
valid for  $n=2$. Furthermore in Lemma \ref{n=2 res lemm} it can be
seen that
%$\lambda_\ast=\lambda_c$ and
$g_1$ given by \eqref{z2} coincides with \eqref{F}.
\end{remark}

%Let $M$ be some functional Banach space on $\mathbb{T} ^n$.

\begin{lemma}[Threshold eigenvalues and threshold resonances of $H_{\lambda\mu}^{\rm e}$] \label{thres and res}
(1)-(5) follow:
\begin{itemize}
\item[(1)] Let $n=1$.
{Then $0$ is none of a   threshold eigenvalue, a   threshold
resonance and a super-threshold resonance.}
\item[(2)] Let $n=2$.
Then $0$ is a   threshold eigenvalue with \eqref{z2}  for
$(\lambda,\mu)\in {\mathfrak C}_0 $ and its multiplicity is one.

\item[(3)] Let $n=3,4$.
Suppose $(\lambda,\mu)\in {\mathfrak{H}_0}$. Then $0$ is a
threshold resonance with eigenvector \eqref{z0}  for
$\lambda\not=0$, and \eqref{z1} for $\lambda=0$, i.e.,
$(\lambda,\mu)=(0,1/a)$.
\item[(4)]
Let $n=3,4$. Suppose $(\lambda,\mu)\in {\mathfrak{H}_0}$. Then $0$
is a   threshold eigenvalue with \eqref{z2}  for $\lambda=\lambda_c$
and its  multiplicity is $n-1$.

\item[(5)]
 Let $n\geq 5$.
Suppose $(\lambda,\mu)\in {\mathfrak{H}_0}$.
 Then
$0$ is  a
  threshold eigenvalue with eigenvector
\eqref{z0}  for $\lambda_c\not=\lambda\not=0$ and  multiplicity one,
\eqref{z0} and \eqref{z2} for $\lambda=\lambda_c$ and  multiplicity
$n$, and \eqref{z1} for $\lambda=0$, i.e., $(\lambda,\mu)=(0,1/a)$,
and  multiplicity one.
\end{itemize}
\end{lemma}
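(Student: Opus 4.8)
The plan is to reduce the whole statement to deciding, for each explicit threshold solution already produced in the preceding lemmas, whether it lies in $L^2$, in $L^1\setminus L^2$, or outside $L^1$, and to extract that information from the order of vanishing of the numerator at $p=0$ via Lemma \ref{Incl L2}. The candidate solutions are in hand: by Lemma \ref{sora2} the zeros of $\delta_r$ produce \eqref{z0} (when $\lambda\neq0$) and \eqref{z1} (when $\lambda=0$), while by Lemma \ref{n res lemm} the zeros of $\delta_c$ produce the $n-1$ functions \eqref{z2}. For $n=1,2$ the matrix $G_{\rm e}(0)$ does not control $L^2$-membership, so there I would rely directly on Lemmas \ref{n=1 res lemm} and \ref{n=2 res lemm}. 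Writing every solution as $f=\varphi/E$, the classification turns on the single question of whether $\varphi(0)=0$.

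The first step is therefore to separate the two families by the vanishing order of $\varphi$ at the origin. For the $\delta_r$-solutions I would verify $\varphi(0)\neq0$: in \eqref{z1} the numerator is the constant $\mu/(2\pi)=1/(2\pi a)\neq0$, and in \eqref{z0} the numerator at $p=0$ equals $\frac{\lambda}{\sqrt2}\bigl(\mu\frac{nb}{1-\mu a}+n\bigr)$, which, using $a-b=1/n$ at $z=0$ from Lemma \ref{alpha - b}, simplifies to $\frac{\lambda}{\sqrt2}\,n\,\frac{1-\mu/n}{1-\mu a}$; since $(\lambda,\mu)\in\mathfrak{H}_0$ forces $\mu\neq n$ by the asymptote relation \eqref{f2}, and $1-\mu a\neq0$ for $\lambda\neq0$, this is nonzero. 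For the $\delta_c$-solutions \eqref{z2} the numerator $\cos p_1-\cos p_{j+1}$ vanishes to second order, $|\cos p_1-\cos p_{j+1}|\leq C|p|^2$ near $0$.

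The second step applies Lemma \ref{Incl L2} dimension by dimension. Part (1) is immediate from Lemma \ref{n=1 res lemm}. For part (2), Corollary \ref{delta1 con} rules out any $\delta_r$-contribution, and \eqref{z2} with its second-order zero lies in $L^2$ by Lemma \ref{Incl L2}(c) (here $\alpha_n=2>(4-n)/2=1$), giving a single threshold eigenvalue on $\mathfrak{C}_0$, consistent with Lemma \ref{n=2 res lemm}. For $n=3,4$ the $\delta_r$-solutions satisfy $\varphi(0)\neq0$, hence lie in $L^1$ by Lemma \ref{Incl L2}(a) but not in $L^2$ by Lemma \ref{Incl L2}(b); they are therefore threshold resonances, which is part (3). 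The $\delta_c$-solutions, having a second-order zero, lie in $L^2$ by Lemma \ref{Incl L2}(c) (as $2>(4-n)/2$ for $n=3,4$), giving an $(n-1)$-fold threshold eigenvalue, which is part (4). For $n\geq5$, Lemma \ref{Incl L2}(a) places every $\varphi/E$ with $\varphi\in C(\mathbb{T}^n)$ in $L^2$, so all surviving solutions are threshold eigenvalues and only the counting remains.

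The multiplicity bookkeeping of part (5) is the step I expect to require the most care. When $\lambda\neq0,\lambda_c$ on $\mathfrak{H}_0$, only $\delta_r$ vanishes, and since the vectors \eqref{ev} solve $G_{\rm e}(0)Z=Z$ only when $\lambda=\lambda_c$, the eigenspace is one-dimensional and spanned by \eqref{z0}; for $\lambda=0$, i.e. $(\lambda,\mu)=(0,1/a)$, the single eigenvector \eqref{z1} gives multiplicity one. At $\lambda=\lambda_c$ both factors vanish and the contributions combine: the $n-1$ vectors \eqref{ev} behind \eqref{z2} all have zeroth component $0$, whereas the vector behind \eqref{z0} has zeroth component $\frac{\lambda}{\sqrt2}\frac{nb}{1-\mu a}\neq0$, so the $n$ eigenvectors are linearly independent and the multiplicity is exactly $n$. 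The two genuinely delicate points are checking that $\varphi(0)\neq0$ is forced uniformly along the entire hyperbola (so that Lemma \ref{Incl L2}(b) really excludes $L^2$ for $n=3,4$) and this independence count; once both are secured, assembling the five cases is routine.
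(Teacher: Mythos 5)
Your proposal is correct and follows essentially the same route as the paper: classify each explicit solution $f=\varphi/E$ from Lemmas \ref{sora2} and \ref{n res lemm} by the vanishing order of $\varphi$ at $p=0$ (including the same computation $\frac{nb\mu}{1-\mu a}+n=\frac{n-\mu}{1-\mu a}\neq0$ on $\mathfrak{H}_0$) and then apply Lemma \ref{Incl L2}, handling $n=1,2$ via Lemmas \ref{n=1 res lemm} and \ref{n=2 res lemm}. Your explicit linear-independence argument for the multiplicity $n$ at $\lambda=\lambda_c$ in part (5) is a detail the paper leaves implicit, but it is not a different method.
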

%}
\begin{proof}
(1) follows from Lemma \ref{n=1 res lemm}. The solution of
$H_{\lambda\mu}^{\mathrm{e}} f=0$ is given by \eqref{z0}, \eqref{z1}
and \eqref{z2}. We note that $ \int_{|p|<\epsilon}
\frac{1}{E^2(p)}dp =\infty$ for $n=2, 3,4$ for any $\epsilon>0$,
 and $ \int_{|p|<\epsilon} \frac{1}{E^2(p)}dp <\infty$ for
$n\geq 5$ for any $\epsilon>0$. Since $(\lambda,\mu)\in
{\overline{\mathfrak{H}}}_0$, $n\not=\mu$, and we can see that
$$\frac{nb\mu}{1-\mu a}+\sum_{j=1}^n \cos 0=\frac{nb\mu}{1-\mu a}+n=n\left(\frac{1-\mu(a-b)}{1-\mu a}\right)=
\frac{n-\mu}{1-\mu a}\not=0.$$ Hence, using Lemma \ref{Incl L2}, we
obtain
\begin{align*}
&\eqref{z0},\eqref{z1} \in L^2(\mathbb{T}^n),\quad n\geq 5,\\
&
\eqref{z0},\eqref{z1} \in L^1(\mathbb{T}^n)\setminus L^2(\mathbb{T}^n),\quad n=3,4,\\
&\eqref{z2}\in L^2(\mathbb{T}^n),\quad n\geq2.
\end{align*}
(2) follows from Lemmas \ref{n=2 res lemm} and \ref{n res lemm}. (3)
follows from Lemmas \ref{n res lemm} and  \ref{sora2}. (4) follows
from Lemma \ref{n res lemm}. Finally (5) follows from Lemmas \ref{n
res lemm} and  \ref{sora2}.
\end{proof}

\begin{remark}
Let $n=3,4$ and $(\lambda_c,\mu)\in {\mathfrak H}_0$. By (3) and (4)
of Lemma \ref{thres and res}  it can be seen that $0$ is  a
threshold resonance and a   threshold eigenvalue.
\end{remark}

\section{Spectrum  of $H_{\lambda}^{\mathrm{o}}$}
\subsection{Birman-Schwinger principle for $z\in {\mathbb C}\setminus[0,2n]$}
In the previous sections, we study the spectrum of
$H_{\lambda\mu}^{\mathrm{e}}$ by using the Birman-Schwinger
principle
 for $n\geq3$,
and by directly solving $H_{\lambda\mu}^{\mathrm{e}} f=0$ for
$n=1,2$. In the case of $H_{\lambda}^{\mathrm{o}}$  we can proceed
in a similar way to the the case of $H_{\lambda\mu}^{\mathrm{e}}$
and rather easier than that of $H_{\lambda\mu}^{\mathrm{e}}$ as is
seen below.
%\subsubsection{The Birman-Schwinger principle for $H_{\lambda}^{\mathrm{o}}$}
Let $z\in {\mathbb C}\setminus[0,2n]$. As is  done for
$H_{\lambda\mu}^{\mathrm{e}}$,  we  can see that
\begin{align*}
   (H_0-z)^{-1}V_{\lambda}^{\rm o}=S_1S_2.
   \end{align*}
Here $S_1$ and $S_2$ are defined by
\begin{align*}%\label{B oper}
&S_1:{\mathbb C}^n\ni \VVV{w_1\\ \vdots\\ w_n }\mapsto (H_0-z)^{-1}
\frac{\lambda }{2}\sum_{j=1}^n w_j
s_j\in  L^2_{\rm o}(\mathbb{T}^n),\\
&S_2:L^2_{\rm o}(\mathbb{T}^n)\ni \phi\mapsto
\VVV{\langle \phi,s_1\rangle\\
\vdots\\
\langle \phi,s_n\rangle }\in \mathbb{C}^n.
\end{align*}
We set
%The function   $b_{ij}(z)=b_{ij}(z)= \langle (H_0-z)^{-1}\mathrm{s}_j,
%\mathrm{s}_i \rangle,$  $i,j=1,\dots,n,$ is a multiplication map on $\mathbb{C}$,
%and thence for the operator
$$
G_{\rm o}(z)=S_2S_1:\mathbb{C}^n\to \mathbb{C}^n.
$$
%corresponds $n\times n$ matrix operator.
The following assertion is  proved as Lemma \ref{lem 1e}, and then
we  omit the  proof.
\begin{lemma}[{Birman-Schwinger principle for $z\in {\mathbb C}\setminus[0,2n]$}]
\label{lem 1 o} \
\begin{itemize}
\item[(a)] $z\in \mathbb{C}\setminus
[0, 2n]$ is an eigenvalue of $H_{\lambda}^{\rm o}$ if and only if
$1\in \sigma(G_{\rm o}(z))$. %,  or $\det(G_{\rm o}(z)-{\rm I} )=0$.
\item[(b)] Let $z\in \mathbb{C}\setminus [0,2n]$ and
$Z=\VVV{ w_0\\  \vdots\\w_n}\in \mathbb{C}^{n}$
 be such that $G_{\rm o}(z)Z=Z$. Then
 $ f=S_1Z, $
\begin{align*}%\label{eigen f}
f(p)= \frac{1}{(2\pi) }\frac{1}{E(p)-z}\left ( \frac{\lambda}{\sqrt
{2}} \sum_{j=1}^n w_j\sin p_j \right )
\end{align*}%%
 is an eigenfunction of $H_{\lambda}^{\mathrm{o}}$, i.e.,
 $H_{\lambda}^{\mathrm{o}} f=zf$.
\end{itemize}
 \end{lemma}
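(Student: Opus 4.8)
The plan is to follow the proof of Lemma~\ref{lem 1e} line for line, with the even triple $(C_1,C_2,G_{\rm e})$ replaced by the odd triple $(S_1,S_2,G_{\rm o})$ and $V_{\lambda\mu}^{\rm e}$ replaced by $V_{\lambda}^{\rm o}$. Indeed the odd case is strictly simpler, since $V_{\lambda}^{\rm o}$ has the single homogeneous rank-$n$ form $\frac{\lambda}{2}\sum_{j=1}^n\langle\cdot,s_j\rangle s_j$ with no $c_0$ direction to track.

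\emph{Step 1 (reduction to a resolvent equation).} Since $z\in\mathbb{C}\setminus[0,2n]=\mathbb{C}\setminus\sigma(H_0)$, the operator $H_0-z$ is boundedly invertible, so $H_{\lambda}^{\rm o}f=zf$, i.e.\ $(H_0-V_{\lambda}^{\rm o})f=zf$, is equivalent to $f=(H_0-z)^{-1}V_{\lambda}^{\rm o}f$. Hence $z$ is an eigenvalue of $H_{\lambda}^{\rm o}$ if and only if $1\in\sigma\big((H_0-z)^{-1}V_{\lambda}^{\rm o}\big)$. Next I would invoke the factorization $(H_0-z)^{-1}V_{\lambda}^{\rm o}=S_1S_2$ recorded above, together with the standard identity $\sigma(S_1S_2)\setminus\{0\}=\sigma(S_2S_1)\setminus\{0\}$ valid for any bounded operators $S_1,S_2$. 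Because $1\neq 0$, this yields $1\in\sigma(S_1S_2)$ if and only if $1\in\sigma(S_2S_1)=\sigma(G_{\rm o}(z))$, which is precisely~(a).

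\emph{Step 2 (the eigenfunction).} For~(b), assume $G_{\rm o}(z)Z=S_2S_1Z=Z$ and set $f=S_1Z$. Then $S_1S_2f=S_1(S_2S_1Z)=S_1Z=f$, i.e.\ $f=(H_0-z)^{-1}V_{\lambda}^{\rm o}f$, so $H_{\lambda}^{\rm o}f=zf$ by Step~1. Substituting the definition of $S_1$, together with $s_j(p)=\frac{\sqrt{2}}{(2\pi)^{n/2}}\sin p_j$ and the fact that $(H_0-z)^{-1}$ acts as multiplication by $1/(E(p)-z)$, reproduces the asserted closed form of $f$.

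The main point to remark on, rather than a genuine obstacle, is that $f=S_1Z$ is an honest eigenfunction and not the trivial solution: this is automatic, since $Z\neq0$ forces some $w_j\neq0$, and the functions $\{\sin p_j\}_{j=1}^n$ are linearly independent in $L^2_{\rm o}(\mathbb{T}^n)$, so $f\neq0$. Beyond this, every ingredient has already been established for the even Hamiltonian and merely needs transcription, which is why the authors omit the proof.
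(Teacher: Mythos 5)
Your proposal is correct and is essentially the paper's own argument: the authors explicitly state that Lemma \ref{lem 1 o} ``is proved as Lemma \ref{lem 1e}'' and omit the details, and your Steps 1--2 are precisely that proof transcribed with $(C_1,C_2,G_{\rm e},V_{\lambda\mu}^{\rm e})$ replaced by $(S_1,S_2,G_{\rm o},V_{\lambda}^{\rm o})$. Your closing observation that $f=S_1Z\neq 0$ because the $\sin p_j$ are linearly independent is a small but welcome addition not spelled out in the paper.
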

%Recall that $s(z)$ is given in \eqref{s(z)}.
We see that $ \frac{1}{2}\langle \mathrm{s}_i,(H_0-z)^{-1}
\mathrm{s}_j \rangle =\frac{1}{(2\pi)^{n}} \int_{\mathbb{T}^n}
\frac{\sin p_i\sin p_j
 }{E(p)-z}dp=0$
by the fact that $E(p)=E(p_1,\dots,p_d)$ is even for any $p_j$.
Therefore
\begin{align}
G_{\rm o}(z)=\lambda s(z) I,
\end{align}
where $\displaystyle
s(z)=(2\pi)^{-n}\int_{\mathbb{T}^n}\frac{\sin^2 p_1}{E(p)-z}dp $ is
given by \eqref{sz}. Consequently we have for $n\geq 1$,
$$\delta_s(\lambda;z)=\det(G_{\rm o}(z)-{\rm I} )= (\lambda s(z)-1)^n.$$
Since $G_{\rm o}(z)$ is diagonal, it is very easy to find solution
of  $G_{\rm o}(z)Z=Z$. It has $n$ independent solutions:
$$
Z_j=\VVV{0\\ \vdots\\ 1\\ \vdots\\0} \! \leftarrow j_{th},\quad
j=1,...,n.$$ The corresponding eigenvector,
$H_{\lambda}^{\mathrm{o}} f_j=z f_j$,  is given by
\begin{align}\label{eigen Sin}
    f_j(p)=
    \frac{1}{E(p)-z}\frac{1}{(2\pi) } \frac{\lambda}{\sqrt 2} \sin p_j,\quad j=1,\dots,n,
\end{align}
where $\lambda=1/s(z)$. In particular the multiplicity of $z$ is
$n$.
\subsection{Birman-Schwinger principle for $z=0$}
We can extend the Birman-Schwinger principle for $z=0$. We extend
the eigenvalue equation $H_{\lambda}^{\mathrm{o}} f=0$ in $L_{\rm
o}^2(\mathbb{T}^n)$ to that in $L_{\rm o}^1(\mathbb{T}^n)$. We
consider the equation
\begin{equation}\label{Hf=0o}
 E(p) f(p)-\frac{\lambda}{(2\pi)^n}
 \sum_{j=1}^n
 \sin p_j \int_{\mathbb{T}^n}\sin p_j f(p)dp =0
\end{equation}
in  $L_{\rm o}^1(\mathbb{T}^n)$. We also describe \eqref{Hf=0o} as
$H_{\lambda}^{\mathrm{o}} f=0$. We can see that $\sin
p_j/E(p)\approx 1/|p|$ in the neighborhood of $p=0$, and then $\sin
p_j/E(p)\in L^1(\mathbb{T}^n)$ for $n\geq 2$. By (e) of Lemma
\ref{Incl L2} and $V_{\lambda}^{\rm o}f\in C(\mathbb{T}^n)$ we can
see that
\begin{align}
L^2_{\rm o}(\mathbb{T}^n)\ni f\mapsto     H_0^{-1}V_{\lambda}^{\rm
e}f \in
L^2_{\rm o}(\mathbb{T}^n),&\quad  n\geq 3,\\
L^1_{\rm o}(\mathbb{T}^n)\ni f\mapsto   H_0^{-1}V_{\lambda}^{\rm o}f
\in L^1_{\rm o}(\mathbb{T}^n) ,& \quad n\geq 2.
\end{align}
Thus for $n\geq 2$ we can extend operators $S_1$ and $S_2$. Let
$n\geq 2$ and $Z=\VVV{ w_0\\  \vdots\\w_n}$. $\bar S_1:
\mathbb{C}^{n} \to L^1_{\rm o}(\mathbb{T} ^n)$ is  defined by
$$\bar S_1 Z=
\frac{1}{(2\pi) } \frac{\lambda}{\sqrt {2}} \frac{1}{E(p)}
 \sum_{j=1}^n w_j\sin p_j $$
and $\bar S_2:L_{\rm o}^1(\mathbb{T} ^n)\to \mathbb{C}^{n}$  by
$$\bar S_2:L_{\rm o}^1(\mathbb{T} ^n)\ni \phi\mapsto \VVV{
\int_{T^n} \phi(p) s_1(p) dp\\
\vdots\\
\int_{T^n} \phi(p) s_n(p) dp }\in \mathbb{C}^{n}.$$ Then $
\overline{S}_1\overline{S}_2: L^1_{\rm o}(\mathbb{T} ^n)\to L^1_{\rm
o}(\mathbb{T} ^n) $. Thus $ G_{\rm
o}(0)=\overline{S}_2\overline{S}_1:\mathbb{C}^{n}\to \mathbb{C}^{n}
$ is described as an  $ n \times n $ matrix. Let $n\geq 2$. We have
(1) $\lim_{z\to 0} G_{\rm o}(z)=G_{\rm o}(0)$, and (2)
$\sigma(H_0^{-1}V_{\lambda}^{\rm o}) \setminus\{0\}=\sigma(G_{\rm
o}(0))\setminus\{0\} $. Hence for $n\geq2$,
\begin{align}
\label{tama1} G_{\rm o}(0)=\lambda s(0) I
\end{align}
and $\bar \delta_s(\lambda;z)$ is defined by
\begin{align}\label{tama2}
\bar \delta_s(\lambda;z)=\left\{
\begin{array}{ll}
\delta_s(\lambda;z)& z\in (-\infty,0),\\
(\lambda s(0)-1)^n &z=0.
\end{array}
\right.
\end{align}
\begin{remark}
In \eqref{tama1} and \eqref{tama2} we define $\bar
\delta_s(\lambda,z)$ and $G_{\rm o}$ for $n\geq2$. We note however
that $s(0)<\infty$ for $n\geq 1$. Thus $G_{\rm o}$ and $\bar
\delta_s(\lambda;z)$ are  well defined for  $n\geq 1$.
\end{remark}

\begin{lemma}[Birman-Schwinger principle for $z=0$]
\label{lem 1o res} Let $n\geq 2$.
\begin{itemize}
\item[(a)]
Equation $H_{\lambda}^{\mathrm{o}} f=0$ has a solution in
$L^1(\mathbb{T}^n)$ if and only if $1\in \sigma(G_{\rm o}(0))$.
\item[(b)]
 Let $Z=\VVV{ w_0\\  \vdots\\w_n}\in {\mathbb C}^n$ be the solution of $G_{\rm o}(0) Z=Z$ if and only if
$$f(p)=\bar S_1Z(p)=\frac{1}{(2\pi)^n} \frac{1}{E(p)}\frac{\lambda}{\sqrt 2} \sum_{j=1}^n w_j \sin p_j$$
is a solution of $H_{\lambda}^{\mathrm{o}} f=0$, where $w_1,\cdots,
w_n$ are actually described by
\begin{align}\label{wo}
w_j=\frac{\sqrt 2}{(2\pi)^{\frac{n}{2}}} \int_{\mathbb{T}^n}f(p)\sin
p_jdp,\quad j=1,\dots,n.
\end{align}
\end{itemize}
\end{lemma}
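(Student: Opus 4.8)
The plan is to mirror the argument of Lemma \ref{lem 1e res} for the even Hamiltonian, the odd case being in fact simpler because $G_{\rm o}(0)=\lambda s(0)\,{\rm I}$ is a scalar multiple of the identity. The starting observation is that, for $n\geq 2$, the operator $H_0^{-1}V_\lambda^{\rm o}$ maps $L^1_{\rm o}(\mathbb{T}^n)$ into itself: indeed $V_\lambda^{\rm o}f\in C(\mathbb{T}^n)$, and since $\sin p_j/E(p)\approx 1/|p|$ near the origin, part (e) of Lemma \ref{Incl L2} shows that $\sin p_j/E(p)\in L^1(\mathbb{T}^n)$ precisely when $n\geq 2$. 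Hence the equation $H_\lambda^{\mathrm{o}}f=0$, read in $L^1_{\rm o}(\mathbb{T}^n)$ as \eqref{Hf=0o}, is equivalent to the fixed-point equation $f=H_0^{-1}V_\lambda^{\rm o}f$ in $L^1_{\rm o}(\mathbb{T}^n)$, and on this space the factorization $H_0^{-1}V_\lambda^{\rm o}=\bar S_1\bar S_2$ holds by construction of $\bar S_1$ and $\bar S_2$.

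For part (a), I would argue that a nonzero $L^1_{\rm o}$ solution of $H_\lambda^{\mathrm{o}}f=0$ exists if and only if $1$ is an eigenvalue of $H_0^{-1}V_\lambda^{\rm o}=\bar S_1\bar S_2$. The finite-rank identity $\sigma(\bar S_1\bar S_2)\setminus\{0\}=\sigma(\bar S_2\bar S_1)\setminus\{0\}$ — already recorded just above this lemma as $\sigma(H_0^{-1}V_\lambda^{\rm o})\setminus\{0\}=\sigma(G_{\rm o}(0))\setminus\{0\}$ — then identifies this with the condition $1\in\sigma(\bar S_2\bar S_1)=\sigma(G_{\rm o}(0))$, since $1\neq 0$. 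This yields (a).

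For part (b), I would set up the explicit correspondence between eigenvectors. If $Z\in\mathbb{C}^n$ satisfies $G_{\rm o}(0)Z=\bar S_2\bar S_1 Z=Z$, put $f=\bar S_1 Z$; then $\bar S_1\bar S_2 f=\bar S_1(\bar S_2\bar S_1 Z)=\bar S_1 Z=f$, i.e. $f=H_0^{-1}V_\lambda^{\rm o}f$, which is exactly $H_\lambda^{\mathrm{o}}f=0$, and writing out $\bar S_1 Z$ produces the displayed formula for $f$. Conversely, given a solution $f$, put $Z=\bar S_2 f$; then $G_{\rm o}(0)Z=\bar S_2(\bar S_1\bar S_2 f)=\bar S_2 f=Z$. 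Finally, evaluating $Z=\bar S_2 f$ componentwise against $s_j(p)$ recovers the expressions \eqref{wo} for the coefficients $w_j$.

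The only point requiring care — and the main, rather mild, obstacle — is the passage to the $L^1$ setting at $z=0$, namely justifying that the resolvent factorization $(H_0-z)^{-1}V_\lambda^{\rm o}=S_1 S_2$ extends to $z=0$ as $\bar S_1\bar S_2$ on $L^1_{\rm o}(\mathbb{T}^n)$. This rests entirely on the integrability of $\sin p_j/E(p)$ for $n\geq 2$ supplied by Lemma \ref{Incl L2}(e), which is exactly why the statement is restricted to $n\geq 2$; everything else is the purely algebraic Birman--Schwinger bookkeeping already carried out for $H_{\lambda\mu}^{\mathrm{e}}$.
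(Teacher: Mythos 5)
Your argument is correct and coincides with the paper's proof, which simply refers back to Lemma \ref{lem 1e res}: the same Birman--Schwinger bookkeeping via the factorization $H_0^{-1}V_\lambda^{\rm o}=\bar S_1\bar S_2$, the identity $\sigma(\bar S_1\bar S_2)\setminus\{0\}=\sigma(\bar S_2\bar S_1)\setminus\{0\}$, and the eigenvector correspondence $f=\bar S_1 Z$, $Z=\bar S_2 f$. Your additional remark on why the $L^1$ extension works for $n\geq 2$ (integrability of $\sin p_j/E(p)$ via Lemma \ref{Incl L2}(e)) matches the justification the paper gives just before the lemma statement.
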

\begin{proof}
The proof is the same as that of Lemma \ref{lem 1e res}.
\end{proof}
\subsection{Eigenvalues of $H_\lambda^{\rm o}$}
Set
\begin{align*}
\lambda_s=\frac{1}{s(0)}.
\end{align*}
Note that $\lambda_s=1$ for $n=1$. We divide $(\lambda,\mu)$-plane
into two half planes $S_\pm$ and the boundary ${\mathfrak S}_0$. Set
$${\mathfrak S}_-=\{(\lambda,\mu)\in{\mathbb R}^2; \lambda<\lambda_s\},
{\mathfrak S}_0=\{(\lambda,\mu)\in {\mathbb R}^2;
\lambda=\lambda_s\}, {\mathfrak S}_+=\{(\lambda,\mu)\in{\mathbb
R}^2; \lambda>\lambda_s\}.$$ See Figure \ref{f82}.
\begin{figure}[h]
\begin{center}
\begin{tikzpicture}[scale=0.8]%%%%%%%%%%%%%%\UTF{2460}""%
\useasboundingbox(-1,-2) rectangle(4,4);
\draw[thick](-1,0)--(4,0)node[below]{$\lambda$};
\draw[thick](0,-1)--(0,4)node[left]{$\mu$};
\node[below]at(1.5,-1.5){$ $}; \draw[thin](-1,0)--(4,0);
\draw[thin](0,-1)--(0,4); \draw[very
thick](2,-1)node[below]{$\lambda_s$}--(2,4);
\node[scale=0.8]at(1,2){${ {\mathfrak S}_-}$};
\node[scale=0.8]at(3.9,2){${{\mathfrak S}_+}$};
\node[scale=0.8]at(2,4.3){${ {\mathfrak S}_0}$};
\end{tikzpicture}
\end{center}
\vspace{-0.5cm} \caption{Regions of $S_\pm$  for $n\geq 1$}
\label{f82}
\end{figure}
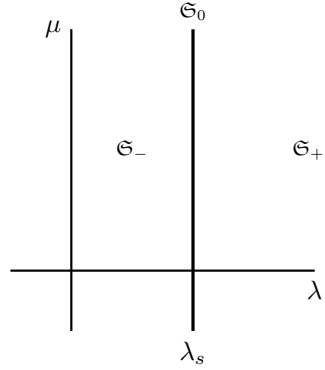

\begin{lemma}\label{Lemm delta sin }
Let $n\geq1$. Then (a)-(c) follow:
\begin{itemize}
\item[(a)] Let $(\lambda,\mu)\in {\mathfrak S}_-\cup {\mathfrak S}_0$. Then
$\bar \delta_s(\lambda;\cdot)$ has no zero in $(-\infty,0)$.
\item[(b)] Let $(\lambda,\mu)\in {\mathfrak S}_0$. Then $\bar \delta_s(\lambda_s;0)=0$
and
 $z=0$ has multiplicity $n$.
\item[(c)] Let $(\lambda,\mu)\in {\mathfrak S}_+$. Then
$\bar \delta_s(\lambda;\cdot)$ has a unique zero in $(-\infty,0)$
with multiplicity  $n$.
\end{itemize}
\end{lemma}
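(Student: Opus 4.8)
The plan is to mimic the analysis of $\delta_c$ carried out in Lemma~\ref{delta cos} and Lemma~\ref{FFF}, with the factor $c(z)-d(z)$ replaced by $s(z)$. Since $\bar\delta_s(\lambda;z)=(\lambda s(z)-1)^n$ holds on all of $(-\infty,0]$ by \eqref{tama2}, every zero of $\bar\delta_s(\lambda;\cdot)$ is exactly a zero of the scalar function $z\mapsto \lambda s(z)-1$, and each such zero automatically has multiplicity $n$ as a zero of the $n$-th power. Thus the whole statement reduces to locating the zeros of $\lambda s(z)-1$ on $(-\infty,0]$. All required tools are already available: by Lemma~\ref{LemMon}, $s(z)$ is strictly increasing, strictly positive and continuous on $(-\infty,0]$ with $\lim_{z\to-\infty}s(z)=0$; and $s(0)=1/\lambda_s<\infty$ by the definition of $\lambda_s$ together with the remark guaranteeing $s(0)<\infty$ for $n\ge1$.

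For part (a), take $(\lambda,\mu)\in{\mathfrak S}_-\cup{\mathfrak S}_0$, i.e. $\lambda\le\lambda_s$. If $\lambda\le0$, then $\lambda s(z)-1\le-1<0$ on $(-\infty,0)$ since $s(z)>0$, so there is no zero. If $0<\lambda\le\lambda_s$, then for every $z\in(-\infty,0)$ strict monotonicity gives $s(z)<s(0)$, whence $\lambda s(z)<\lambda s(0)\le\lambda_s s(0)=1$; hence $\lambda s(z)-1<0$ and again $\bar\delta_s(\lambda;\cdot)$ has no zero in $(-\infty,0)$. This is the exact analogue of Lemma~\ref{delta cos}(a).

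For part (b), take $\lambda=\lambda_s$; then $\lambda_s s(0)-1=\lambda_s\cdot(1/\lambda_s)-1=0$, so $\bar\delta_s(\lambda_s;0)=0$. To read off the multiplicity I would invoke the explicit diagonal form $G_{\rm o}(0)=\lambda s(0)I$ from \eqref{tama1}: at $\lambda=\lambda_s$ this is the identity, so $G_{\rm o}(0)Z=Z$ holds for every $Z\in\mathbb{C}^n$, and by Lemma~\ref{lem 1o res} this yields an $n$-dimensional solution space of $H_\lambda^{\rm o}f=0$, i.e. $z=0$ has multiplicity $n$. For part (c), take $(\lambda,\mu)\in{\mathfrak S}_+$, i.e. $\lambda>\lambda_s$. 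Then $\lambda s(0)-1>\lambda_s s(0)-1=0$, while $\lim_{z\to-\infty}(\lambda s(z)-1)=-1<0$; by continuity and strict monotonicity of $s$ the function $z\mapsto\lambda s(z)-1$ has exactly one zero $z_0\in(-\infty,0)$. At $z_0$ the matrix $G_{\rm o}(z_0)=\lambda s(z_0)I$ equals the identity, so exactly as in (b) the eigenvalue $z_0$ of $H_\lambda^{\rm o}$ has multiplicity $n$, and correspondingly $\bar\delta_s(\lambda;\cdot)=(\lambda s(\cdot)-1)^n$ has a unique zero of multiplicity $n$ in $(-\infty,0)$.

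I do not anticipate a genuine obstacle, since the argument is purely one-dimensional monotonicity plus the intermediate value theorem, entirely parallel to the $\delta_c$ case treated in Lemma~\ref{delta cos}. The only point requiring slight care is the bookkeeping of the multiplicity: one must note that it equals $n$ rather than one, precisely because $G_{\rm o}$ is a scalar multiple of the identity, so the whole of $\mathbb{C}^n$ solves $G_{\rm o}(z_0)Z=Z$ at a zero $z_0$; this is also the structural reason the $n$-th power appears in $\bar\delta_s$.
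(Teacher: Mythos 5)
Your proposal is correct and follows exactly the route the paper intends: the paper omits the proof, stating only that it is "similar to that of Lemma \ref{delta cos}," and your argument is precisely that analogue, replacing $c(z)-d(z)$ by $s(z)$ and using its monotonicity, positivity and vanishing at $-\infty$ from Lemma \ref{LemMon} together with $\lambda_s=1/s(0)$. The only addition is your explicit justification of the multiplicity $n$ via $G_{\rm o}(z)=\lambda s(z)I$, which is consistent with how the paper reads off multiplicity $n-1$ from the power in $\delta_c$.
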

\begin{proof}  The proof is similar to that of
Lemma \ref{delta cos}, and hence  we omit it.
\end{proof}
By Lemma \ref{Lemm delta sin } we can see spectral property of
$H_{\lambda}^{\mathrm{o}}$.
\begin{lemma}[Eigenvalues of $H_{\lambda}^{\mathrm{o}}$]
Let $n\geq1$.
\begin{itemize}
\item[(1)] $(\lambda,\mu)\in {\mathfrak S}_-\cup {\mathfrak S}_0$.
There is no eigenvalue in $(-\infty,0)$.
\item[(2)] $(\lambda,\mu)\in {\mathfrak S}_0$.
There is an $n$ fold threshold eigenvalue  or threshold resonance.
\item[(3)] $(\lambda,\mu)\in {\mathfrak S}_+$.
There is an $n$ fold eigenvalue in $(-\infty,0)$.
\end{itemize}
\end{lemma}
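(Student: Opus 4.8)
The plan is to derive all three statements from the single structural observation that the odd Birman--Schwinger operator $G_{\rm o}(z)=\lambda s(z)\,{\rm I}$ is a scalar multiple of the identity on $\mathbb{C}^n$, together with the monotonicity and the limiting behaviour of $s(z)$ already packaged in Lemma \ref{Lemm delta sin }. Since $\delta_s(\lambda;z)=(\lambda s(z)-1)^n$, a point $z$ solves $\bar\delta_s(\lambda;z)=0$ exactly when $\lambda s(z)=1$, and at such a $z$ the matrix $G_{\rm o}(z)-{\rm I}$ vanishes identically, so its eigenspace for the eigenvalue $1$ is all of $\mathbb{C}^n$. This is what forces the multiplicity to be exactly $n$ everywhere, with no delicate computation.

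First I would handle $z\in(-\infty,0)$. By the Birman--Schwinger principle of Lemma \ref{lem 1 o}, $z$ is an eigenvalue of $H_\lambda^{\rm o}$ if and only if $1\in\sigma(G_{\rm o}(z))$, equivalently $\bar\delta_s(\lambda;z)=0$. Statement (1) is then immediate from Lemma \ref{Lemm delta sin }(a): for $(\lambda,\mu)\in{\mathfrak S}_-\cup{\mathfrak S}_0$ there is no zero in $(-\infty,0)$, hence no negative eigenvalue. For statement (3), Lemma \ref{Lemm delta sin }(c) supplies a unique $z_0\in(-\infty,0)$ with $\lambda s(z_0)=1$; since $G_{\rm o}(z_0)={\rm I}$ fixes every vector, the standard basis $Z_1,\dots,Z_n$ all solve $G_{\rm o}(z_0)Z=Z$, and the associated eigenfunctions are the $n$ functions \eqref{eigen Sin} at $z=z_0$, which are linearly independent because $\sin p_1,\dots,\sin p_n$ are. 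Hence $z_0$ is an eigenvalue of multiplicity exactly $n$.

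Next I would treat the threshold $z=0$ for $(\lambda,\mu)\in{\mathfrak S}_0$, i.e.\ $\lambda=\lambda_s=1/s(0)$. For $n\geq2$ the extended principle of Lemma \ref{lem 1o res} applies: $H_\lambda^{\rm o}f=0$ has an $L^1$-solution iff $1\in\sigma(G_{\rm o}(0))$, which holds precisely for $\lambda s(0)=1$; again $G_{\rm o}(0)={\rm I}$ is scalar, so the solution space is $n$-dimensional and spanned by \eqref{eigen Sin} evaluated at $z=0$. This yields statement (2) for $n\geq2$, with the claimed $n$-fold multiplicity.

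The main obstacle is the fine classification at $z=0$, which Lemma \ref{Lemm delta sin } does not settle: whether $0$ is a genuine threshold eigenvalue, a threshold resonance, or a super-threshold resonance is decided only by the $L^p$-membership of the explicit threshold function $\sin p_j/E(p)$. Using $\sin p_j/E(p)\sim p_j/|p|^2$ near the origin and Lemma \ref{Incl L2}, one finds this function lies in $L^2$ for $n\geq3$ (threshold eigenvalue) and in $L^1\setminus L^2$ for $n=2$ (threshold resonance). The genuinely delicate case is $n=1$, where $\lambda_s=1$ and $\sin p/E(p)\sim 1/p\notin L^1(\mathbb{T})$, so Lemma \ref{lem 1o res} is unavailable and the threshold must be examined by solving $H_\lambda^{\rm o}f=0$ directly; here the phrase ``threshold eigenvalue or threshold resonance'' should be understood as locating threshold spectrum whose precise nature is a super-threshold resonance (cf.\ Proposition \ref{takahashi}).
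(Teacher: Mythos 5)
Your argument is correct and follows essentially the same route as the paper: the paper's proof is a one-line appeal to Lemma \ref{Lemm delta sin } together with the Birman--Schwinger equivalence, and you take exactly that route, merely making explicit the reason the multiplicity is $n$ (at a zero of $\bar\delta_s$ one has $\lambda s(z)=1$, so $G_{\rm o}(z)={\rm I}$ and the eigenspace is all of $\mathbb{C}^{n}$). Your closing caveat about $n=1$ is also well taken: the paper itself later records (Proposition \ref{takahashi} and Lemma \ref{thres and res o}) that for $n=1$ the threshold point is a super-threshold resonance rather than a threshold eigenvalue or threshold resonance in the sense of Definition \ref{reson def even}, so item (2) of the statement must indeed be read with that proviso.
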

\begin{proof}
This lemma follows from Lemmas \ref{Lemm delta sin }, and the fact
that $z\not=0$ is an eigenvalue if and only if $\bar
\delta_s(\lambda;z)=0$, and $0$ is an threshold eigenvalue  or
threshold resonance   if and only if $\bar \delta_s(\lambda;0)=0$.
\end{proof}

\subsection{Threshold eigenvalues and threshold resonances for $H_{\lambda}^{\mathrm{o}}$}
\label{subsec reson} Threshold resonances and threshold eigenvalues
for $H_{\lambda}^{\mathrm{o}}$ can be discussed by the
Birman-Schwinger principle for $n\geq 2$.
\begin{lemma}\label{reson lem odd}
Let $n\geq 2$. Then the solutions of equation
$H_{\lambda}^{\mathrm{o}} f=0$ are given by
\begin{align}\label{sake}
f_j(p)= \frac{1}{(2\pi) } \frac{\lambda_s}{\sqrt{2}} \frac{\sin
p_j}{E(p)},\quad j=1,\dots,n.
\end{align}
\end{lemma}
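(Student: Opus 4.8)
The plan is to read off the solutions directly from the Birman--Schwinger principle at the threshold, Lemma \ref{lem 1o res}, exploiting the fact that the reduced matrix $G_{\rm o}(0)$ is a scalar multiple of the identity. Concretely, \eqref{tama1} records that $G_{\rm o}(0)=\lambda s(0)\,{\rm I}$ on $\mathbb{C}^n$, and since $n\geq 2$ ensures $\sin p_j/E(p)\in L^1(\mathbb{T}^n)$, the extended maps $\bar S_1,\bar S_2$, and hence the principle itself, are available at $z=0$.

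First I would analyze the eigenvector equation $G_{\rm o}(0)Z=Z$. As $G_{\rm o}(0)=\lambda s(0)\,{\rm I}$ is diagonal, a nonzero $Z\in\mathbb{C}^n$ can satisfy it only when $\lambda s(0)=1$, that is, when $\lambda=\lambda_s=1/s(0)$; in that case $G_{\rm o}(0)={\rm I}$ and the solution set is all of $\mathbb{C}^n$. By Lemma \ref{lem 1o res}(a) this is exactly the condition under which $H_{\lambda}^{\mathrm{o}}f=0$ admits a nonzero solution, so necessarily $\lambda=\lambda_s$. Taking the standard basis vectors $Z_1,\dots,Z_n$ of $\mathbb{C}^n$ and applying the map $\bar S_1$ as in Lemma \ref{lem 1o res}(b), I obtain
$$
f_j(p)=\bar S_1 Z_j(p)=\frac{1}{(2\pi)}\frac{\lambda_s}{\sqrt{2}}\frac{\sin p_j}{E(p)},\qquad j=1,\dots,n,
$$
which is exactly \eqref{sake}. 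Since the $Z_j$ span $\mathbb{C}^n$, the map $\bar S_1$ is linear and injective (as $\lambda_s\neq 0$ and the $\sin p_j$ are linearly independent), and Lemma \ref{lem 1o res}(b) sets up a bijection between solutions $Z$ and solutions $f$, the functions $f_1,\dots,f_n$ span the entire solution space and therefore exhaust all solutions.

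This argument is immediate once the diagonalization \eqref{tama1} is in hand, so there is no genuine obstacle for $n\geq 2$; the only point deserving attention is the validity of the threshold reduction itself. The hypothesis $n\geq 2$ enters precisely here, through the inclusion $\sin p_j/E(p)\in L^1(\mathbb{T}^n)$ noted before Lemma \ref{lem 1o res}, which legitimizes the $L^1$-extension of $S_1$ to $\bar S_1$. For $n=1$ this fails, as $\sin p_1/E(p)\approx 1/|p|$ is not integrable on $\mathbb{T}$, so that borderline case would need separate treatment outside the Birman--Schwinger framework; it is, however, excluded by the standing assumption $n\geq 2$.
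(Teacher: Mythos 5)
Your proposal is correct and follows essentially the same route as the paper, whose proof is the one-line observation that $\bar\delta_s(\lambda_s;0)=0$ together with the threshold Birman--Schwinger principle (Lemma \ref{lem 1o res}) yields \eqref{sake}. You simply fill in the details the paper leaves implicit: that $G_{\rm o}(0)=\lambda s(0)\,\mathrm{I}$ forces $\lambda=\lambda_s$ for a nontrivial solution, and that applying $\bar S_1$ to a basis of $\mathbb{C}^n$ exhausts the solution space.
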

\begin{proof}
From $\bar \delta(\lambda_s,0)=0$ and Lemma \ref{lem 1o res} the
lemma follows.
\end{proof}
For $n=1$ we can directly see that $H_{\lambda}^{\mathrm{o}} f=0$
has no solution in $L^1$, but it has a   super-threshold resonance.
We see this in the next proposition.
\begin{proposition}[Super-threshold resonance]
\label{takahashi} Let $n=1$ and $\lambda=\lambda_s=1$. Then
$H_{\lambda}^{\mathrm{o}} f=0$ has solution $f\in L_{\rm
o}^\epsilon(\mathbb{T})\setminus  L_{\rm o}^1(\mathbb{T})$ for any
$0< \epsilon <1$. I.e., $0$ is a    super-threshold resonance of
$H_{\lambda}^{\mathrm{o}}$.
\end{proposition}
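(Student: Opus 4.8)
The plan is to exhibit the resonance solution explicitly and then read off its integrability class directly from its singularity at $p=0$. First I would specialize the threshold equation \eqref{Hf=0o} to $n=1$, where $E(p)=1-\cos p$, so that it reduces to $(1-\cos p)f(p)=\frac{\lambda}{2\pi}w\sin p$ with $w=\int_{\mathbb{T}}\sin p\, f(p)\,dp$. Thus every solution must have the form
$$
f(p)=\frac{\lambda w}{2\pi}\,\frac{\sin p}{1-\cos p}.
$$

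The decisive algebraic observation is the identity $\frac{\sin^2 p}{1-\cos p}=1+\cos p$, coming from $\sin^2 p=(1-\cos p)(1+\cos p)$. Inserting the candidate into the definition of $w$ and using this identity gives
$$
w=\frac{\lambda w}{2\pi}\int_{\mathbb{T}}\frac{\sin^2 p}{1-\cos p}\,dp=\frac{\lambda w}{2\pi}\int_{-\pi}^{\pi}(1+\cos p)\,dp=\lambda w,
$$
so a nontrivial solution exists exactly when $\lambda=1$; since $s(0)=\frac{1}{2\pi}\int_{\mathbb{T}}\frac{\sin^2 p}{1-\cos p}\,dp=1$, this is precisely $\lambda=\lambda_s=1$. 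With $\lambda=1$ I would normalize and take $f(p)=\sin p/E(p)$, which is odd and solves $H_{\lambda}^{\mathrm{o}}f=0$.

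It then remains to determine the $L^\epsilon$-class of $f$. Near $p=0$ one has $\sin p\sim p$ and $1-\cos p\sim p^2/2$, so $f(p)\sim 2/p$, while $f$ is bounded and continuous away from the origin. Consequently $\int_{\mathbb{T}}|f(p)|^\epsilon\,dp$ behaves like $\int_0 |p|^{-\epsilon}\,dp$ near $0$, which converges for every $0<\epsilon<1$ but diverges for $\epsilon=1$. Hence $f\in L_{\rm o}^\epsilon(\mathbb{T})\setminus L_{\rm o}^1(\mathbb{T})$ for all $0<\epsilon<1$, and by Definition \ref{reson def even} the point $0$ is a super-threshold resonance of $H_{\lambda}^{\mathrm{o}}$.

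The step deserving the most care — and the reason a genuine solution exists at all despite $f\notin L^1$ — is that the moment $w=\int_{\mathbb{T}}\sin p\, f(p)\,dp$ must itself be finite for the equation to be meaningful. This is not automatic from $f\in L^\epsilon$, but it follows from the same cancellation: $\sin p\cdot f(p)=\sin^2 p/E(p)=1+\cos p$ is bounded, so the pole of $f$ is exactly absorbed when paired against $\sin p$. Checking that this pairing is well-defined, rather than merely that $f$ satisfies the pointwise equation, is the substantive obstacle; once it is in hand, the $L^1$-versus-$L^\epsilon$ dichotomy is a routine singularity count.
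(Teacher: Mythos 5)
Your proof is correct and follows essentially the same route as the paper's: both exhibit the explicit solution $f(p)=C\,\sin p/E(p)$ and read off $f\in L^\epsilon_{\rm o}(\mathbb{T})\setminus L^1_{\rm o}(\mathbb{T})$ from the $1/p$ singularity at the origin. Your extra steps --- verifying via $\sin^2 p/(1-\cos p)=1+\cos p$ that the moment $\int_{\mathbb{T}}\sin p\,f(p)\,dp$ is finite and that the self-consistency condition forces $\lambda=\lambda_s=1$ --- only make explicit what the paper leaves implicit.
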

\begin{proof}
$H_{\lambda_s}^o f=0$ yields that $f(p)=C \frac{\sin p}{E(p)}$,
where $C=\frac{\lambda_s }{2\pi}\int_\mathbb{T} \sin p f(p) dp.$
Note that however $\sin p/E(p) \not\in L^1(\mathbb{T})$, but we can
see that $\sin p/E(p)\in L^\epsilon(\mathbb{T})$ for any
$0<\epsilon<1$ since $\sin p/E(p)\sim 1/p$ near $p=0$ and
$\int_\mathbb{T} p^{-\epsilon}dp<\infty$.
\end{proof}

\begin{lemma}\label{thres and res o}
\begin{itemize}
\item[(1)]
Let $n=1$. Then $0$ is neither a   threshold resonance nor a
threshold eigenvalue, {but for $(\lambda_s,\mu)$, $0$ is a
super-threshold resonance.}
\item[(2)]
Let $n=2$. Then $0$ is  a   threshold resonance at
$\lambda=\lambda_s$.
\item[(3)]
Let $n\geq 3$. Then $0$ is a   threshold eigenvalue at
$\lambda=\lambda_s$ and its multiplicity is $n$.
\end{itemize}
\end{lemma}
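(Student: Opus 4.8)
The plan is to classify the integrability of the explicit threshold solutions already constructed and then to read off the type of threshold behaviour from Definition \ref{reson def even}. For $n\ge2$ every solution of $H_\lambda^{\mathrm o}f=0$ is, by Lemma \ref{reson lem odd}, a linear combination of the $n$ functions $f_j$ in \eqref{sake}, each of the form $f_j=\varphi_j/E$ with $\varphi_j(p)=\tfrac{\lambda_s}{\sqrt2\,(2\pi)}\sin p_j\in C(\mathbb T^n)$; for $n=1$ the only solution is the multiple $f(p)=C\sin p/E(p)$ furnished by Proposition \ref{takahashi}. Hence in every case the question reduces to deciding whether $\sin p_j/E(p)$ lies in $L^2$, in $L^1\setminus L^2$, or in $L^\epsilon\setminus L^1$. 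Since $\varphi_j(0)=0$ and $|\varphi_j(p)|\le C|p|$ near the origin, the exponent to feed into Lemma \ref{Incl L2} is $\alpha=1$.

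For $n\ge3$ (part (3)) Lemma \ref{Incl L2}(a) already gives $f_j\in L^2(\mathbb T^n)$ when $n\ge5$, while for $n=3,4$ the hypothesis $\alpha>\tfrac{4-n}{2}$ of Lemma \ref{Incl L2}(c) reads $1>\tfrac12$ and $1>0$ and is met; thus each $f_j\in L^2_{\mathrm o}(\mathbb T^n)$ and, by Definition \ref{reson def even}(1), $0$ is a threshold eigenvalue, of multiplicity $n$ because the $f_j$ are linearly independent. For $n=2$ (part (2)) Lemma \ref{Incl L2}(e) with $\alpha=1>2-n=0$ yields $f_j\in L^1(\mathbb T^2)$; to see that $f_j\notin L^2$ I would compute $\|f_j\|_2^2$ directly as a constant times $\int_{\mathbb T^2}\sin^2p_j/E(p)^2\,dp$ and use $E(p)\approx|p|^2/2$, $\sin^2p_j\approx p_j^2$ near $0$ to exhibit the integrand $\sim p_j^2/|p|^4$, whose planar integral diverges logarithmically. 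Hence $f_j\in L^1\setminus L^2$ and, by Definition \ref{reson def even}(2), $0$ is a threshold resonance at $\lambda=\lambda_s$.

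For $n=1$ (part (1)) the solution $f(p)=C\sin p/E(p)$ of Proposition \ref{takahashi} satisfies $f\sim 1/p$ near $p=0$, so $\int_{\mathbb T}|f|\,dp=\infty$ while $\int_{\mathbb T}|f|^\epsilon\,dp<\infty$ for every $0<\epsilon<1$; thus $f\in L^\epsilon_{\mathrm o}(\mathbb T)\setminus L^1_{\mathrm o}(\mathbb T)$, a super-threshold resonance. Because every solution is a multiple of $\sin p/E(p)\notin L^1$, there is no nonzero $L^1$ solution, which rules out both a threshold resonance and a threshold eigenvalue and completes (1).

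The only nonroutine step is the borderline dimension $n=2$ (and, symmetrically, the failure of $L^1$-membership at $n=1$): there the sufficient conditions of Lemma \ref{Incl L2} are not strict, so the negative statements $f_j\notin L^2$ for $n=2$ and $f\notin L^1$ for $n=1$ cannot be quoted and must instead be extracted from the explicit logarithmic divergence of the relevant integral near $p=0$.
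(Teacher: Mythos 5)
Your argument is correct and follows the same route as the paper: part (1) via Proposition \ref{takahashi}, and parts (2)--(3) by reducing, through Lemma \ref{reson lem odd}, to deciding whether $\sin p_j/E(p)$ lies in $L^2$ or in $L^1\setminus L^2$. The paper simply asserts these memberships, whereas you justify them via Lemma \ref{Incl L2} and the logarithmic divergence of $\int \sin^2 p_j/E(p)^2\,dp$ at $n=2$; this is the same proof with the routine integrability checks written out.
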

\begin{proof}
(1) follows from Proposition  \ref{takahashi}. Let $n\geq 2$. Then
the solution of $H_{\lambda}^{\mathrm{o}} f=0$ is given by
\eqref{sake}. Since
\begin{align*}
&\frac{\sin p_j}{E(p)}\in L^1(\mathbb{T}^n)\setminus L^2(\mathbb{T}^n),\quad  n=2,\\
&\frac{\sin p_j}{E(p)}\in L^2(\mathbb{T}^n),\quad  n\geq3,
\end{align*}
we have $f\in L^1(\mathbb{T}^n)\setminus L^2(\mathbb{T}^n)$ for
$n=2$, and $f\in L^2(\mathbb{T}^n)$ for $n\geq3$. Then (2) and (3)
follow.
%The multiplicity of $0$ is $n$.
\end{proof}

\section{Main theorems}

\subsection{Case of $n\geq2$}
In order to describe the main results we have to separate
$(\lambda,\mu)$-plane into several regions.
\begin{lemma}
Let $n\geq 2$. Then $\lambda_\infty(z)\leq \lambda_s(z)\leq
\lambda_c(z)$ for $z\in (-\infty,0]$.
\end{lemma}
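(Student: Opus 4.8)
The plan is to read the three quantities as functions of $z$, namely $\lambda_\infty(z)=a(z)/b(z)$, $\lambda_s(z)=1/s(z)$ and $\lambda_c(z)=1/(c(z)-d(z))$ (so that the constants introduced earlier are $\lambda_s=\lambda_s(0)$ and $\lambda_c=\lambda_c(0)$), and then to reduce the two claimed inequalities to scalar inequalities already collected in Lemma \ref{as<b}. By Lemma \ref{LemMon} the functions $b(z)$, $s(z)$ and $c(z)-d(z)$ are strictly positive on $(-\infty,0]$ for $n\geq2$, and $a(z),b(z)$ are finite for all $z<0$ (and also at $z=0$ when $n\geq3$). On that range reciprocals preserve order, so multiplying through by the relevant positive factors shows that $\lambda_\infty(z)\leq\lambda_s(z)$ is equivalent to $a(z)s(z)\leq b(z)$, while $\lambda_s(z)\leq\lambda_c(z)$ is equivalent to $c(z)-d(z)\leq s(z)$.

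The second inequality is then immediate on the whole interval: since $s(0)<\infty$ and $c(z)-d(z)$ has a finite positive limit as $z\to0-$, the relation $c(z)-d(z)\leq s(z)$ is exactly \eqref{s=1} of Lemma \ref{as<b}, valid for $n\geq2$ and all $z\leq0$. Hence $\lambda_s(z)\leq\lambda_c(z)$ for every $z\in(-\infty,0]$ with no further work. For the first inequality, the reduction $a(z)s(z)\leq b(z)$ is supplied directly by the $n=2$ and $n\geq3$ lines of Lemma \ref{as<b} whenever $z<0$, and for $n\geq3$ those lines are stated up to and including $z=0$; so the full chain $\lambda_\infty(z)\leq\lambda_s(z)\leq\lambda_c(z)$ holds on all of $(-\infty,0]$ as soon as $n\geq3$, and on $(-\infty,0)$ for $n=2$.

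The only genuinely delicate point is the endpoint $z=0$ in dimension $n=2$, where $a(0)$ and $b(0)$ both diverge (Lemma \ref{lak}), so that $\lambda_\infty(0)$ must be understood as the limit $\lim_{z\to0-}a(z)/b(z)=1$ furnished by \eqref{lim 2}, and where Lemma \ref{as<b} only guarantees $a(z)s(z)<b(z)$ for $z<0$. Here I would pass to the limit: for $z<0$ we have $s(z)<b(z)/a(z)$, and since $s(z)\uparrow s(0)$ as $z\to0-$ by monotone convergence (Lemma \ref{LemMon}) while $b(z)/a(z)\to1$ by \eqref{lim 2}, letting $z\to0-$ yields $s(0)\leq1$, i.e. $\lambda_\infty(0)=1\leq1/s(0)=\lambda_s(0)$. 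Combined with the $z=0$ case of the second inequality this closes the argument at $(n,z)=(2,0)$. I expect this limiting step to be the main obstacle, the remainder being a routine order-preserving rearrangement of Lemma \ref{as<b}.
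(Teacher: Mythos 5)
Your proposal is correct and follows essentially the same route as the paper: reduce both inequalities to the relations $a(z)s(z)\leq b(z)$ and $c(z)-d(z)<s(z)$ of Lemma \ref{as<b} for $z<0$, then pass to the limit at $z=0$. The paper compresses the endpoint case into the phrase ``by a limiting argument,'' whereas you spell out the only delicate instance ($n=2$, $z=0$, where $a$ and $b$ diverge) explicitly and correctly.
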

\begin{proof}
By Lemma \ref{as<b} it follows that
$$\lambda_c(z)=\frac{1}{c(z)-d(z)}>\lambda_s(z)=\frac{1}{s(z)}>\lambda_\infty(z)=\frac{a(z)}{b(z)}$$ for $z<0$.
By a limiting argument the lemma follows.
\end{proof}
We introduce $4$-half planes:
\begin{align*}
{\mathfrak C}_-&=\{(\lambda,\mu)\in{\mathbb R}^2; \lambda< \lambda_c
\}, \quad {\mathfrak C}_+=\{(\lambda,\mu)\in{\mathbb R}^2; \lambda>
\lambda_c \}\\
{\mathfrak S}_-&=\{(\lambda,\mu)\in{\mathbb R}^2; \lambda< \lambda_s
\}, \quad {\mathfrak S}_+=\{(\lambda,\mu)\in{\mathbb R}^2; \lambda>
\lambda_s \},
\end{align*}
and two boundaries: ${\mathfrak C}_0 =\{(\lambda,\mu)\in{\mathbb
R}^2; \lambda= \lambda_c \}$ and ${\mathfrak
S}_0=\{(\lambda,\mu)\in{\mathbb R}^2; \lambda= \lambda_s \}$. Note
that ${\mathfrak S}_-\subset {\mathfrak C}_-$ and ${\mathfrak
C}_+\subset {\mathfrak S}_+$, and we define open sets surrounded by
hyperbola ${\mathfrak{H}_0}$ and boundary $\Gamma_c$ and $\Gamma_s$
by
:% new notations below:
\begin{align*}%\label{Sets}
    &D_0=G_0,
    \quad
      D_1=G_1 \cap {\mathfrak S}_-,\quad       D_2=G_2 \cap {\mathfrak S}_-,\quad D_{n+1}=G_1 \cap ({\mathfrak S}_+\cap  {\mathfrak C}_-),\\
 &     D_{n+2}=G_2 \cap ({\mathfrak S}_+\cap  {\mathfrak C}_-),\quad D_{2n}=G_1 \cap   {\mathfrak C}_+,\quad
     D_{2n+1}=G_2 \cap   {\mathfrak C}_+.
\end{align*}
The boundaries of these sets define disjoint 8 curves:% on $\lambda-\mu$ plane:
%  create non intersecting eight curves
\begin{align*}
&B_0=\Gamma_l,\quad B_1=\Gamma_r\cap {\mathfrak S}_-,\quad
B_{n+1}=\Gamma_r\cap ({\mathfrak S}_+\cap {\mathfrak C}_-), \quad
B_{2n}=\Gamma_r\cap  {\mathfrak C}_+,
\quad\\
&S_{1}={\mathfrak S}_0 \cap G_1, \quad S_{2}={\mathfrak S}_0 \cap
G_2,\quad C_{n+1}={\mathfrak C}_0  \cap G_1, \quad
C_{n+2}={\mathfrak C}_0  \cap G_2,
\end{align*}
and two one point sets given by
$$
A=\Gamma_r\cap {\mathfrak S}_0,\quad B=\Gamma_r\cap {\mathfrak C}_0
.$$
% Rasm

We are now in the position to state the main theorem for $n\geq2$.
\begin{theorem}\label{Main 1} Let $n\geq 2$.
 \begin{itemize}
  \item
  [(a)] Assume  that $(\lambda,\mu)\in D_k,$ $k\in \{0,1,2,n+1,n+2,2n,2n+1\}$,
   then $H_{\lambda\mu}$ has
 $k$ eigenvalues in $(-\infty,0)$.
%below the essential spectrum.
In addition $H_{\lambda\mu}$  has neither
 a    threshold eigenvalue nor a   threshold resonance
 (see Table~\ref{tab-1}).
\begin{table}
%\tbl{Spectrum of $H_{\lambda\mu}$ for $(\lambda,\mu)$ on $D_k$ for
%$n\geq2$.}
{
\begin{tabular}
  {|p{2.5cm}|p{1cm}|p{1cm}|p{1cm}|p{1cm}|p{1cm}|p{1cm}|p{1cm}|}
    \hline
  &  $D_0$
  &  $D_1$
  &  $D_2$
  &  $D_{n+1}$
  &  $D_{n+2}$
  &  $D_{2n}$
  &  $D_{2n+1}$
   \\
  \hline
 E.v.in $ (-\infty,0)$
   &  $0$
  &  $1$
  &  $2$
  &  ${n+1}$
  &  ${n+2}$
  &  ${2n}$
  &  ${2n+1}$
\\
\hline
\end{tabular}
} \caption{Spectrum of $H_{\lambda\mu}$ for $(\lambda,\mu)$ on $D_k$
for $n\geq2$.} \label{tab-1}
\end{table}

\item[(b)] $0$ is not a   super-threshold resonance of  $H_{\lambda\mu}$ for any $(\lambda,\mu)\in\mathbb{R}^2$.
\item[(c)] Assume that $(\lambda,\mu)$ in $B_k$, $S_k$, $C_k$ and $A,B$
 the next results are true in Table \ref{tab-2}:
\begin{table}
%\tbl{Spectrum
 %of $H_{\lambda\mu}$ for $(\lambda,\mu)$ on the edges of $D_k$ for $n\geq2$.}
{
\begin{tabular}{|p{1.4cm}|p{1.9cm}|p{1.7cm}|p{1.9cm}|p{2cm}|p{2.2cm}|}
    \hline
  &  Curve $B_k$%, $k=0, 1, n+1, 2n$
 &  Curve $S_k$%, $k=1,2$
 &
Curve $C_k$%,k=n+1,n+2$
 &
Point $A$ & Point $B$
\\
  \hline
 E.v.
  $ (-\infty,0)$%%
 &
$k$ &%%
 $k$ & %%
$k $  & %%
$1$ & %%
$n+1$
\\
  \hline
Th.res.0%, its multiplicity %%
 &
\begin{tabular}{ll}
  $n=2$ & $-$\\
 \hline
 $n=3,4$ & $1$\\
 \hline
 $n\geq 5$ & $-$\\
\end{tabular}
&%%
\begin{tabular}{ll}
  $n=2$ & $2$\\
 \hline
 $n\geq 3$ & $-$\\
 \end{tabular}
  & %%
\begin{tabular}{ll}
  $n\geq 2$ & $-$
\end{tabular}
 &
\begin{tabular}{ll}
  $n=2$ & $2$\\
 \hline
 $n=3,4$ & $1$\\
 \hline
 $n\geq 5$ & $-$
\end{tabular}
&
\begin{tabular}{ll}
  $n=2$ & $-$\\
 \hline
 $n=3,4$ & $1$\\
 \hline
 $n\geq 5$ & $-$\\
\end{tabular}
 \\
%%%
  \hline
Th.e.v.0
% its multiplicity %%
%%
 &
\begin{tabular}{ll}
  $n=2$ & $-$\\
 \hline
 $n=3,4$ & $-$\\
 \hline
 $n\geq 5$ & $1$\\
\end{tabular}
&%%
\begin{tabular}{ll}
  $n=2$ & $-$\\
 \hline
 $n\geq 3$ & $n$\\
 \end{tabular}
 & %%
\begin{tabular}{ll}
  $n\geq 2$ & $n-1$\\
 \end{tabular}
 &
\begin{tabular}{ll}
  $n=2$ & $-$\\
 \hline
 $n=3,4$ & $n$\\
 \hline
 $n\geq 5$ & ${\!\!n+1}$\\
\end{tabular}
&
\begin{tabular}{ll}
  $n=2$ & $1$\\
 \hline
 $n=3,4$ & $n-1$\\
 \hline
 $n\geq 5$ & $n$\\
\end{tabular}
 \\
 \hline
\end{tabular}
} \caption{Spectrum
 of $H_{\lambda\mu}$ for $(\lambda,\mu)$ on the edges of $D_k$ for $n\geq2$.}
\label{tab-2}
\end{table}
\end{itemize}
\end{theorem}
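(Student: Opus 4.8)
The plan is to exploit the orthogonal decomposition $H_{\lambda\mu}=H_{\lambda\mu}^{\mathrm{e}}\oplus H_{\lambda}^{\mathrm{o}}$, so that the spectrum of $H_{\lambda\mu}$ in $(-\infty,0]$ is, counting multiplicity, the disjoint union of the spectra of the even and odd parts already analyzed. Concretely, $z<0$ is an eigenvalue of $H_{\lambda\mu}$ exactly when $\bar\delta_r(\lambda,\mu;z)\,\bar\delta_c(\lambda;z)\,\bar\delta_s(\lambda;z)=0$, and $z=0$ is a threshold eigenvalue or resonance exactly when this product vanishes at $z=0$; in either case the multiplicity is the sum of the multiplicities contributed by the three factors. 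Thus the whole statement reduces to reading off, region by region, the contributions recorded in Lemmas \ref{lemma Delta 0}, \ref{thres and res}, \ref{Lemm delta sin } and \ref{thres and res o}, and adding them.

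First I would fix the geometry. Using the inequality $\lambda_\infty(0)\leq\lambda_s\leq\lambda_c$ established just above, I record the containments $\mathfrak{S}_-\subset\mathfrak{C}_-$ and $\mathfrak{C}_+\subset\mathfrak{S}_+$, and observe that each set $D_k$, each curve $B_k,S_k,C_k$ and each point $A,B$ is by its very definition the intersection of exactly one even-part region ($G_0,\Gamma_l,G_1,\Gamma_r$ or $G_2$) with a prescribed position relative to the two vertical lines $\mathfrak{S}_0$ and $\mathfrak{C}_0$. With these containments the relative position of every region with respect to both $\lambda_s$ and $\lambda_c$ is unambiguous, so no case is left undetermined.

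For part (a) I would then add the contributions for each $D_k$: the factor $\delta_r$ contributes $0,1,2$ negative eigenvalues on $G_0,G_1,G_2$ respectively (Lemma \ref{F1}); the factor $\delta_c$ contributes $0$ on $\mathfrak{C}_-$ and an $(n-1)$-fold eigenvalue on $\mathfrak{C}_+$; and $\delta_s$ contributes $0$ on $\mathfrak{S}_-$ and an $n$-fold eigenvalue on $\mathfrak{S}_+$ (Lemma \ref{Lemm delta sin }). Summation yields exactly $k$ in each case, e.g.\ $D_{2n+1}=G_2\cap\mathfrak{C}_+$ gives $2+(n-1)+n=2n+1$, reproducing Table~\ref{tab-1}. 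Since every $D_k$ is open and avoids $\overline{\mathfrak{H}}_0$, $\mathfrak{C}_0$ and $\mathfrak{S}_0$, all three factors are nonzero at $z=0$, so there is neither threshold eigenvalue nor threshold resonance. Part (b) follows because, for $n\geq2$, every solution of $H_{\lambda\mu}^{\mathrm{e}}f=0$ or $H_{\lambda}^{\mathrm{o}}f=0$ at the threshold lies in $L^1$: for the even part this is Lemma \ref{n=2 res lemm}(2) together with the $L^1$ Birman--Schwinger principle for $n\geq3$, and for the odd part it is Proposition \ref{takahashi} (super-threshold resonance occurs only for $n=1$) with Lemma \ref{thres and res o}.

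Finally, for part (c) I would run the same additive bookkeeping on the boundary sets, now carrying the threshold classification at $z=0$ from Lemmas \ref{thres and res} and \ref{thres and res o}, which is the only place where the dimension split ($n=2$, $n=3,4$, $n\geq5$) enters. For example, on $A=\Gamma_r\cap\mathfrak{S}_0\subset\mathfrak{C}_-$ the even part supplies one negative eigenvalue and a threshold object that is a resonance for $n=3,4$ but an eigenvalue for $n\geq5$, while the odd part supplies an $n$-fold threshold object that is a resonance for $n=2$ and an eigenvalue for $n\geq3$; summing these classifications reproduces the $A$-column of Table~\ref{tab-2}, and the remaining columns are handled identically. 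The main obstacle is not analytical but combinatorial: one must keep the three multiplicity counts from $\delta_r,\delta_c,\delta_s$ and their separate $z=0$ classifications uncoupled across all $n$-ranges, so that no threshold contribution is miscounted or mislabeled when the even and odd spectra are superimposed.
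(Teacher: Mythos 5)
Your proposal is correct and follows essentially the same route as the paper: decompose $H_{\lambda\mu}=H_{\lambda\mu}^{\mathrm{e}}\oplus H_{\lambda}^{\mathrm{o}}$, reduce each part via the Birman--Schwinger principle to the zeros of $\bar\delta_r$, $\bar\delta_c$ and $\bar\delta_s$, and then add the region-by-region contributions recorded in Lemmas \ref{lemma Delta 0}, \ref{Lemm delta sin }, \ref{thres and res} and \ref{thres and res o}. The paper's proof is just a citation of these lemmas; your write-up supplies the bookkeeping (and, for part (b), the more apposite reference to Lemma \ref{n=2 res lemm}) but introduces no new idea or different method.
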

\begin{proof}
%Lemma \ref{as<b} and Figure \ref{1} allow us to draw Figure \ref{2}.
(a)  follows from %results take place of from
Lemmas \ref{lemma Delta 0} and \ref{Lemm delta sin }. (b) follows
from Lemmas \ref{n=1 res lemm}, \ref{reson lem odd} and \ref{thres
and res o}. (c)  follows from Lemmas \ref{thres and res} and~
\ref{thres and res o}.
\end{proof}
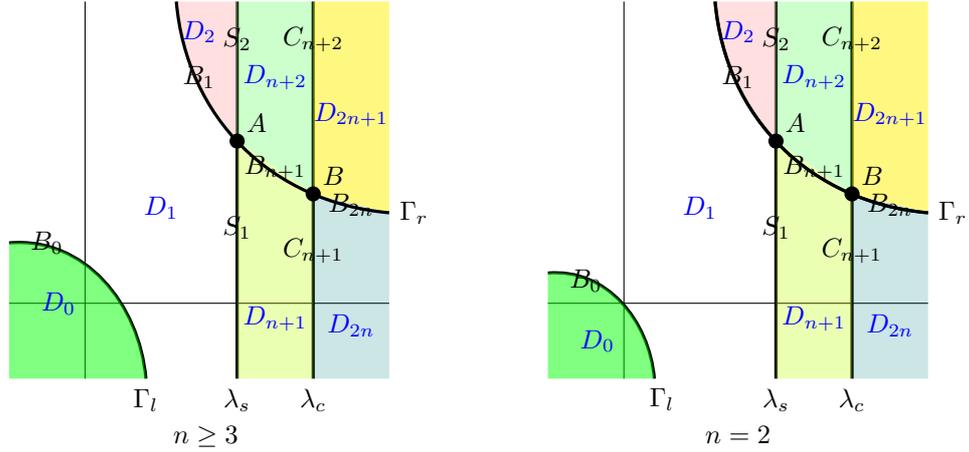
\begin{figure}[h]
\begin{center}
\begin{tikzpicture}[scale=1]%%%%%%%%%%%%%%\UTF{2460}""%
\useasboundingbox(-1,-2) rectangle(4,4);
%\draw[help lines](-1,-2)grid(5,5);
\node[below]at(1.5,-1.5){$ $};
%\UTF{00E5}\UTF{00BA}§\UTF{00E6}\UTF{0161}\UTF{0099}\UTF{00E8}\UTF{00BB}\UTF{017E}%
\draw[thin](-1,0)--(4,0); \draw[thin](0,-1)--(0,4);

%\UTF{00E8}£\UTF{009C}\UTF{00E5}\UTF{008A}\UTF{00A9}\UTF{00E7}\UTF{00B7}\UTF{009A}%
%\draw[densely dashed](1,-1)--(1,4);
%\draw[densely dashed](-1,1)--(4,1);
\draw[very thick](2,-1)node[below]{$\lambda_s$}--(2,4); \draw[very
thick](3,-1)node[below]{$\lambda_c$}--(3,4);

%\UTF{00E6}\UTF{009B}\UTF{00B2}\UTF{00E7}\UTF{00B7}\UTF{009A}%
\draw[very
thick](-1,0.8)to[out=50,in=140,relative](0.8,-1)node[below]{$\Gamma_l$};
%$\Gamma_r$};%\UTF{00E4}\UTF{017E}\UTF{008B}%
\fill[green,opacity=.5](-1,0.8)to[out=50,in=140,relative](0.8,-1)--(-1,-1)--(-1,0.8);%\UTF{00E4}\UTF{017E}\UTF{008B}%
\node[blue,left]at(0,0){$D_0$}; \node at(-0.5,0.8){$B_0$};
%\draw[very thick](1.2,4)to[out=320,in=220,relative](4,1.2);%\UTF{00E4}\UTF{017E}\UTF{008A}%

%\UTF{00E8}\UTF{0089}\UTF{00B2}%
\fill[pink,opacity=.5](1.2,4)--(1.4,3)--(2,2.15)--(2,4);
%\fill[pink,opacity=.5](1.2,4)to[out=320,in=185,relative](2,2.15)--(2,4);
\fill[green,opacity=.2](2,2.15)--(3,1.45)--(3,4)--(2,4);
\fill[yellow,opacity=.5](3,1.45)--(4,1.2)--(4,4)--(3,4);
\fill[lime,opacity=.3](2,-1)--(3,-1)--(3,1.45)--(2,2.15);
\fill[teal,opacity=.2](3,-1)--(4,-1)--(4,1.2)--(3,1.45);
\fill(2,2.15)circle[radius=0.1];%A\UTF{00E3}\UTF{0081}\UTF{00AE}\UTF{00E9}\UTF{00BB}\UTF{0092}\UTF{00E4}\UTF{017E}\UTF{017E}%
\fill(3,1.45)circle[radius=0.1];%B\UTF{00E3}\UTF{0081}\UTF{00AE}\UTF{00E9}\UTF{00BB}\UTF{0092}\UTF{00E4}\UTF{017E}\UTF{017E}%
\draw[very thick](1.2,4)to[out=320,in=220,relative](4,1.2)node[right]{$\Gamma_r$};%\UTF{00E4}\UTF{017E}\UTF{008A}\UTF{00E3}\UTF{0081}\UTF{00AE}\UTF{00E6}\UTF{009B}\UTF{00B2}\UTF{00E7}\UTF{00B7}\UTF{009A}%

\node[above right]at(2,2.15){$A$}; \node[above
right]at(3,1.45){$B$}; \node[blue,above]at(1,1){$D_1$};
\node[blue]at(1.5,3.6){$D_2$}; \node[blue]at(2.5,3){$D_{n+2}$};
\node[blue]at(3.5,2.5){$D_{2n+1}$};
\node[blue]at(2.5,-0.2){$D_{n+1}$};
\node[blue]at(3.5,-0.3){$D_{2n}$}; \node at(2,1){$S_1$}; \node
at(2,3.5){$S_2$}; \node at(3,0.7){$C_{n+1}$}; \node
at(3,3.5){$C_{n+2}$}; \node at(1.5,3){$B_1$}; \node
at(2.5,1.8){$B_{n+1}$}; \node at(3.5,1.3){$B_{2n}$};
\end{tikzpicture}\hspace{2cm}
%\end{center}\vspace{-0.5cm}\caption{Hyperbola for $n\geq 3$}\end{figure}\begin{figure}[h]\begin{center}
\begin{tikzpicture}[scale=1]%%%%%%%%%%%%%%\UTF{2460}""%
\useasboundingbox(-1,-2) rectangle(4,4);
%\draw[help lines](-1,-2)grid(5,5);
\node[below]at(0.5,-1){$\Gamma_l$};
%\UTF{00E5}\UTF{00BA}§\UTF{00E6}\UTF{0161}\UTF{0099}\UTF{00E8}\UTF{00BB}\UTF{017E}%
\draw[thin](-1,0)--(4,0); \draw[thin](0,-1)--(0,4);

%\UTF{00E8}£\UTF{009C}\UTF{00E5}\UTF{008A}\UTF{00A9}\UTF{00E7}\UTF{00B7}\UTF{009A}%
%\draw[densely dashed](1,-1)--(1,4);
%\draw[densely dashed](-1,1)--(4,1);
\draw[very thick](2,-1)node[below]{$\lambda_s$}--(2,4); \draw[very
thick](3,-1)node[below]{$\lambda_c$}--(3,4);

%\UTF{00E6}\UTF{009B}\UTF{00B2}\UTF{00E7}\UTF{00B7}\UTF{009A}%
\draw[very thick](-1,0.4)to[out=50,in=140,relative](0.4,-1);%\UTF{00E4}\UTF{017E}\UTF{008B}%
\draw[very thick](1.2,4)to[out=320,in=220,relative](4,1.2);%\UTF{00E4}\UTF{017E}\UTF{008A}%

%\draw[very thick](-1,0.8)to[out=50,in=140,relative](0.8,-1)node[below]{$\Gamma_r$};%\UTF{00E4}\UTF{017E}\UTF{008B}%
\fill[green,opacity=.5](-1,0.4)to[out=50,in=140,relative](0.4,-1)--(-1,-1)--(-1,0.4);%\UTF{00E4}\UTF{017E}\UTF{008B}%
\node[blue,left]at(0,-0.5){$D_0$}; \node at(-0.5,0.3){$B_0$};
%\draw[very thick](1.2,4)to[out=320,in=220,relative](4,1.2);%\UTF{00E4}\UTF{017E}\UTF{008A}%

%\UTF{00E8}\UTF{0089}\UTF{00B2}%
\fill[pink,opacity=.5](1.2,4)--(1.4,3)--(2,2.15)--(2,4);
%\fill[pink,opacity=.5](1.2,4)to[out=320,in=185,relative](2,2.15)--(2,4);
\fill[green,opacity=.2](2,2.15)--(3,1.45)--(3,4)--(2,4);
\fill[yellow,opacity=.5](3,1.45)--(4,1.2)--(4,4)--(3,4);
\fill[lime,opacity=.3](2,-1)--(3,-1)--(3,1.45)--(2,2.15);
\fill[teal,opacity=.2](3,-1)--(4,-1)--(4,1.2)--(3,1.45);
\fill(2,2.15)circle[radius=0.1];%A\UTF{00E3}\UTF{0081}\UTF{00AE}\UTF{00E9}\UTF{00BB}\UTF{0092}\UTF{00E4}\UTF{017E}\UTF{017E}%
\fill(3,1.45)circle[radius=0.1];%B\UTF{00E3}\UTF{0081}\UTF{00AE}\UTF{00E9}\UTF{00BB}\UTF{0092}\UTF{00E4}\UTF{017E}\UTF{017E}%
\draw[very thick](1.2,4)to[out=320,in=220,relative](4,1.2)node[right]{$\Gamma_r$};%\UTF{00E4}\UTF{017E}\UTF{008A}\UTF{00E3}\UTF{0081}\UTF{00AE}\UTF{00E6}\UTF{009B}\UTF{00B2}\UTF{00E7}\UTF{00B7}\UTF{009A}%

\node[above right]at(2,2.15){$A$}; \node[above
right]at(3,1.45){$B$}; \node[blue,above]at(1,1){$D_1$};
\node[blue]at(1.5,3.6){$D_2$}; \node[blue]at(2.5,3){$D_{n+2}$};
\node[blue]at(3.5,2.5){$D_{2n+1}$};
\node[blue]at(2.5,-0.2){$D_{n+1}$};
\node[blue]at(3.5,-0.3){$D_{2n}$}; \node at(2,1){$S_1$}; \node
at(2,3.5){$S_2$}; \node at(3,0.7){$C_{n+1}$}; \node
at(3,3.5){$C_{n+2}$}; \node at(1.5,3){$B_1$}; \node
at(2.5,1.8){$B_{n+1}$}; \node at(3.5,1.3){$B_{2n}$};
\node[below]at(1.5,-1.5){$n=2$};
\node[below]at(-5.5,-1.5){$n\geq3$};
\end{tikzpicture}
\end{center}
\vspace{-0.5cm}\caption{Hyperbola for $n\geq 2$} \label{f8}
\end{figure}

%\begin{figure}[h]\begin{center}\includegraphics[width=10cm]{N_eq_2.eps}\caption{Regions for $n= 2$} \label{2}\end{center}\end{figure}\begin{figure}[h]\begin{center}\includegraphics[width=10cm]{N_big_3.eps}\caption{Regions for $n\geq3$} \label{2.1}\end{center}\end{figure}

We draw the results for $n=2$ on $(\lambda,\mu)$-plane in the
right-hand side of Figure \ref{f8} and for $n\geq3$ in the left-hand
side of Figure \ref{f8}.

\subsection{Case of $n=1$}
Let $n=1$. In this case,  the asymptote of $\mathfrak H_0$ is
$(\lambda_\infty(0),\mu_\infty(0))=(1,1)$, and $\lambda_c$ is not
defined. We also see that $\lambda_s=1=\lambda_\infty(0)$. Then we
have 4 sets:
\begin{align*}%\label{Sets}
      D_0=G_0,\quad
      D_1=G_1 \cap {\mathfrak S}_-,\quad
      D_2=G_1 \cap {\mathfrak S}_+,\quad
      D_3=G_2.
\end{align*}
The boundaries of these sets define disjoint 3 curves:% on $\lambda-\mu$ plane:
%  create non intersecting eight curves
\begin{align*}
&B_0=\Gamma_l,\quad B_2=\Gamma_r,\quad S_{1} ={\mathfrak S}_0.
\end{align*}
Finally we define point $C$ by $C=\Gamma_r\cap {\mathfrak S}_0$. Now
we formulate next result for $n=1$.
\begin{theorem}\label{Main 2}
  Let $n=1$.
  \begin{itemize}
\item[(a)]   Assume  $(\lambda,\mu)\in D_k,$ $k\in \{0,1,2,3\}$.
  Then $H_{\lambda\mu}$ has
 $k$ eigenvalues in $(\infty,0)$. In addition
 $0$ is neither a   threshold resonance nor  a   threshold eigenvalue
 (see Table~\ref{tab-3}).
\begin{table}[h]
%\tbl{Spectrum of $H_{\lambda\mu}$ for $(\lambda,\mu)$ on $D_k$ for
%$n=1$.}
{ \begin{tabular}
  {|p{2.5cm}|p{1cm}|p{1cm}|p{1cm}|p{1cm}|}
    \hline
  &  $D_0$
  &  $D_1$
  &  $D_2$
  &  $D_3$
   \\
  \hline
 E.v.in $ (-\infty,0)$
  &  $0$
  &  $1$
  &  $2$
  &  $3$
\\
\hline
\end{tabular}}
\caption{Spectrum of $H_{\lambda\mu}$ for $(\lambda,\mu)$ on $D_k$
for $n=1$.}
\label{tab-3}
\end{table}
\item[(b)]
Assume that $(\lambda,\mu)\in S_1$. Then $H_{\lambda\mu}$ has a
super-threshold resonance.
\item[(c)]
Assume that $(\lambda,\mu)\in {B}_k\cup {S}_1$. Then  the next
result in Table \ref{tab-4} is true.
\begin{table}[h]
%\tbl{Spectrum of $H_{\lambda\mu}$ for $(\lambda,\mu)$ on the edges
%of $D_k$ for $n=1$.}
  {\begin{tabular}
  {|p{2.5cm}|p{2cm}|p{2cm}|}
    \hline
  &  $B_k$
 &  Curve $ S_k$\\
  \hline
 E.v.in $ (-\infty,0)$%%
 &
$k$ &%%
 $k$ \\
\hline
 Th.res.0%, its multiplicity %%
 &
$-$&
$-$\\
\hline Th.e.v.0
% its multiplicity %%
%%
 &
$-$&
$-$\\
\hline
\end{tabular}}
\caption{Spectrum of $H_{\lambda\mu}$ for $(\lambda,\mu)$ on the
edges of $D_k$ for $n=1$.}
\label{tab-4}
\end{table}
 \end{itemize}
In particular $H_{\lambda\mu}$ has neither a   threshold resonance
nor a   threshold eigenvalue.
\end{theorem}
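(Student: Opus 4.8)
The plan is to exploit the orthogonal decomposition $H_{\lambda\mu}=H_{\lambda\mu}^{\mathrm{e}}\oplus H_{\lambda}^{\mathrm{o}}$ from Section~2, so that the eigenvalues of $H_{\lambda\mu}$ in $(-\infty,0)$ are the union, counted with multiplicity, of those of the even and odd parts (their eigenspaces are orthogonal), and likewise $0$ is a threshold eigenvalue, threshold resonance, or super-threshold resonance of $H_{\lambda\mu}$ exactly when it is one for $H_{\lambda\mu}^{\mathrm{e}}$ or for $H_{\lambda}^{\mathrm{o}}$. Thus the theorem reduces to reading off the $n=1$ data from the two component analyses: Lemma~\ref{toto} and Lemma~\ref{thres and res}(1) for the even part, and Lemma~\ref{Lemm delta sin } together with Proposition~\ref{takahashi} and Lemma~\ref{thres and res o}(1) for the odd part.

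Before adding the two contributions I would pin down the geometry of the regions. The key point is that for $n=1$ Lemma~\ref{as<b} gives $a(z)s(z)=b(z)$, hence $\lambda_s(z)=1/s(z)=a(z)/b(z)=\lambda_\infty(z)$; passing to the limit $z\to 0-$ and using \eqref{lim 2} yields $\lambda_s=\lambda_\infty(0)=1$. Consequently the line ${\mathfrak S}_0=\{\lambda=1\}$ coincides with the vertical asymptote of $\mathfrak H_0$, and since $\overline{\mathcal{H}}_0(1,\mu)=(1-1)(\mu-1)-1=-1<0$ the entire line lies in $G_1$. This forces the inclusions $D_0=G_0\subset{\mathfrak S}_-$ and $D_3=G_2\subset{\mathfrak S}_+$, while ${\mathfrak S}_0$ splits $G_1$ into $D_1=G_1\cap{\mathfrak S}_-$ and $D_2=G_1\cap{\mathfrak S}_+$; the branches $B_0=\Gamma_l$ and $B_2=\Gamma_r$ sit in ${\mathfrak S}_-$ and ${\mathfrak S}_+$ respectively, and $C=\Gamma_r\cap{\mathfrak S}_0=\emptyset$ because $\Gamma_r$ lies strictly to the right of the asymptote.

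With the placement fixed, part (a) is pure bookkeeping. On each $D_k$ the even part contributes $0,1,1,2$ eigenvalues over $G_0,\,G_1\cap{\mathfrak S}_-,\,G_1\cap{\mathfrak S}_+,\,G_2$ by Lemma~\ref{toto}, and the odd part contributes $0,0,1,1$ by Lemma~\ref{Lemm delta sin } since $D_0,D_1\subset{\mathfrak S}_-$ and $D_2,D_3\subset{\mathfrak S}_+$; the sums are $0,1,2,3=k$, matching Table~\ref{tab-3}. No $D_k$ meets ${\mathfrak S}_0$, so the odd part has no zero-energy solution there, and by Lemma~\ref{thres and res}(1) the even part never produces threshold data for $n=1$; hence $0$ is neither a threshold eigenvalue nor a threshold resonance. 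For (b), on $S_1={\mathfrak S}_0$ we have $\lambda=\lambda_s=1$, and Proposition~\ref{takahashi} supplies a super-threshold resonance of $H_{\lambda}^{\mathrm{o}}$, which is therefore one of $H_{\lambda\mu}$. For (c) I would repeat the count on the curves: $B_0=\Gamma_l$ gives $0+0=0$, $B_2=\Gamma_r$ gives $1+1=2$, and $S_1$ gives $1$ (one even eigenvalue from $S_1\subset G_1$, none odd since $\lambda=\lambda_s$), matching Table~\ref{tab-4}, with absence of threshold eigenvalue and threshold resonance following again from Lemma~\ref{thres and res}(1) and Lemma~\ref{thres and res o}(1). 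The final ``in particular'' assertion is then immediate, since for $n=1$ every threshold-type solution is excluded except the super-threshold resonance living on $S_1$.

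The only real content beyond bookkeeping is the geometric identification $\lambda_s=\lambda_\infty(0)=1$ and the resulting fact that ${\mathfrak S}_0$ is the asymptote sitting inside $G_1$; this is what makes the decomposition $D_0,\dots,D_3$ exhaustive and guarantees that the even- and odd-part contributions combine exactly as tabulated. I expect this placement step to be the main point to get right, while the summation of multiplicities is routine.
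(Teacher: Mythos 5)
Your proposal is correct and follows essentially the same route as the paper: the paper's proof is a one-line citation of Lemmas \ref{toto}, \ref{thres and res}, \ref{Lemm delta sin } and \ref{thres and res o}, i.e.\ exactly the even/odd bookkeeping you carry out, and your added verification that $\lambda_s=\lambda_\infty(0)=1$ (via $a(z)s(z)=b(z)$ for $n=1$) so that ${\mathfrak S}_0\subset G_1$, $G_0\subset{\mathfrak S}_-$ and $G_2\subset{\mathfrak S}_+$ is precisely the placement the paper asserts without proof at the start of Section~4.2. No gaps.
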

\begin{proof}
The theorem follows from %results take place of from
Lemmas \ref{toto}, \ref{thres and res},  \ref{Lemm delta sin } and
\ref{thres and res o}.
\end{proof}
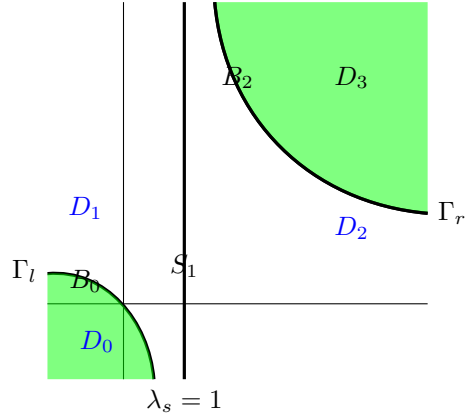
\begin{figure}[h]
\begin{center}
\begin{tikzpicture}[scale=1]%%%%%%%%%%%%%%\UTF{2460}""%
\useasboundingbox(-1,-2) rectangle(4,4);
\node[below]at(-1.3,0.7){$\Gamma_l $}; \draw[thin](-1,0)--(4,0);
\draw[thin](0,-1)--(0,4); \draw[very
thick](0.8,-1)node[below]{$\lambda_s=1$}--(0.8,4);
\draw[very thick](-1,0.4)to[out=50,in=140,relative](0.4,-1);%\UTF{00E4}\UTF{017E}\UTF{008B}%
\draw[very thick](1.2,4)to[out=320,in=220,relative](4,1.2);%\UTF{00E4}\UTF{017E}\UTF{008A}%
\fill[green,opacity=.5](1.2,4)to[out=320,in=220,relative](4,1.2)--(4,1.2)--(4,4);%\UTF{00E4}\UTF{017E}\UTF{008A}%
\fill[green,opacity=.5](-1,0.4)to[out=50,in=140,relative](0.4,-1)--(-1,-1)--(-1,0.4);%\UTF{00E4}\UTF{017E}\UTF{008B}%
\node[blue,left]at(0,-0.5){$D_0$}; \node at(-0.5,0.3){$B_0$};
\draw[very thick](1.2,4)to[out=320,in=220,relative](4,1.2)node[right]{$\Gamma_r$};%\UTF{00E4}\UTF{017E}\UTF{008A}\UTF{00E3}\UTF{0081}\UTF{00AE}\UTF{00E6}\UTF{009B}\UTF{00B2}\UTF{00E7}\UTF{00B7}\UTF{009A}%
\node[blue,above]at(-0.5,1){$D_1$}; \node[blue]at(3,){$D_{2}$};
\node at(0.8,0.5){$S_1$}; \node at(3,3){$D_3$}; \node
at(1.5,3){$B_2$};
\end{tikzpicture}
\end{center}
\vspace{-1cm}\caption{Hyperbola for $n=1$} \label{f6}
\end{figure}

%\begin{figure}[ht]\begin{center} \includegraphics[width=10cm]{N_eq_1.eps}\caption{Regions for $n=1$} \label{3}\end{center}\end{figure}
We draw the results for $n=1$ on $(\lambda,\mu)$-plane in Figure
\ref{f6}. In particular $(\lambda,\mu)\in S_1\cup S_2$

\subsection{Eigenvalues and asymptote}
From results obtained in the previous section a stable point on
$\lambda$ can be found. In general the spectrum of $H_{\lambda\mu}$
is changed  according to varying $\mu$ with a fixed $\lambda$. This
can be also seen from Figures \ref{f4}, \ref{f8} and \ref{f6}.
Curves on these figures  consist of only hyperbolas and vertical
lines. Then an asymptote has no intersection of these lines. It can
be seen that
$$\{(1,\mu)\in\mathbb{R}^2;\mu\in\mathbb{R}\}\cup\{(\lambda,n)\in\mathbb{R}^2;\lambda\in\mathbb{R}\}$$
is the asymptote of hyperbola $\mathcal{H}_z(\lambda,\mu)$ for
$n=1,2$. On the other hand
$$\{(a(0)/b(0),\mu)\in\mathbb{R}^2;\mu\in\mathbb{R}\}\cup\{(\lambda,n)\in\mathbb{R}^2;\lambda\in\mathbb{R}\}$$
is the asymptote of hyperbola $\mathcal{H}_z(\lambda,\mu)$ for
$n\geq 3$. Then we can have the corollary below.
\begin{corollary}
Let $\lambda=1$ for $n=1,2$ and $\lambda=a(0)/b(0)$ for $n\geq 3$.
\begin{itemize}
\item[(1)]
Let $n=1$. Then $H_{\lambda\mu}$ has a super-threshold resonance $0$
and only one eigenvalue in $(-\infty,0)$ for any $\mu$.
\item[(2)]
Let $n\geq 2$. Then $H_{\lambda\mu}$ has only one eigenvalue in
$(-\infty,0)$ for any $\mu$.
\end{itemize}
\end{corollary}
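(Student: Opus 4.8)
The plan is to exploit the single fact that, in every case considered, the prescribed value of $\lambda$ is exactly $\lambda_\infty(0)$, the $\lambda$-coordinate of the vertical asymptote of the limiting hyperbola $\mathfrak{H}_0$; indeed $\lambda_\infty(0)=1$ for $n=1,2$ and $\lambda_\infty(0)=a(0)/b(0)$ for $n\geq3$ by Lemma \ref{lem asymp}, which matches the value $X$ introduced after \eqref{f1}. The first step is to locate the whole vertical line $\{\lambda=\lambda_\infty(0)\}$ with respect to the curves partitioning the $(\lambda,\mu)$-plane. Substituting $\lambda=\lambda_\infty(0)=X$ into the extended form \eqref{f1} yields
\begin{align*}
\overline{\mathcal{H}}_0(\lambda_\infty(0),\mu)=(\lambda_\infty(0)-X)(\mu-n)-n=-n<0
\end{align*}
for every $\mu\in\mathbb{R}$, so the entire line lies in the region $G_1$. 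Hence the $\delta_r$-factor can only contribute through $G_1$.

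The second step is to pin down $\lambda_\infty(0)$ relative to the vertical lines $\lambda=\lambda_s$ and $\lambda=\lambda_c$. I would invoke Lemma \ref{as<b}: the inequality $c(z)-d(z)<s(z)$ (valid for $n\geq2$, $z\leq0$) gives at $z=0$ the strict ordering $\lambda_s<\lambda_c$, while $a(z)s(z)<b(z)$ for $z<0$ together with the limit $\lambda_\infty(0)=\lim_{z\to0-}a(z)/b(z)$ from Lemma \ref{lem asymp} gives $\lambda_\infty(0)\leq\lambda_s$ by a limiting argument (exactly as in the ordering lemma preceding Theorem \ref{Main 1}). Thus $\lambda_\infty(0)\leq\lambda_s<\lambda_c$, so $(\lambda_\infty(0),\mu)\in\mathfrak{C}_-$ and $(\lambda_\infty(0),\mu)\in\mathfrak{S}_-\cup\mathfrak{S}_0$ for all $\mu$.

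Combining the two steps, for $n\geq2$ the point $(\lambda_\infty(0),\mu)$ lies in $G_1\cap\mathfrak{C}_-\cap(\mathfrak{S}_-\cup\mathfrak{S}_0)$, i.e. either in $D_1=G_1\cap\mathfrak{S}_-$ or on the boundary curve $S_1=\mathfrak{S}_0\cap G_1$. By Theorem \ref{Main 1}(a) the region $D_1$ carries exactly one eigenvalue in $(-\infty,0)$, and by Theorem \ref{Main 1}(c) the curve $S_1$ carries exactly one eigenvalue in $(-\infty,0)$ as well (membership in $\mathfrak{S}_0$ produces only a threshold contribution from the odd part, not an eigenvalue in the open interval). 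This proves (2). For $n=1$ the line $\lambda=1=\lambda_s=\lambda_\infty(0)$ is precisely $\mathfrak{S}_0=S_1$, so Theorem \ref{Main 2}(b),(c) apply directly: $H_{\lambda\mu}$ has exactly one eigenvalue in $(-\infty,0)$ and, by Proposition \ref{takahashi}, a super-threshold resonance at $0$, which proves (1).

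The main obstacle I anticipate is the borderline dimension $n=2$, where $a(0)=b(0)=+\infty$ and the inequality $\lambda_\infty(0)\leq\lambda_s$ can only be obtained as a limit and may degenerate to equality, placing $(\lambda_\infty(0),\mu)$ on $\mathfrak{S}_0$ rather than strictly inside $\mathfrak{S}_-$. The point to verify carefully is that this boundary case adds no eigenvalue in $(-\infty,0)$: by Lemma \ref{Lemm delta sin } the factor $\bar\delta_s(\lambda_s;\cdot)$ has no zero in the open half-line and only acquires the zero $z=0$, so the eigenvalue count in $(-\infty,0)$ is unaffected and stays equal to one, exactly as recorded for $S_1$ in Theorem \ref{Main 1}.
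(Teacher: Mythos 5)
Your proposal is correct and takes essentially the same route as the paper: both arguments amount to locating the vertical line $\lambda=\lambda_\infty(0)$ within the partition of the $(\lambda,\mu)$-plane (it is the asymptote, so $\overline{\mathcal{H}}_0=-n<0$ along it and it never meets $\Gamma_l\cup\Gamma_r$, while $\lambda_\infty(0)\leq\lambda_s<\lambda_c$ keeps it out of ${\mathfrak C}_0\cup{\mathfrak C}_+$ and ${\mathfrak S}_+$) and then reading the eigenvalue count off Theorems \ref{Main 1} and \ref{Main 2}. You supply explicitly the computations that the paper delegates to Figures \ref{f8} and \ref{f6}, and your cautious treatment of the possible coincidence $\lambda_\infty(0)=\lambda_s$ for $n=2$ is sound, since the curve $S_1$ carries the same count of one eigenvalue in $(-\infty,0)$.
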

\begin{proof}
For $n=1,2$, let $l_n=S_1$. From Figures \ref{f8} and \ref{f6} it
follows that $l_n\cap \Gamma_l=l_n\cap \Gamma_r=\emptyset$. Then the
corollary follows. For $n\geq3$, let
$l_n=\{(a(0)/b(0),\mu)\in\mathbb{R}^2;\mu\in\mathbb{R}\}$. We can
also see that $l_n\cap \Gamma_l= l_n\cap S_1 = l_n\cap S_2 =l_n\cap
C_{n+1}= l_n\cap C_{n+2}= \emptyset$. Then the corollary follows.
\end{proof}

\appendix

\section{Proof of Lemma \ref{as<b}}\label{ineq lem}
\begin{proof}  We can see that %One can conclude
   \begin{align}\label{s=}
        s(z)&=1+z(a(z)+b(z)),\quad n=1,\\
            s(z)&=1+z(a(z)+b(z))-(n-1)(a(z)-d(z)),\quad n \geq
            2,%\nonumber
\end{align}
and
$a(z)=\frac{1}{\sqrt{-z}\sqrt{2-z}}$ for $n=1$.\\
{(\bf Case $n=1$)} From $a(z)-b(z)=\frac{1}{n}+\frac{z}{n}a(z)$ we
see that $ b(z)=a(z)(1-z)-1 $.
Employing \eqref{s=} and $a(z)=\frac{1}{\sqrt{-z}\sqrt{2-z}}$,   we have %earn
\begin{align}%
a(z)s(z)=a(z)+z(a^2(z)+a^2(z)(1-z)-a(z))= a(z)(1-z)-1=b(z),\, z\leq
0.
\end{align}
{\bf (Case $n\geq 2$)}
 By %Adopting
%the first equality of \eqref{s=}, i.e.
\begin{align*}%\label{s=a }
   \frac{1}{2\pi}  \int_{\mathbb{T} } \frac{\cos
   p}{E(p)-z}dp =
     \frac{1}{(2\pi)^{2}}
 \left(     \int_{\mathbb{T} } \frac{\sin^2 p}{E(p)-z}dp
     \right)
      \left(     \int_{\mathbb{T} } \frac{1}{E(p)-z}dp
     \right),
\end{align*}
 we
obtain
\begin{align*}%\label{s=a }
   b(z)
   =   \frac{1}{(2\pi)^{n+1}}
\int_{\mathbb{T}^{n-1}}%%
 \left(     \int_{\mathbb{T} } \frac{\sin^2 p_1}{E(p)-z}dp _1
     \right)
      \left(     \int_{\mathbb{T} } \frac{1}{E(p)-z}dp_1
     \right)%%
   dp _2\dots dp _n,
\end{align*}
which provides% us permission to write
 \begin{align*}%\label{s=a }
   b(z)=\frac{1}{(2\pi)^{2n}} \int_{\mathbb{T}^{n-1}\times \mathbb{T}^{n-1}}
   F(\tilde{p})G(\tilde{q})d\tilde{p}d\tilde{q},
\end{align*}
where $\tilde{p}=(p_2,\dots,p_n)$, $\tilde{q}=(p_2,\dots,p_n)$, $
F(\tilde{p})=    \int_{\mathbb{T} } \frac{\sin^2
   p_1}{E(p_1,\tilde{p})-z}dp _1$ and
   $ G(\tilde{q})=    \int_{\mathbb{T} }
   \frac{1}{E(p_1,\tilde{q})-z}dp _1$.
Then
%So
\begin{align*}%\label{s=a }
  a(z) s(z)=\frac{1}{(2\pi)^{2n}}\int_{\mathbb{T}^{n-1}\times \mathbb{T}^{n-1}}
   F(\tilde{p})G(\tilde{q})d\tilde{p}d\tilde{q},
\end{align*}
and we can have the relations: %gain the relation
\begin{align}%\label{q}
 & a(z) s(z)-b(z)=-\frac{1}{2(2\pi)^{2n}} \int_{\mathbb{T}^{n-1}\times \mathbb{T}^{n-1}}
 \!\!\!
 \!\!\!
 \!\!\!
 \!\!\!
   \big(F(\tilde{p})-F(\tilde{q})\big)    \big(G(\tilde{p})-G(\tilde{q})\big) d\tilde{p}d\tilde{q},\nonumber \\
   \nonumber &
   \big(F(\tilde{p})-F(\tilde{q})\big)
     \big(G(\tilde{p})-G(\tilde{q})\big)\\
     &\label{(f-f)(g-g)}
=\left(\sum_{j=2}^{n}(\cos p_j-\cos q_j)\right)^2
 \!\!\!
\int_{\mathbb{T} }
\frac{\sin^2p_1dp_1}{(E(p_1,\tilde{p})-z)(E(p_1,\tilde{q})-z)}
 \!\!\!\int_{\mathbb{T} }\frac{dp_1}{(E(p_1,\tilde{p})-z)(E(p_1,\tilde{q})-z)}
   \end{align}
prove $ a(z) s(z)<b(z)$ for $n\geq 2$. Furthermore
\eqref{(f-f)(g-g)} shows that the last inequality leaves its sign
invariant even for $z=0$ and $n\geq 3$. Now we prove \eqref{s=1}. By
\begin{align*}
    &c(0)-d(0)= \frac{1}{2} \frac{1}{(2\pi)^{n}} \int_{\mathbb{T}^n} \frac{(\cos p_1
- \cos p_2)^2  }{E(p)}dp \\%%
    &s(0)= \frac{1}{2} \frac{1}{(2\pi)^{n}} \int_{\mathbb{T}^n} \frac{(\sin p_1
- \sin p_2)^2 }{E(p)} dp,%%
\end{align*}
we describe
\begin{align*}
    c(0)-d(0)= &\frac{1}{2} \frac{1}{(2\pi)^{n}}
    \int_{\mathbb{T}^n} \frac{4 \sin^2\frac{p_1-p_2}{2}
  \sin^2\frac{p_1+p_2}{2}
 }{E(p)}dp\\%%
    s(0)=&\frac{1}{2} \frac{1}{(2\pi)^{n}} \int_{\mathbb{T}^n} \frac{
4\sin^2\frac{p_1-p_2}{2}
  \cos^2\frac{p_1+p_2}{2}}{E(p)}dp.
\end{align*}
Introducing new variables $u=(p_1-p_2)/2$ and $t=(p_1+p_2)/2$ we get
\begin{align}\label{c-d-s}
    c(0)-d(0)-s(0)
=- 2 \frac{1}{(2\pi)^{n}} \int_{\mathbb{T}^{n-2}}dp_3\dots dp_{n},
\int_{\mathbb{T} }
 4
\sin^2 u  du \int_{\mathbb{T} }\frac{ \cos 2t}{ A-2\cos t \cos u
}dt,
\end{align}
where $ A=2+\sum_{j=3}^n(1-\cos p_j) $ is a function being
independent of both $t$ and $u$. We have
\begin{align*}
    &\int_{\mathbb{T} }\frac{ \cos 2t}{ A-2\cos t
\cos u }dt =
%    2\int_{0}^{\pi}\frac{ \cos 2t}{ A-2\cos t\cos u }dt \\
%%
%%&= 2\int_{0}^{\pi/2}\cos 2t \Big (\frac{1 }{ A-2\cos t \cos u}+\frac{1 }{ A+2\cos t \cos u } \Big )dt = 4A\int_{0}^{\pi/2}\cos 2t\frac{1 }{ A^2-(2\cos t \cos u)^2 } dt \\
%%
%% &=4A \int_{0}^{\pi/4}\cos 2t \Big ( \frac{1 }{ A^2-(2\cos t\cos u)^2 }-\frac{1 }{ A^2-(2\sin t \cos u)^2 } \Big) dt \\
%%
%%
16 A \mathrm{c}\int_{0}^{\pi/4}
  \frac{ \cos^2 2t  \cos^2 u
 }{ (A^2-(2\cos t \cos u)^2)( A^2-(2\sin t \cos u)^2)
}
 dt >0.
\end{align*}
Using the last inequality  to \eqref{c-d-s} we get \eqref{s=1}.
\end{proof}

%\appendix
 \section{Proof of Lemma \ref{lem asymp}}%%
\label{app lem assymp}
\begin{proof}  First we prove
%that  between the functions $a(z),b(z)$  and their
%derivatives there is an inequality
\begin{align}\label{ab  less ab deri}
a'(z)b(z)-a(z)b'(z)<0,\quad
 z\in
(-\infty,0),
\end{align}
which proves the monotone decreasing of $\frac{a(z)}{b(z)}$. The
equality
\begin{align*}
    a'(z)b(z)-a(z)b'(z)=    \frac{1}{(2\pi)^{2n}} \int_{\mathbb{T}^n\times \mathbb{T}^n}
\cos p_1 \frac{E(p)-E(q)}{(E(p)-z)^2 (E(q)-z)^2 } dp  dq
\end{align*}
gives %and invariance of $E(p)=E(p_1,\dots,p_n)$ give
\begin{align*}
    a'(z)b(z)-a(z)b'(z)=    \frac{1}{n(2\pi)^{2n}} \sum_{j=1}^n\int_{\mathbb{T}^n\times \mathbb{T}^n}
\cos p_j \frac{E(p)-E(q)}{(E(p)-z)^2 (E(q)-z)^2 } dp  dq.
\end{align*}
Changing  variables $ E(p)-E(q)=\sum_{j=1}^n (\cos p_j-\cos q_j) $
provides the inequality
\begin{align*}
    a'(z)b(z)-a(z)b'(z)=   - \frac{1}{n(2\pi)^{2n}} \int_{\mathbb{T}^n\times \mathbb{T}^n}
\frac{\big(\sum_{j=1}^n(\cos p_j-\cos q_j)\big)^2 }{(E(p)-z)^2
(E(q)-z)^2 } dp  dq <0
\end{align*}
which proves  $\left(\frac{a(z)}{b(z)}\right)^{'}<0$ in
$(-\infty,0)$. Using  the definition of $a(z)$, we achieve $
a(z)=O(\frac{1}{|z|})$ and $b(z)=O(\frac{1}{z^2})$ as $z\to -\infty
$, and hence $ {a(z)}/{b(z)}=O(|z|)$ as $z\to -\infty$ proves
\eqref{lim 1}. Let $n=1,2$. By virtue of Lemma \ref{alpha - b} we
may write
$$
\frac{a(z)}{b(z)}=\frac{1}{1-\frac{b(z)-a(z)}{a(z)}} =
\frac{1}{1-\frac{1}{a(z)n}-\frac{z}{n}},
$$
and since $a(z)=O(\frac{1}{z})$ as $z\to 0-$ we receive \eqref{lim
2}. In case $n\geq 3$, the limit \eqref{lim 2} is obvious.
\end{proof}
%\appendix

\section*{Acknowledgement(s)}
%{\bf Acknowledgments}

 FH is financially supported by Grant-in-Aid
for Science Research (B)16H03942 and Challenging Exploratory
Research 15K13445 from JSPS. We thank Kota Ujino for the careful
reading of the manuscript and Tomoko Eto for drawing Figures
\ref{f3}-\ref{f6}.


\begin{thebibliography}{99}

\bibitem{ALMM06} {Albeverio~S.,  Lakaev~S.N., Makarov~K.A.,
Muminov~Z.I.} { The Threshold Effects for the Two-particle
Hamiltonians on Lattices,}
 Comm.Math.Phys.  2006;{262}:91--115.

\bibitem{BS12}  Bellissard~J. and  Schulz-Baldes~H. Scattering theory for
lattice operators in dimension $d\geq  3$, arXiv:1109.5459v2, 2012.

\bibitem{BCFK06}  Berkolaiko~G.,  Carlson~R.,  Fulling~S.A. and  Kuchment~P.A.
 Quantum Graphs and Their Applications, Contemp. Math. AMS, Providence; 2006. (vol. 415).
(2006)

\bibitem{BK12}  Berkolaiko~G. and  Kuchment~P.A.: Introduction to Quantum
Graphs, AMS Mathematical Surveys and Monographs;  2012. (vol. 186).

\bibitem{C97}  Chung~F.: Spectral Graph Theory, CBMS Regional Conf. Series
Math., Washington DC; 1997.

\bibitem{EKKST08}  Exner~P.,  Keating~J.P.,  Kuchment~,  Sunada~T. and
Teplyaev~A. (eds.): Analysis on Graphs and Its Applications, Proc.
Symp. Pure Math.,  AMS Providence; 2008. (vol. 77).

\bibitem{EKW10}  Exner~P.,  Kuchment~P.A. and  Winn~B.: On the location of
spectral edges in Z-peridoc media, J. Phys.A.  2010;43: 474022.

\bibitem{G01}  Grigor'yan~A.: Heat kernels on manifolds, graphs and fractals.
 European Congress of Mathematics, Barcelona; 2000 July~10--14;
Progress in Mathematics, BirkhAauser. 2001;201:393--406.

\bibitem{FIC} Faria da Veiga~P.A., Ioriatti~L., and O'Carroll~M.:
Energy-momentum spectrum of some two-particle lattice Schr\"odinger
Hamiltonians.
 Phys. Rev. E. 2002;66:(3):016130.

\bibitem{HSSS12} Hiroshima~F., Sasaki~I., Shirai~T. and Suzuki~A.: Note on
the spectrum of discrete SchrAodinger operators, J.
Math-for-Industry.  2012;4:105--108.

\bibitem{KS13}  Korotyaev~E. and  Saburova~N.: Schr\"odinger operators on
periodic discrete graphs, arXiv: 1307. 1841 (2013)

\bibitem{LB09} Lakaev~S.N., Bozorov~I.N. The number of bound states of one
particle Hamiltonian on a three-dimensional lattice. Theoretical and
Mathematical physics. 2009;{158}:360--376.

\bibitem{LakTil}  { Lakaev~S.N., Tilovova~Sh.M.}
Merging of eigenvalues and resonances of a two-particle Schr\"dinger
operator. Theoretical and Mathematical Physics. 1994;101:2:13201331.

\bibitem{N77}  Newton~R.G. Nonlocal interactions. The generalized
Levinson theorem and the structure of the spectrum. J. Math. Phys.
1977;18:13481357.

\bibitem{P12}  Post~O. Spectral Analysis on Graph-Like Spaces, Lecture Notes
in Mathematics 2039, Springer; 2012.

\bibitem{S} Simon~B.
 Large time behavior of the $L^p$ norm of Schr\"odinger
Semigroups.
 J. Funct. Anal. 1981;{40}:66--83.

\bibitem{Tam2}  Tamura~H.
The Efimov effect of three-body Schr\"{\rm o}dinger operators:
Asymptotics for the number of negative eigenvalues.
 Nagoya Math.
J. 1993;{130}:55--83.



\end{thebibliography}
\end{document}